\def\@settitle{\begin{center}%
		\baselineskip14\p@\relax
		\normalfont\LARGE\scshape\bfseries
		%\uppercasenonmath\@title
		\@title
	\end{center}%
}
\def\@setauthors{%
  \begingroup
  \def\thanks{\protect\thanks@warning}%
  \trivlist
  \centering\small \@topsep30\p@\relax
  \advance\@topsep by -\baselineskip
  \item\relax
  \author@andify\authors
  \def\\{\protect\linebreak}%
%  \MakeUppercase{\authors}%
  \authors%
  \ifx\@empty\contribs
  \else
    ,\penalty-3 \space \@setcontribs
    \@closetoccontribs
  \fi
  \endtrivlist
  \endgroup
}
\def\subsection{\@startsection{subsection}{2}%
	\z@{.5\linespacing\@plus.7\linespacing}{.5\linespacing}%
	{\normalfont\large\bfseries}}
\def\subsubsection{\@startsection{subsubsection}{3}%
	\z@{.5\linespacing\@plus.7\linespacing}{.5\linespacing}%
	{\normalfont\itshape}}
\definecolor{darkblue}{rgb}{0.0, 0.0, 0.45}
\date{\today}
\DeclareMathOperator*{\argmax}{argmax}
\DeclareMathOperator*{\argmin}{argmin}
\newcommand{\norm}[1]{\|#1\|}
\newcommand{\inner}[2]{\langle #1,#2 \rangle}
\newtheorem{theorem}{Theorem}[section]
\newtheorem{definition}[theorem]{Definition}
\newtheorem{lemma}[theorem]{Lemma}
\newtheorem{assumption}[theorem]{Assumption}
\newtheorem{remark}[theorem]{Remark}
\newtheorem{corollary}[theorem]{Corollary}
\newtheorem{example}[theorem]{Example}
\newtheorem{proposition}[theorem]{Proposition}
\newcolumntype{?}{!{\vrule width 1.5pt}}
\title[Learning in Inverse Optimization]{Learning in Inverse Optimization: Incenter Cost, Augmented Suboptimality Loss, and Algorithms}
\author{Pedro Zattoni Scroccaro, Bilge Atasoy and Peyman Mohajerin Esfahani}%
\thanks{The authors are with the Delft Center for Systems and Control ({\tt\small \{P.ZattoniScroccaro, P.MohajerinEsfahani\}@tudelft.nl}) and the Department of Maritime and Transport Technology ({\tt\small B.Atasoy@tudelft.nl}), Delft University of Technology, Delft, The Netherlands. This work is partially supported by the ERC grant TRUST-949796.}
\begin{document}
\maketitle

\begin{abstract}
In Inverse Optimization (IO), an expert agent solves an optimization problem parametric in an exogenous signal. From a learning perspective, the goal is to learn the expert's cost function given a dataset of signals and corresponding optimal actions. Motivated by the geometry of the IO set of consistent cost vectors, we introduce the ``incenter'' concept, a new notion akin to circumcenter recently proposed by Besbes et al. (2023). Discussing the geometric and robustness interpretation of the incenter cost vector, we develop corresponding tractable convex reformulations, which are in contrast with the circumcenter, which we show is equivalent to an intractable optimization program. We further propose a novel loss function called \textit{Augmented Suboptimality Loss} (ASL), a relaxation of the incenter concept for problems with inconsistent data. Exploiting the structure of the ASL, we propose a novel first-order algorithm, which we name \textit{Stochastic Approximate Mirror Descent}. This algorithm combines stochastic and approximate subgradient evaluations, together with mirror descent update steps, which is provably efficient for the IO problems with discrete feasible sets with high cardinality. We implement the IO approaches developed in this paper as a Python package called InvOpt. Our numerical experiments are reproducible, and the underlying source code is available as examples in the InvOpt package.
\end{abstract}

%===============================================================================

\section{Introduction}\label{sec:introduction}

In Inverse Optimization (IO) problems, our goal is to model the behavior of an expert agent, which given an exogenous signal, returns a response action. The underlying assumption of IO is that to compute its response, the expert agent solves an optimization problem parametric in the exogenous signal. We assume to know the constraints imposed on the expert, but not its cost function. Therefore, our goal is to model the cost function being optimized by the expert, using examples of exogenous signals and corresponding expert response actions. For example, consider the problem of learning how to route vehicles using examples from experienced drivers. Given a set of customer demands (exogenous signal), the experienced driver chooses a certain vehicle route to serve the customers (expert response). Assuming the drivers solve some kind of route optimization problem to compute their routes, where their cost function depends on the drivers' evaluation of how costly it is to drive from one customer to another, one could look at this problem as an IO problem. Thus, one could use IO tools to learn the cost function being used by these drivers, i.e., we learn the preferences of the expert drivers when driving from a certain customer to another. IO tools have also been used in many other application areas, for instance, healthcare problems \citep{chan2014generalized, chan2022inverse}, modeling of consumer behavior \citep{bertsimas2015data}, transportation problems \citep{burton1992instance, chow2012inverse}, portfolio selection \citep{bertsimas2012inverse}, network design \citep{farago2003inverse}, forecast of electricity prices \citep{saez2017short} and control systems \citep{akhtar2021learning}. For more examples of applications of IO, we refer the reader to the recent review paper \citep{chan2021inverse} and references therein.

The literature on IO can be roughly divided into so-called \textit{classical} and \textit{data-driven} IO. In classical IO, the goal is usually to find a cost function that renders a \textit{single} signal-response pair optimal, that is, a cost function under which the observed response is optimal. A direct approach to modeling this scenario leads to a bi-level optimization problem, thus, much of the early IO literature focuses on reformulating this problem into a single-level tractable program \citep{ahuja2001inverse}. This idea has been extended to different classes of optimization problems, such as conic and integer programs \citep{heuberger2004inverse, iyengar2005inverse, ahmed2005inverse, wang2009cutting, schaefer2009inverse}. On the other hand, data-driven IO usually deals with problems with multiple signal-response examples, and it is not necessarily assumed that there exists a cost function consistent with all signal-response data. In this scenario, the IO problem can be viewed as a supervised learning problem with multivariate output where the IO model forms a hypothesis class. In this view, one minimizes a training loss function to find a good "fit" to the input (response), output (optimizer) data. With this in mind, much of the data-driven IO literature focuses on the choice of the training loss function, particularly because the usual supervised learning losses lead to nonconvex programs \citep{aswani2018inverse}. Examples of such losses are the \textit{KKT loss} \citep{keshavarz2011imputing}, \textit{first-order loss} \citep{bertsimas2015data}, \textit{predictability loss} \citep{aswani2018inverse}, and the \textit{suboptimality loss} \citep{esfahani2018data}. It is worth noting that in the standard IO setting, directly learning the cost function (i.e., regression with signal-pair as the input and the cost function value as the scaler output) is typically not an option since the available data contains only the signal-response pair and not the value of the unknown cost function. Recently, in a framework called ``predict, then optimize'', \cite{elmachtoub2022smart} consider this additional information, and proposes another loss function coined as SPO. Other concepts have also been investigated in the context of IO problems, such as goodness-of-fit \citep{chan2019inverse}, robustness against misspecification \citep{ghobadi2018robust}, and learning constraints instead of cost functions \citep{ghobadi2021inferring}.

Recently, \cite{besbes2023contextual} show that given a dataset of expert examples and the set $\mathbb{C}$, which is defined as the set of cost vectors consistent with the dataset, the optimal solution to the IO problem, for a certain regret performance measure, is the so-called \textit{circumcenter} of the set $\mathbb{C}$. To the best of our knowledge, this is the first minimax regret result for IO problems. To derive this result, the authors exploit the geometry of IO problems and provide strong regret guarantees. The general optimization program associated with computing the circumcenter vector, however, turns out to contain intractable problem instances. Moreover, when the IO dataset is inconsistent with a single cost function, it is not clear how one could generalize the circumcenter concept to solve the IO problem. These shortcomings motivate us to propose a new concept called ``incenter" to select an IO cost vector from the set of consistent costs. In light of this, the main contributions of this work are summarized as follows:
\begin{enumerate}[(i)]
    \item \textbf{(Geometric \& robustness interpretations)} Motivated by the geometry of the set of consistent cost vectors, we introduce the concept of an incenter (Definition \ref{def:incenter}), and provide insights into its geometry (Figure \ref{fig:centers}) and robustness (Remark \ref{remark:robustness}) in comparison with the \textit{circumcenter} concept from \citep{besbes2023contextual}.

    \item \textbf{(Convex reformulations \& tractability)} We develop tractable convex reformulations of the incenter (Theorem \ref{theo:incenter_reformulation}, Corollary \ref{coro:incenter_convex_reformulation}), along with a geometric interpretation of these characterizations (Figure~\ref{fig:alternative_incenter}). This may be of particular interest as the corresponding circumcenter concept is equivalent to an intractable optimization program (Theorem~\ref{theo:np_hard_circumcenter}). To establish this intractability result, we draw connections between the circumcenter/incenter concepts and the well-known problem of extremal volume balls (Remark~\ref{remark:extremal_volume}). Moreover, since the problem of extremal volume balls is a special case of the problem of extremal volume ellipsoids, this connection allows us to derive ``ellipsoidal'' versions of the incenter and circumcenter concepts (e.g., Eq. \eqref{eq:ellipsoidal_incenter}), which performs well in our numerical experiments (Section \ref{sec:consistent_out}).
    
    \item \textbf{(Generalization to inconsistent data \& Augmented Suboptimality loss)} Generalizing to the inconsistent data setting, we propose a novel loss function for IO problems, which we name \textit{Augmented Suboptimality Loss} (\ref{eq:ASL}) (Definition \ref{def:ASL}). This loss can be interpreted as a \textit{relaxation} of the incenter concept, to handle IO problems with inconsistent data (Eq. \eqref{eq:incenter_gen_inconsistent}). In special cases, this formulation of the IO problem can be shown to be equivalent to the so-called Structured SVM formulation of structured prediction problems, revealing a connection between IO and structure prediction (Remark \ref{remark:connections_structured}). We further propose a general convex reformulation of the ASL for IO problems with mixed-integer feasible sets (Theorem \ref{theo:MI_reformulation} and Corollary \ref{coro:MI_reformulation_linear}), which generalizes several reformulations from the literature (Remark \ref{remark:generality}).
    
    \item \textbf{(Tailored first-order algorithm: Stochastic Approximate Mirror Decent)} Motivated by the structure of IO loss functions, we propose a novel first-order algorithm, which we name \textit{Stochastic Approximate Mirror Descent} (\ref{alg:SAMD}). This algorithm exploits the finite sum structure of the IO problem to compute stochastic gradients, uses approximate evaluating IO loss functions to compute approximate subgradients, and uses mirror descent update steps for problems with favorable geometry (Sections \ref{sec:mirror}, \ref{sec:stochastic_subgradients}, and \ref{sec:approximate_subgradients}). We prove convergence rates for this algorithm (Proposition \ref{prop:samd_rate}) and show how the components of this algorithm can be tailored to exploit the structure of IO loss functions (Algorithm \ref{alg:IO_SAMD}).
    
    \item \textbf{(Inverse Optimization Python package)}
    We implement the IO approaches developed in this paper as a Python package called InvOpt \citep{zattoniscroccaro2023invopt}. This is a Python package developed to solve general IO problems. Our numerical experiments are reproducible, and the underlying source code is available as examples in the InvOpt package.
\end{enumerate}

The rest of the paper is organized as follows. In Section \ref{sec:problem} we formalize the IO problem. In Section \ref{sec:center} we present approaches based on the incenter concept. In Section \ref{sec:ASL}, we define the Augmented Suboptimality Loss and show how it can be used to tackle IO problems with inconsistent data and mixed-integer feasible sets. In Section \ref{sec:first-order}, we present first-order algorithms tailored to IO problems. In Section \ref{sec:numerical} we report our numerical results. In Section \ref{sec:conclusion}, we conclude the paper and discuss future research directions. In Appendix \ref{app:theoretical} we discuss the theoretical properties of the ASL, in Appendix \ref{app:continuous} we present concise reformulations for some special IO problems with continuous feasible sets, in Appendix \ref{app:proofs} we present proofs, and in Appendix \ref{app:further_numerical} we present further numerical results.

\textbf{Notation.} The trace, range space, and Moore-Penrose inverse of a matrix $Q \in \mathbb{R}^{m \times m}$ are denoted as $\text{Tr}(Q)$, $\mathcal{R}(Q)$ and $Q^\dagger$, respectively. For two symmetric matrices $Q,R \in \mathbb{R}^{m \times m}$, $Q \succcurlyeq R$ means $Q-R$ is positive semidefinite. The Euclidean inner product between two vectors $x,y \in \mathbb{R}^m$ is denoted by $\inner{x}{y}$. The set of vectors with nonnegative components is denoted as $\mathbb{R}^n_+ \coloneqq \{x \in \mathbb{R}^n : x \geq 0\}$. The cardinality, complement, interior, and convex hull of a set $A$ are denoted as $|A|$, $\overline{A}$, $\text{int}(A)$ and $\text{conv}(A)$, respectively. The set of integers from $1$ to $N$ is denoted as $[N]$. A set of indexed values is compactly denoted by $\{x_i\}_{i=1}^N \coloneqq \{x_1,\ldots,x_N\}$. The euclidean projection of $x$ onto a set $A$ is defined as $\Pi_A(x) \coloneqq \argmin_{y \in A}\|y - x\|_2$. The euclidean angle between two nonzero vectors $\theta$ and $\tilde{\theta}$ is defined as $a(\theta, \tilde{\theta}) \coloneqq \text{arccos} \inner{\theta}{\tilde{\theta}}/(\|\theta\|_2\|\tilde{\theta}\|_2)$. For vectors $x,y \in \mathbb{R}^m$, $x \odot y$ and $\exp(x)$ mean element-wise multiplication and element-wise exponentiation, respectively. Moreover, whenever the arguments of $\exp(\cdot)$ or $\log(\cdot)$ are matrices, they should be interpreted as the usual matrix exponentiation and matrix logarithm, respectively. The vector $\text{vec}(Q) \in \mathbb{R}^{pq}$ denotes the vectorization of the matrix $Q \in \mathbb{R}^{p\times q}$ formed by stacking the columns of $Q$ into a single column vector. The symbol $\otimes$ denotes the Kronecker product. The vector $e_j \in \mathbb{R}^n$ denotes the vector of zeros except for the $j$'th element which is equal to one. The vector of ones is represented by $\mathbbm{1}$.

%===============================================================================

\section{Problem Description}
\label{sec:problem}

Let us begin by formalizing the IO problem. Let $s \in \mathbb{S}$ be an exogenous signal, and $\mathbb{S}$ be the signal space. The expert agent is assumed to solve the parametric \textit{Forward Optimization Problem}
\begin{equation}
\label{eq:expert_problem}
    \min_{x \in \mathbb{X}(s)} \ F(s,x),
\end{equation}
where $\mathbb{X}(s)$ is the feasible set, and $F: \mathbb{S} \times \mathbb{X} \to \mathbb{R}$ is the expert's cost function, where we define $\mathbb{X} \coloneqq \bigcup_{s \in \mathbb{S}} \mathbb{X}(s)$.  Recall that the cost $F$ is unknown to us, and we only have access to a dataset of $N$ pairs of exogenous signals and respective expert optimal decisions $\widehat{\mathcal{D}} \coloneqq \{(\hat{s}_i, \hat{x}_i)\}_{i=1}^N$, that is,
\begin{equation*}
    \hat{x}_i \in \argmin_{x \in \mathbb{X}(\hat{s}_i)} \ F(\hat{s}_i,x), \quad \forall i \in [N].
\end{equation*}
We use the notation ``$\hat{\cdot}$'' to indicate signal-response data. Using this data, our goal is to learn a cost function that when minimized, reproduces the expert's decisions as well as possible. 

Naturally, we can only search for functions in a restricted function space. In this work, we consider the following parametric \textit{hypothesis space}:
\begin{equation}
\label{eq:hypothesis_class}
    \mathcal{F}_\phi \coloneqq \big\{ \inner{\theta}{\phi(\cdot,\cdot)} : \theta \in \mathbb{R}^p \big\},
\end{equation}
where $\theta \in \mathbb{R}^p$ is called the \textit{cost vector} and $\phi: \mathbb{S} \times \mathbb{X} \to \mathbb{R}^p$ is called \textit{feature mapping}, which maps a signal-response pair $(s,x)$ to a feature vector $\phi(s,x) \in \mathbb{R}^p$. In practice, choosing a suitable mapping $\phi$ is part of the modeling of the IO problem. To exemplify the generality of this hypothesis space, the next example shows how to model a quadratic function as a member of $\mathcal{F}_\phi$. Other utility functions from the literature, for instance, the CES function and the Cobb-Douglas function can also be easily fitted into our framework \cite[Appendix A]{chen2020online}.

\begin{example}[Quadratic hypothesis]
\label{ex:hypothesis_example}
Consider a quadratic function
$$
F(s,x) = \inner{x}{Q_{xx}x} + \inner{x}{Q_{xs}s} + \inner{x}{q},
$$
parametrized by $(Q_{xx},Q_{xs},q) \in \mathbb{R}^{n \times n} \times \mathbb{R}^{n \times m} \times \mathbb{R}^n$. Using the identity $\emph{vec}(AXB) = \inner{\emph{vec}(X)}{B \otimes A^\top}$, we can rewrite this quadratic function as
$$
F(s,x) = \inner{\emph{vec}(Q_{xx})}{x \otimes x} + \inner{\emph{vec}(Q_{xs})}{s \otimes x} + \inner{x}{q} = \inner{\theta}{\phi(s,x)},
$$
where
$$
\theta \coloneqq 
\begin{bmatrix}
\emph{vec}(Q_{xx}) \\
\emph{vec}(Q_{xs}) \\
q
\end{bmatrix}
\quad \text{and} \quad
\phi(s,x) \coloneqq
\begin{bmatrix}
x \otimes x \\
s \otimes x \\
x
\end{bmatrix}.
$$
\end{example}
Thus, our IO goal is to find a cost vector $\theta$ such that when solving the optimization problem
\begin{equation} \label{eq:IO_problem}
    \min_{x \in \mathbb{X}(s)} \ \inner{\theta}{\phi(s,x)},
\end{equation}
we can reproduce (or in some sense approximate) the action the expert would have taken, given the same signal $s$. An important object in IO problems is $\mathbb{C}$, the set of cost vectors consistent with the data $\widehat{\mathcal{D}}$.
\begin{definition}[Consistent cost vectors]
\label{def:consistent}
Given a dataset $\widehat{\mathcal{D}} = \{(\hat{s}_i, \hat{x}_i)\}_{i=1}^N$, feature mapping $\phi$ and feasible set $\mathbb{X}$, the \emph{set of consistent cost vectors} is defined as
\begin{equation}
    \label{eq:consistent}
    \mathbb{C} \coloneqq \big\{\theta \in \mathbb{R}^p : \inner{\theta}{\phi(\hat{s}_i,\hat{x}_i) - \phi(\hat{s}_i,x_i)} \leq 0, \ \forall x_i \in \mathbb{X}(\hat{s}_i), \ \forall i \in [N] \big\}.
\end{equation}
\end{definition}
In other words, $\mathbb{C}$ corresponds to the set of cost vectors $\theta$ that render the expert responses $\hat{x}_i$'s optimal, i.e., $\hat{x}_i \in \argmin_{x_i \in \mathbb{X}(\hat{s}_i)} \inner{\theta}{\phi(\hat{s}_i,x_i)}$, for all $i \in [N]$. Geometrically, the set $\mathbb{C}$ is a cone, defined by the intersection of half-spaces defined by hyperplanes that pass through the origin.

%===============================================================================

\section{Incenter Cost Vector}
\label{sec:center}

From Definition \eqref{def:consistent}, one can observe that a trivial element in the set $\mathbb{C}$ is $\theta = 0$. However, this trivial choice is not really of interest since it yields any response $x \in \mathbb{X}(s)$ to the signal s an optimal solution, hence no differentiation between different responses. Therefore, when $\mathbb{C}$ contains other elements than $\theta = 0$, that is, the data is consistent with the hypothesis space in the sense that the expert's true cost is $F(s,x) = \inner{\theta^\star}{\phi(s,x)}$ for some nonzero element $\theta^\star$, it is natural to restrict the search space to $\mathbb{C} \setminus \{0\}$. It is also worth noting that a similar issue concerns the feature map $\phi$. Namely, from a modeling perspective, it is important to consider a feature class where $\phi(s,x)$ is not identically zero in the second argument over the set $\mathbb{X}(s)$ for typical signal variables $s \in \mathbb{S}$. This assumption, together with excluding the trivial cost vector $\theta = 0$ from the search space, is necessary. Thus, in this section, we assume $\mathbb{C} \setminus \{0\} \neq \emptyset$ and that $\phi$ is not trivially zero, and we discuss different ways to choose a cost vector from the set $\mathbb{C}$. A straightforward way to do this is by solving a feasibility optimization problem, which we call \textit{feasibility program}:
\begin{equation}
\label{eq:feasibility}
    \begin{aligned}
    \min_{\theta} \quad & 0 \\
    \text{s.t.} \ \quad & \inner{\theta}{\phi(\hat{s}_i,\hat{x}_i) - \phi(\hat{s}_i,x_i)} \leq 0 \quad \quad \forall x_i\in\mathbb{X}(\hat{s}_i), \ \forall i \in [N] \\
    & \|\theta\| = 1.
    \end{aligned}
\end{equation}
Problem \eqref{eq:feasibility} simply translates the idea that we wish to find some nonzero $\theta \in \mathbb{C}$. Notice that the normalization constraint $\|\theta\| = 1$ excludes the trivial solution $\theta = 0$, and it is nonrestrictive in the sense that $\argmin_{x \in \mathbb{X}(s)} \inner{\theta}{\phi(s,x)} = \argmin_{x \in \mathbb{X}(s)} \inner{\alpha \theta}{\phi(s,x)}$, for any $\alpha>0$. By solving the IO problem using the feasibility program \eqref{eq:feasibility}, we implicitly suggest that any cost vector $\theta \in \mathbb{C} \setminus \{0\}$ is an equally good solution to the IO problem. However, one could argue that there are more principled ways to choose a cost vector from the set of consistent vectors $\mathbb{C}$. 

\subsection{Geometry, robustness, and tractability}

Considering two cost vectors $\theta$ and $\theta^\star$ with their corresponding responses defined as 
\begin{subequations}
\label{eq:regret}
\begin{equation}
\label{eq:regret1}
    x^\theta \in \argmin_{x \in \mathbb{X}(\hat{s})} \inner{\theta}{\phi(\hat{s},x)}  \quad \text{and} \quad x^\star \in \argmin_{x \in \mathbb{X}(\hat{s})} \inner{\theta^\star}{\phi(\hat{s},x)}, 
\end{equation}
the authors in \citep{besbes2023contextual} define the regret
\begin{equation}
    \label{eq:regret2}
    R(\theta, \theta^\star) \coloneqq \langle \theta^\star , \phi(\hat{s}, x^\theta) - \phi(\hat{s}, x^\star) \rangle.
\end{equation}
\end{subequations}
The regret~\eqref{eq:regret2} is a distance function between two cost vectors that measures the difference via the impact of their respective optimizers~$x^\theta$ and $x^\star$ evaluated through the second argument, here the ground truth~$\theta^\star$. As shown in \citep{besbes2023contextual}, the worst-case optimal cost vector $\theta$ for the regret $R(\theta, \theta^\star)$ is the so-called \textit{circumcenter} of $\mathbb{C}$, as defined in the following.
\begin{definition}[Circumcenter \citep{besbes2023contextual}]
\label{def:circumcenter}
Let $\mathbb{C}$ be a nonempty set. A \emph{circumcenter} of $\mathbb{C}$ is defined as
\begin{equation}
\label{eq:circumcenter}
    \theta^{\emph{c}} \in \argmin_{\theta \neq 0} \max_{\substack{\tilde{\theta} \in \mathbb{C} \\ \tilde{\theta} \neq 0}} \ a(\theta, \tilde{\theta}).
\end{equation}
\end{definition}
In the original definition of \cite{besbes2023contextual}, the non-zero constraints over the cost vectors are enforced via $\|\theta\|_2 = \|\tilde{\theta}\|_2 = 1$, which does not change the circumcenter since the angle in the objective is scale invariant. Geometrically, $\theta^\text{c}$ can be interpreted as a point in the axis of the revolution cone with the smallest aperture angle containing $\mathbb{C}$. Informally speaking, the circumcenter approach chooses a cost vector by looking for the vector in $\mathbb{C}$ furthest away (in terms of the angle) from the ``corners'' of $\mathbb{C}$ (more precisely, from the extreme rays of $\mathbb{C}$). However, in this study we identify the following three possible shortcomings when using the circumcenter concept to choose an IO model:
\begin{enumerate}[(i)]
    \item \textbf{Computational tractability:} Computing the circumcenter, in general, turns out to be computationally intractable.
    \item \textbf{Robustness interpretation:} While the circumcenter is, by definition, robust to an adversarial (i.e., worst-case) ground truth data-generating model, the challenge in practice is often related to noisy data.
    \item \textbf{Inconsistent data:} The circumcenter is only defined for problems with consistent data (i.e., nonempty $\mathbb{C}$) and, to the best of our knowledge, cannot be straightforwardly extended to IO problems with inconsistent data.
\end{enumerate}

In the rest of this section, we first formalize the tractability result (i), and then introduce a new notion that addresses these possible shortcomings related to circumcenter. 

\begin{theorem}[Intractability of circumcenter]
\label{theo:np_hard_circumcenter}
Assume $\mathbb{C}$ is a nonempty polyhedral cone. Then, solving 
the inner maximization of \eqref{eq:circumcenter} for any $\theta$
is equivalent to maximizing a quadratic function over a polytope, which is NP-hard.
\end{theorem}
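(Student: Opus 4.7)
The plan is to establish the equivalence in both directions: first, for any fixed $\theta$, reformulate the inner maximization as a convex quadratic maximization over a polytope; second, reduce from a canonical NP-hard problem in this class to pin down the hardness. The two main tools are the monotonicity of $\arccos$ and the scale-invariance of the angle function in both of its arguments.

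For the forward direction, since $\arccos$ is strictly decreasing, the inner maximization is equivalent to
\[
\min_{\tilde{\theta}\in\mathbb{C}\setminus\{0\}}\ \frac{\inner{\theta}{\tilde{\theta}}}{\norm{\theta}_2\,\norm{\tilde{\theta}}_2}.
\]
I would normalize $\norm{\theta}_2 = 1$ without loss of generality. In the nondegenerate case where $\mathbb{C}\setminus\{0\}$ lies strictly in the open half-space $\inner{\theta}{\cdot}>0$ (so the optimal cosine is positive), scale-invariance in $\tilde{\theta}$ allows me to impose $\inner{\theta}{\tilde{\theta}}=1$ without loss. The program then collapses to
\[
\max_{\tilde{\theta}\in\mathbb{C}\cap H_\theta}\ \norm{\tilde{\theta}}_2^2,
\]
where $H_\theta := \{\tilde{\theta}:\inner{\theta}{\tilde{\theta}}=1\}$. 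Since $\mathbb{C}$ is polyhedral, $\mathbb{C}\cap H_\theta$ is polyhedral, and the positivity assumption forces this slice to be bounded (otherwise a recession direction would drive the cosine to zero), hence a polytope. This yields the reformulation as maximization of a convex quadratic over a polytope.

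For the hardness direction, I would reduce from the NP-hard problem of maximizing $\norm{x}_2^2$ over a bounded polytope $P=\{x\in\mathbb{R}^n:Ax\leq b\}$ (a classical result in nonconvex QP). After translating so that $b>0$, I would homogenize $P$ to the polyhedral cone $\mathbb{C}:=\{(x,t)\in\mathbb{R}^{n+1}:Ax\leq bt,\ t\geq 0\}$ and set $\theta:=e_{n+1}$. A direct computation gives $\inner{\theta}{(x,t)}/\norm{(x,t)}_2 = t/\sqrt{\norm{x}_2^2+t^2}$, and after fixing $t=1$ (justified because $P$ bounded forces the recession cone $\{x:Ax\leq 0\}$ to be trivial, so $t=0$ cannot be optimal), the inner maximization returns $\arccos\!\bigl(1/\sqrt{1+\max_{x\in P}\norm{x}_2^2}\bigr)$. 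Hence any instance of the NP-hard polytope-norm problem reduces in polynomial time to an instance of the inner maximization, closing the equivalence.

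The main obstacle is the degenerate regime: if the optimal cosine is $\leq 0$ — i.e., $\mathbb{C}$ is not strictly inside $\{\inner{\theta}{\cdot}>0\}$ — then $\mathbb{C}\cap H_\theta$ is unbounded or empty and the rescaling $\inner{\theta}{\tilde{\theta}}=1$ breaks. In this regime the maximum angle is at least $\pi/2$ and is detectable by a linear feasibility program on $\mathbb{C}$, so the instance is easy; the delicate point is arguing that the hardness statement genuinely targets the positive-cosine regime that the incenter problem ultimately cares about, and that the reduction above lands inside this regime by construction. A secondary subtlety is verifying, via the recession cone of $P$, that the slice $\mathbb{C}\cap\{t=1\}$ in the reduction is indeed bounded; both checks are short but should be written out carefully rather than left implicit.
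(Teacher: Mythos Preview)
Your proposal is correct and reaches the same endpoint as the paper---reducing the inner maximization to squared-norm maximization over a polytope---but via a different normalization that is arguably cleaner. The paper normalizes by $\|\tilde{\theta}\|_2=1$, so the feasible set is the intersection of $\mathbb{C}$ with the unit sphere; to get a polyhedral feasible set it then rotates $\theta$ to $e_1$, writes $\tilde{\theta}=(x,y)$, and applies a nontrivial monotonic change of objective $(x-1)^2+\|y\|_2^2 \mapsto \|y/x\|_2^2$ (justified only after adding auxiliary constraints $-x\pm\langle e_j,y\rangle\le 0$ to force $x>0$), finally substituting $z=y/x$ to land on $\max_{z\in P}\|z\|_2^2$. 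Your affine normalization $\langle\theta,\tilde{\theta}\rangle=1$ slices the cone directly into a polytope in one step, and the objective is already $\|\tilde{\theta}\|_2^2$ with no transformation needed; the price is that you must separately dispatch the $\cos\le 0$ regime, which you do correctly via the recession-cone/feasibility argument. Your explicit homogenization reduction for the hardness direction is essentially the inverse of the paper's construction (the paper embeds a polytope by adjoining box-type constraints to ensure positivity of the first coordinate, whereas you homogenize via the standard $(x,t)$ lift with $\theta=e_{n+1}$), and both are valid polynomial-time reductions from the same NP-hard source problem.
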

\begin{definition}[Incenter]
\label{def:incenter}
Let $\mathbb{C}$ be a nonempty set. An \emph{incenter} of $\mathbb{C}$ is defined as
\begin{equation}
    \label{eq:incenter}
    \theta^{\emph{in}} \in \argmax_{\theta \neq 0} \min_{\substack{\tilde{\theta} \in \overline{\emph{int}(\mathbb{C})} \\ \tilde{\theta} \neq 0}} \ a(\theta, \tilde{\theta}).
\end{equation}
\end{definition}
We will show how the incenter problem \eqref{eq:incenter} can be reformulated as a tractable convex optimization problem (as opposed to the NP-hardness of the circumcenter), and by relaxing the reformulation, it can be straightforwardly extended to handle problems with inconsistent data. Let us first elaborate more on the intuition behind the incenter vector in Definition~\ref{def:incenter} compared to the circumcenter vector in Definition~\ref{def:circumcenter}. Geometrically, an incenter of $\mathbb{C}$ can be interpreted as a vector furthest away (in terms of the angle) from the \textit{exterior} of $\mathbb{C}$, or in other words, the vector in $\mathbb{C}$ furthers away from the boundary of $\mathbb{C}$. To provide insight into these different centers, we visualize them in a simple three-dimensional space in Figure~\ref{fig:centers}. Interestingly, these notions also have robustness interpretations, which are different from existing learning models in which the robustness is introduced explicitly via regularization or distributional robust approaches \citep{esfahani2018data}.

\begin{figure}
\centering
    \begin{subfigure}[t]{.4\linewidth}
    \centering
    \includegraphics[width = \linewidth]{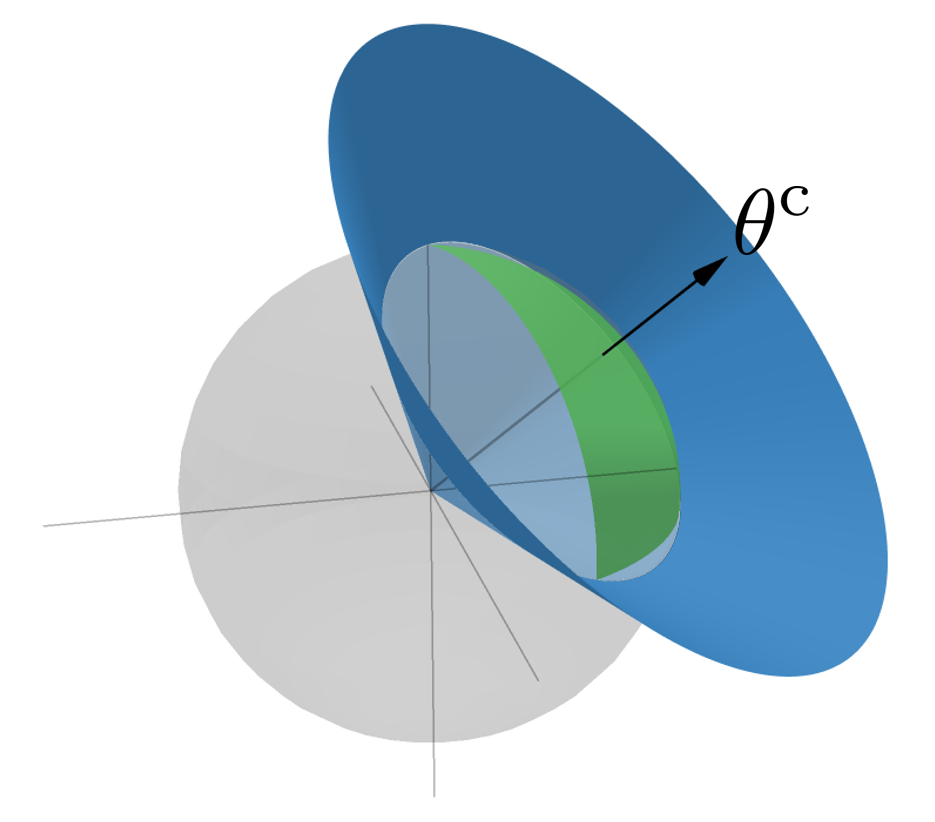}
    \caption{Circumcenter $\theta^\text{c}$ defined in \eqref{eq:circumcenter}.}
    \label{fig:circumcenter}
    \end{subfigure}
    \begin{subfigure}[t]{.4\linewidth}
    \centering
    \includegraphics[width = \linewidth]{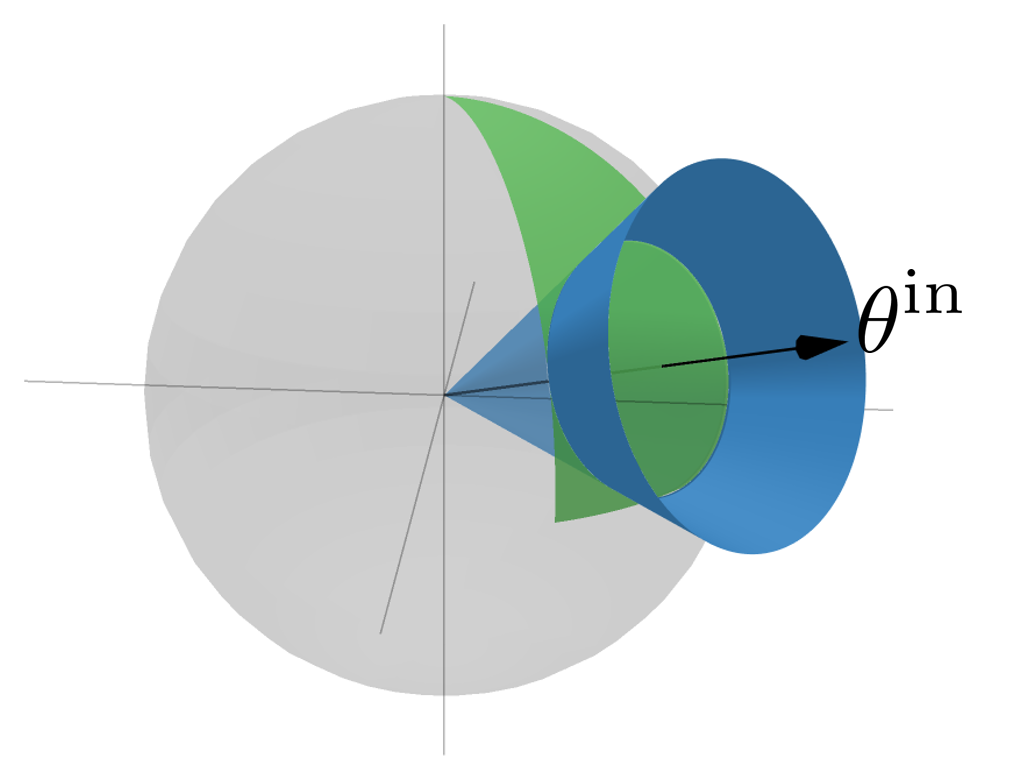}
    \caption{Incenter $\theta^\text{in}$ defined in \eqref{eq:incenter}.}
    \label{fig:incenter}
    \end{subfigure}
\caption{Geometrical visualization of the circumcenter and incenter vectors. The green regions are the intersection of $\mathbb{C}$ with the sphere and the blue cones are revolution cones with an aperture angle equal to the optimal value of \eqref{eq:circumcenter} and \eqref{eq:incenter}.}
\label{fig:centers}
\end{figure}

\begin{remark}[Robustness: circumcenter vs. incenter]
\label{remark:robustness}
A circumcenter $\theta^{\emph{c}}$ in \eqref{eq:circumcenter} can be interpreted as the vector most robust to an adversary with the authority to choose the true data-generating cost vector in $\mathbb{C}$ to be furthest away from our learned model, which is a notion captured by the regret performance measure introduced in \citep{besbes2023contextual}. On the other hand, the incenter $\theta^{\emph{in}}$ \eqref{eq:incenter} can be viewed as the vector furthest away from the boundary of $\mathbb{C}$. Since each facet of $\mathbb{C}$ is determined by a signal-response $(\hat{s}_i, \hat{x}_i)$ pair in the dataset of the IO problem, the incenter vector can be interpreted as the cost vector most robust to perturbations of the training dataset (i.e., perturbations of the facets of $\mathbb{C}$). 
\end{remark}

Let us provide some additional details supporting the robustness interpretation in Remark \ref{remark:robustness}. To formalize this robustness notion, recall the definition of the set of consistent cost vectors $\mathbb{C} = \left\{\theta \in \mathbb{R}^p : \left\langle \theta, \Delta(\hat{s}_i, \hat{x}_i, x) \right\rangle \leq 0, \ \forall x \in \mathbb{X}(\hat{s}_i), \ \forall i \in [N] \right\}$, where we define $\Delta(\hat{s}_i, \hat{x}_i, x) \coloneqq (\phi(\hat{s}_i,\hat{x}_i) - \phi(\hat{s}_i,x_i)) / \| \phi(\hat{s}_i,\hat{x}_i) - \phi(\hat{s}_i,x_i) \|_2$ if $\phi(\hat{s}_i,\hat{x}_i) \neq \phi(\hat{s}_i,x_i)$, and $\Delta(\hat{s}_i, \hat{x}_i, x) \coloneqq 0$ otherwise. Differently from the definition in Eq. \eqref{eq:consistent}, here we simply normalize the vectors $\phi(\hat{s}_i,\hat{x}_i) - \phi(\hat{s}_i,x_i)$, which does not change the set $\mathbb{C}$. Notice that the vectors $\Delta(\hat{s}_i, \hat{x}_i, x)$ are defined by the data $\{(\hat{s}_i,\hat{x}_i)\}_{i=1}^N$, and in turn define the set $\mathbb{C}$. Thus, the vector $\theta \in \mathbb{C}$ most robust to perturbations in the data, i.e., perturbation of $\Delta(\hat{s}_i, \hat{x}_i, x)$, can be found by solving the minimax problem
\begin{equation}
\label{eq:robust}
\begin{aligned}
\max_{\substack{\| \theta \|_2 = 1 \\ \theta \in \mathbb{C}}} \min_{\substack{w \in \mathbb{R}^p\\ (\hat{s}_k, \hat{x}_k) \in \{ (\hat{s}_i, \hat{x}_i) \}_{i=1}^N \\ x \in \mathbb{X}(\hat{s}_k)}} \quad & \|w\|_2^2 \\
\emph{s.t.} \ \quad & \langle \theta,  \Delta(\hat{s}_k, \hat{x}_k, x) + w \rangle = 0.
\end{aligned}  
\end{equation}
In problem \eqref{eq:robust}, the ``max player'' optimizes for the vector $\theta \in \mathbb{C}$ that requires the largest perturbation $w$ so that it lies in the hyperplane $\langle \theta,  \Delta(\hat{s}_k, \hat{x}_k, x) + w \rangle = 0$ (i.e., a perturbed facet of $\mathbb{C}$). The ``min player'' tries to find the ``most vunerable'' $\Delta_i(x)$, that is, the facet of $\mathbb{C}$ that requires the smallest perturbation vector $w$ to make $\langle \theta,  \Delta(\hat{s}_k, \hat{x}_k, x) + w \rangle = 0$. It can be shown that the optimal perturbation is $w = -\big(\langle \theta, \Delta(\hat{s}_k, \hat{x}_k, x) \rangle / \|\theta\|_2^2\big) \theta$, and substituting it in the objective function of \eqref{eq:robust}, one can show that the optimization problem \eqref{eq:robust} is equivalent to the incenter reformulation of Theorem \ref{theo:incenter_reformulation}.

As a final comment, even ignoring the computational cost of computing the circumcenter vector versus computing the incenter vector, for cases when the set of consistent vectors $\mathbb{C}$ is not empty, the incenter approach may perform better in practice (e.g., see the results in Section \ref{sec:consistent_out}). In light of the discussion in Remark \ref{remark:robustness}, this suggests that approaches robust to perturbations in the data (i.e., incenter) might be preferred to the one that is robust to an adversarial true data-generating model (i.e., circumcenter).

\subsection{Reformulations}
\label{sec:reformulations}

Next, we present a reformulation of \eqref{eq:incenter}. This reformulation will be used to derive tractable incenter-based approaches to IO problems, and we show that if $\text{int}(\mathbb{C}) \neq \emptyset$, one can compute the incenter vector by solving an optimization problem without a norm equality constraint.

\begin{theorem}[Incenter reformulation]
\label{theo:incenter_reformulation}
Problem \eqref{eq:incenter} can be reformulated as
\begin{equation}
    \label{eq:incenter_reformulation}
    \begin{aligned}
        (\theta^{\emph{in}}, r^{\emph{in}}) \in \arg\max_{\theta, r} \ & r \\
        \emph{s.t.} \ & \inner{\theta}{\phi(\hat{s}_i,\hat{x}_i) - \phi(\hat{s}_i,x_i)} + r\| \phi(\hat{s}_i,x_i) - \phi(\hat{s}_i,\hat{x}_i) \|_2 \leq 0 \ \forall x_i \in \mathbb{X}(\hat{s}_i), \forall i \in [N] \\
        &\|\theta\|_2 = 1.
    \end{aligned}
\end{equation}
\end{theorem}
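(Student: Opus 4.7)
The plan is to identify the incenter, by definition the unit vector maximizing the minimum angle to the complement $\overline{\text{int}(\mathbb{C})}$, with the axis of the largest revolution cone inscribed in $\mathbb{C}$, and then translate this geometric statement into the algebraic inequalities of \eqref{eq:incenter_reformulation} using the facet normals of $\mathbb{C}$. First I would restrict attention to unit vectors $\theta \in \text{int}(\mathbb{C})$, which is without loss of generality: for any $\theta \notin \text{int}(\mathbb{C})$ the inner minimum in \eqref{eq:incenter} vanishes and so such a $\theta$ cannot be optimal when the interior is nonempty, while $a(\theta, \tilde\theta)$ is scale-invariant. Write $d_i(x_i) \coloneqq \phi(\hat{s}_i, \hat{x}_i) - \phi(\hat{s}_i, x_i)$ for the outward facet normals defining $\mathbb{C}$, and for those with $d_i(x_i) \neq 0$ let $\alpha_i(x_i) \coloneqq a(\theta, d_i(x_i)) \in [\pi/2, \pi]$, so that $\cos \alpha_i(x_i) = \langle \theta, d_i(x_i)\rangle/\|d_i(x_i)\|_2 \leq 0$.

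The crucial step is to prove the geometric identity
\begin{equation*}
\min_{\substack{\tilde\theta \in \overline{\text{int}(\mathbb{C})} \\ \tilde\theta \neq 0}} a(\theta, \tilde\theta) \;=\; \min_{i,\, x_i \,:\, d_i(x_i)\neq 0} \Bigl(\alpha_i(x_i) - \tfrac{\pi}{2}\Bigr).
\end{equation*}
For the $\leq$ direction I would project $\theta$ onto the supporting hyperplane $H_{i^\star}$ attaining the right-hand minimum; a direct computation using $\|\theta\|_2 = 1$ shows that the angle between $\theta$ and this (normalized) projection equals $\alpha_{i^\star} - \pi/2$, and the projection lies in $H_{i^\star} \subseteq \overline{\text{int}(\mathbb{C})}$. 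For the $\geq$ direction I would invoke the spherical triangle inequality $a(\tilde\theta, d_i) \geq a(\theta, d_i) - a(\theta, \tilde\theta)$: every unit $\tilde\theta$ strictly inside the spherical cap of half-angle $\gamma \coloneqq \min_i(\alpha_i(x_i) - \pi/2)$ around $\theta$ satisfies $a(\tilde\theta, d_i(x_i)) > \pi/2$, hence $\langle \tilde\theta, d_i(x_i)\rangle < 0$ for every $i, x_i$, placing $\tilde\theta$ in $\text{int}(\mathbb{C})$ and thus outside $\overline{\text{int}(\mathbb{C})}$.

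Finally I would change variables to $r = \sin\gamma \in [0, 1]$. Because $\sin$ is strictly increasing on $[0, \pi/2]$, maximizing the inner minimum angle over $\theta$ is equivalent to maximizing $r$ subject to $r \leq \sin(\alpha_i(x_i) - \pi/2) = -\cos\alpha_i(x_i) = -\langle\theta, d_i(x_i)\rangle/\|d_i(x_i)\|_2$ for every $i, x_i$ with $d_i(x_i) \neq 0$, together with $\|\theta\|_2 = 1$. Clearing the denominator yields exactly $\langle \theta, d_i(x_i)\rangle + r\|d_i(x_i)\|_2 \leq 0$, and for indices with $d_i(x_i) = 0$ this inequality degenerates to $0 \leq 0$, so extending the constraint to all $x_i \in \mathbb{X}(\hat{s}_i)$ and all $i \in [N]$ is harmless. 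The main obstacle lies in the $\geq$ direction of the identity above: one must carefully handle the boundary case $a(\theta,\tilde\theta) = \gamma$ (by a limiting/continuity argument) and verify that for a closed convex cone defined by halfspaces one has $\text{int}(\mathbb{C}) = \{\theta : \langle \theta, d_i(x_i)\rangle < 0 \ \forall i, x_i\}$, both of which are standard but deserve explicit mention.
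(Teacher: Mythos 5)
Your proposal is correct and reaches the reformulation by the same overall strategy as the paper: reduce the inner minimization over $\overline{\text{int}(\mathbb{C})}$ to a minimization over the facet hyperplanes of $\mathbb{C}$, observe that the angle from $\theta$ to the hyperplane with normal $d_i(x_i)$ is $\alpha_i(x_i)-\pi/2$, and linearize via the sine to obtain $r\|d_i(x_i)\|_2 \le -\inner{\theta}{d_i(x_i)}$. Where you genuinely diverge is in how the key identity is justified. The paper first converts angles to squared chordal distances, derives a closed-form expression for the nearest unit vector lying on each hyperplane (the normalized Euclidean projection), proves its optimality by contradiction, and then walks back through a chain of angle/sine/distance equivalences; the reduction of the minimization over $\overline{\text{int}(\mathbb{C})}$ to the boundary hyperplanes is asserted rather than argued in both directions. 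You instead prove the two-sided identity directly: the projection serves as an explicit witness for the upper bound, and the reverse spherical triangle inequality $a(\tilde\theta,d_i)\ge a(\theta,d_i)-a(\theta,\tilde\theta)$ gives the lower bound by showing that every unit vector within angle $\gamma$ of $\theta$ satisfies all defining inequalities strictly and uniformly, hence lies in $\text{int}(\mathbb{C})$. This buys a cleaner and more self-contained argument for the direction the paper leaves implicit, and your uniform-strictness observation in fact covers the case of infinitely many constraints. The two caveats you flag yourself (the characterization of $\text{int}(\mathbb{C})$ by strict inequalities, and the degenerate case $\theta=-d_i/\|d_i\|_2$ where the projection vanishes) are real but minor, and the paper's own proof relies on the same unproven boundary characterization, so your write-up is at a comparable level of rigor.
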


\begin{corollary}[Incenter convex characterization]
\label{coro:incenter_convex_reformulation}
Assume $\emph{int}(\mathbb{C}) \neq \emptyset$. Then, problem \eqref{eq:incenter_reformulation} can be solved as
\begin{equation}
    \label{eq:incenter_convex_reformulation}
    \begin{aligned}
        \bar{\theta}^{\emph{in}} \in \arg\min_{\theta} \quad & \|\theta\|_2 \\
        \emph{s.t.} \quad & \inner{\theta}{\phi(\hat{s}_i,\hat{x}_i) - \phi(\hat{s}_i,x_i)} + \| \phi(\hat{s}_i,x_i) - \phi(\hat{s}_i,\hat{x}_i) \|_2 \leq 0 \quad \forall x_i \in \mathbb{X}(\hat{s}_i), \ \forall i \in [N],
    \end{aligned}
\end{equation}
where $\theta^{\emph{in}} = \bar{\theta}^{\emph{in}}/\|\bar{\theta}^{\emph{in}}\|_2$ is an optimal solution of problem \eqref{eq:incenter_reformulation}.
\end{corollary}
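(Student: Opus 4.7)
The plan is to show that the change of variables $\bar{\theta} := \theta/r$ converts problem \eqref{eq:incenter_reformulation} into problem \eqref{eq:incenter_convex_reformulation}, provided that the optimal radius $r^{\text{in}}$ in \eqref{eq:incenter_reformulation} is strictly positive. The interior assumption $\text{int}(\mathbb{C}) \neq \emptyset$ is the ingredient that will let me rule out $r^{\text{in}} = 0$.

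First I would verify strict positivity of $r^{\text{in}}$. If $\text{int}(\mathbb{C}) \neq \emptyset$, then by definition of $\mathbb{C}$ there exists $\theta_0$ with $\|\theta_0\|_2=1$ and a slack $\varepsilon > 0$ such that $\inner{\theta_0}{\phi(\hat{s}_i,\hat{x}_i)-\phi(\hat{s}_i,x_i)} \leq -\varepsilon \|\phi(\hat{s}_i,x_i)-\phi(\hat{s}_i,\hat{x}_i)\|_2$ for every nontrivial constraint (the trivial constraints $\phi(\hat{s}_i,\hat{x}_i) = \phi(\hat{s}_i,x_i)$ are vacuous in both formulations). Hence $(\theta_0,\varepsilon)$ is feasible for \eqref{eq:incenter_reformulation}, so $r^{\text{in}} \geq \varepsilon > 0$.

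Next I would perform the change of variables. Given any feasible $(\theta, r)$ for \eqref{eq:incenter_reformulation} with $r > 0$ and $\|\theta\|_2 = 1$, define $\bar{\theta} := \theta/r$, so that $\|\bar{\theta}\|_2 = 1/r$. Dividing the $i$-th constraint by $r > 0$ yields exactly the constraints of \eqref{eq:incenter_convex_reformulation}, and maximizing $r$ corresponds to minimizing $\|\bar{\theta}\|_2 = 1/r$. Conversely, given any feasible $\bar{\theta}$ for \eqref{eq:incenter_convex_reformulation}, we have $\bar{\theta} \neq 0$ (else the constraints reduce to $\|\phi(\hat{s}_i,x_i)-\phi(\hat{s}_i,\hat{x}_i)\|_2 \leq 0$, which contradicts the existence of any nontrivial constraint, and hence the nonemptiness of $\text{int}(\mathbb{C})$ would force $\phi$ to be trivial on every dataset point, contradicting our standing assumption). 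Set $r := 1/\|\bar{\theta}\|_2 > 0$ and $\theta := \bar{\theta}/\|\bar{\theta}\|_2$; then $\|\theta\|_2=1$, and multiplying the $i$-th constraint of \eqref{eq:incenter_convex_reformulation} by $r$ recovers the constraints of \eqref{eq:incenter_reformulation}. Thus the two problems are in bijection on their feasible sets, with objectives $r$ and $\|\bar{\theta}\|_2$ related by $r = 1/\|\bar{\theta}\|_2$.

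It follows that if $\bar{\theta}^{\text{in}}$ solves \eqref{eq:incenter_convex_reformulation}, then $(\bar{\theta}^{\text{in}}/\|\bar{\theta}^{\text{in}}\|_2,\, 1/\|\bar{\theta}^{\text{in}}\|_2)$ solves \eqref{eq:incenter_reformulation}, giving the stated formula $\theta^{\text{in}} = \bar{\theta}^{\text{in}}/\|\bar{\theta}^{\text{in}}\|_2$. The main obstacle is the step that ensures $r^{\text{in}} > 0$: without the interior hypothesis, the reformulation could have optimal value $r^{\text{in}} = 0$, in which case the change of variables $\bar{\theta}=\theta/r$ is undefined and \eqref{eq:incenter_convex_reformulation} becomes infeasible or unbounded. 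Once positivity of $r^{\text{in}}$ is secured via $\text{int}(\mathbb{C}) \neq \emptyset$, the remainder is a routine homogenization argument.
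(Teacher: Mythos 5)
Your proposal is correct and follows essentially the same route as the paper's proof: use $\text{int}(\mathbb{C}) \neq \emptyset$ to certify that the optimal $r$ in \eqref{eq:incenter_reformulation} is strictly positive, then dehomogenize via the change of variables $\bar{\theta} = \theta/r$, observing that maximizing $r$ is equivalent to minimizing $\|\bar{\theta}\|_2 = 1/r$ and that the constraint $\|\bar{\theta}\|_2 = 1/r$ can be dropped since $r$ appears nowhere else. The only cosmetic difference is that you phrase the equivalence as a bijection of feasible sets and you handle the degenerate constraints $\phi(\hat{s}_i,\hat{x}_i) = \phi(\hat{s}_i,x_i)$ explicitly, which the paper leaves implicit.
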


\begin{figure}
    \centering
    \includegraphics[width = 0.5\linewidth]{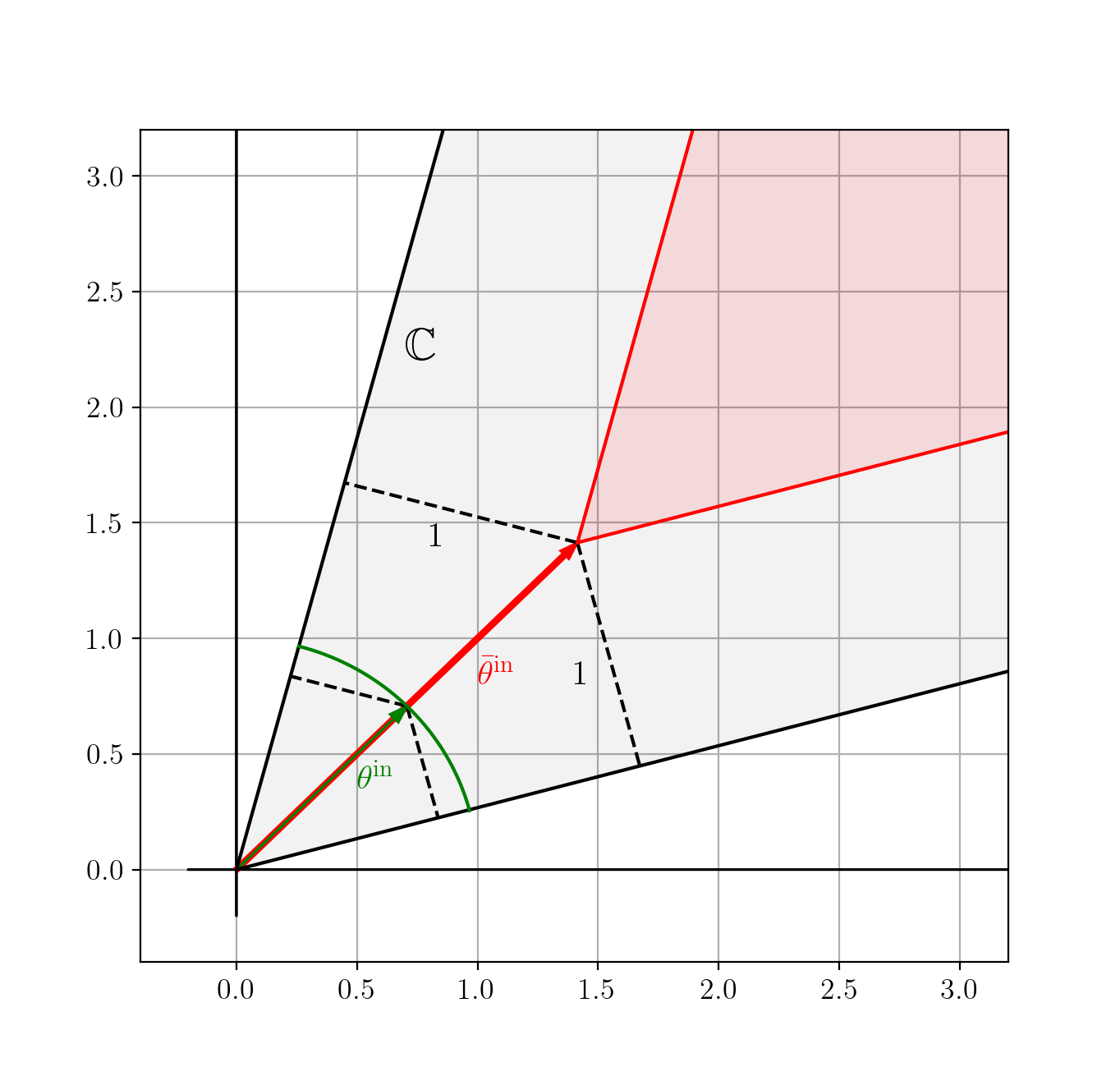}
    \caption{Geometrical illustration of Corollary \ref{coro:incenter_convex_reformulation} in a simple 2D example. The grey region represents $\mathbb{C}$, the green region represents the feasible set of \eqref{eq:incenter_reformulation}, and the red region represents the feasible set of \eqref{eq:incenter_convex_reformulation}. As can be seen, the optimal solution of \eqref{eq:incenter_convex_reformulation} can be interpreted as the smallest norm vector inside an inner cone with boundaries 1 unit away from the boundaries of $\mathbb{C}$. Also, it can be seen that by normalizing $\bar{\theta}^{\text{in}}$, we retrieve $\theta^{\text{in}}$, an optimal solution of \eqref{eq:incenter_reformulation}.}
    \label{fig:alternative_incenter}
\end{figure}

Figure \ref{fig:alternative_incenter} presents a geometrical intuition behind the alternative reformulation of Corollary \ref{coro:incenter_convex_reformulation} in a simple 2D example. Exploiting the nonempty interior assumption $\text{int}(\mathbb{C}) \neq \emptyset$, we were able to formulate problem \eqref{eq:incenter_convex_reformulation} without a norm equality constraint, making it a convex optimization problem. In particular, it is tractable whenever $\mathbb{X}(\hat{s}_i)$ has finitely many elements for all $i \in [N]$ . In sections \ref{sec:mixed} and \ref{sec:first-order}, we discuss approaches to deal with cases when $\mathbb{X}(\hat{s}_i)$ may have infinitely many elements. The tractability results in Theorem~\ref{theo:np_hard_circumcenter} and Corollary~\ref{coro:incenter_convex_reformulation} build on a connection to well-known problems in the optimization literature: computing extremal volume balls. 

\begin{remark}[Connection with extremal volume balls]
\label{remark:extremal_volume}
Interestingly, the circumcenter (resp.\,incenter) problem is closely related to the problem of finding the minimum (resp.\,maximum) volume ball that covers (resp.\,lies inside) a nonempty polyhedron. The difference is that, instead of optimizing the volume of a ball so that it covers (resp.\,lies inside) a nonempty polyhedron, we optimize the aperture angle of a circular cone, so that it covers (resp.\,lies inside) a polyhedral cone (see Figure~\ref{fig:centers}). It is known that computing the minimum volume ball that contains a polyhedron defined by linear inequalities is an intractable convex optimization problem \cite[Section 10.5.1]{nemirovsky1996interior}, and Theorem~\ref{theo:np_hard_circumcenter} indicates that the circumcenter has the same property. On the other hand, finding the maximum volume ball that lies inside a polyhedron defined by linear inequalities (a.k.a. the Chebyshev center of the polyhedron) is known to be equivalent to a convex optimization problem \cite[Section 8.5.1]{boyd2004convex}, and Corollary~\ref{coro:incenter_convex_reformulation} also confirms that the incenter has the same property.
\end{remark}

Since the problem of extremal volume balls is a special case of the more general problem of extremal volume ellipsoids \cite[Section 8.4]{boyd2004convex}, one natural generalization of the incenter and circumcenter concepts is to use \textit{ellipsoidal cones}. For instance, an ellipsoidal generalization of the circumcenter concept is used by \cite{besbes2023contextual} for online IO problems. An ellipsoidal generalization of the incenter reformulation of Theorem \ref{theo:incenter_reformulation} is
\begin{equation}
    \label{eq:ellipsoidal_incenter}
    \begin{aligned}
        \max_{\theta, A} \ & \log \left( \det \left( A  \right)\right) \\
        \emph{s.t.} \ & \inner{\theta}{\phi(\hat{s}_i,\hat{x}_i) - \phi(\hat{s}_i,x_i)} + \| A(\phi(\hat{s}_i,x_i) - \phi(\hat{s}_i,\hat{x}_i)) \|_2 \leq 0 \quad \forall x_i \in \mathbb{X}(\hat{s}_i), \forall i \in [N] \\
        &\|\theta\|_2 = 1, \quad A \succcurlyeq 0,
    \end{aligned}
\end{equation}
which is based on \cite[Section 8.4.2]{boyd2004convex}. Even though, to the best of our knowledge, there are no theoretical results for the performance of ellipsoidal incenter and circumcenter approaches for offline IO problems, their performance in our numerical experiments in Section \ref{sec:numerical} is indeed promising.

We end this section presenting a generalization of the program \eqref{eq:incenter_convex_reformulation}. This generalization will give us the flexibility to exploit structures specific to different IO problems. We generalize \eqref{eq:incenter_convex_reformulation} as
\begin{equation}
    \label{eq:incenter_gen}
    \begin{aligned}
        \min_{\theta} \quad & \mathcal{R} (\theta) \\
        \text{s.t.} \ \quad & \inner{\theta}{\phi(\hat{s}_i,\hat{x}_i) - \phi(\hat{s}_i,x_i)} + d(\hat{x}_i,x_i)\leq 0 \quad\quad \forall x_i \in \mathbb{X}(\hat{s}_i), \ \forall i \in [N] \\
        &\theta \in \Theta.
    \end{aligned}
\end{equation}
This generalization occurs on three levels:
\begin{enumerate}[(i)]
    \item \textbf{Regularizer}: We generalize the objective function to a general regularization function $\mathcal{R} : \mathbb{R}^p \to \mathbb{R}$. For example, we could have $\mathcal{R} (\theta) = \| \theta - \hat{\theta} \|_2^2$, where $\hat{\theta}$ is an a priory belief or estimate of the true cost vector, or $\mathcal{R} (\theta) = \| \theta \|$ for a general norm.
    \item \textbf{Distance in the response space}: Instead of computing the distance between vectors using the Euclidean distance, we consider a general distance function $d : \mathbb{X} \times \mathbb{X} \to \mathbb{R}_+$. For instance, $d(\hat{x},x) = \|\hat{x} - x\|$ on continuous spaces, or $d(\hat{x},x) = I(\hat{x},x)$ on discrete spaces, where $I(\hat{x},x) = 0$ if $\hat{x}=x$, otherwise $I(\hat{x},x) = 1$. We note that $d$ could also be a function of $\hat{s}$. For instance, we could use it to penalize the distance between $\hat{x}$ and $x$ in the \textit{feature space}, by choosing $d(\hat{x},x) = \|\phi(\hat{s}, \hat{x}) - \phi(, \hat{s}, x)\|$. For simplicity, we omit this dependency.
    \item \textbf{Prior information constraint}: We add the additional constraint $\theta \in \Theta$. The set $\Theta$ is used to encode any prior information or assumption we may have on the expert's true cost function, e.g., nonnegativity of the cost vector.
\end{enumerate}
Notice that as long as $\mathcal{R} (\theta)$ is convex in $\theta$ and the set $\Theta$ is convex, Problem \eqref{eq:incenter_gen} is a convex optimization problem.

%%%%%%%%%%%%%%%%%%%%%%%%%%%%%%%%%%%%%%%%%%%%%%%%%%%%%%%%%%%%%%%%%%%%

\section{Augmented Suboptimality Loss}
\label{sec:ASL}

In the previous section, we have shown how one can tackle IO problems when the dataset $\widehat{\mathcal{D}}$ is consistent with some cost $F(s,x) = \inner{\theta^\star}{\phi(s,x)} \in \mathcal{F}_\phi$. Moreover, to use the proposed approaches in practice, we also need to be able to list all elements of $\mathbb{X}(\hat{s}_i)$ for all $i \in [N]$, since each of these elements induces one inequality constraint (e.g., see \eqref{eq:incenter_gen}). In this section, we present approaches to tackle an IO problem based on a \textit{loss function}, which will allow us to handle problems with possibly inconsistent data and when the cardinality of $\mathbb{X}(\hat{s}_i)$ is very large (or even infinite). We first introduce a novel loss function for IO problems and show how solving the IO problem using this loss can be interpreted as a relaxation of the optimization program \eqref{eq:incenter_gen}.

Recall that in IO problems, our goal is to find a parameter vector $\theta$, such that when solving the optimization problem $\min_{x \in \mathbb{X}(s)} \ \inner{\theta}{\phi(s,x)}$, we can reproduce (or approximate) the action the expert would have taken given the same signal $s \in \mathbb{S}$ (see Section \ref{sec:problem}). In the data-driven IO literature, this problem is usually posed as a (regularized) empirical loss minimization problem
\begin{equation}
\label{eq:reg_loss_minimization}
    \min_{\theta \in \Theta} \ \kappa \mathcal{R}(\theta) + \frac{1}{N}\sum_{i=1}^N \ell_\theta (\hat{s}_i, \hat{x}_i),
\end{equation}
where $\mathcal{R} : \mathbb{R}^p \to \mathbb{R}$ is a regularization function, $\kappa$ is a nonnegative regularization parameter, and $\ell_\theta : \mathbb{S} \times \mathbb{X} \to \mathbb{R}$ is some loss function, designed in a way that, by solving \eqref{eq:reg_loss_minimization}, we find a cost vector $\theta$ such that the function $\inner{\theta}{\phi(s,x)}$ achieves our IO goal. Similar to supervised learning methods in machine learning, the regularization parameter $\kappa$ should be chosen to improve the out-of-sample performance of the learned model, e.g., using cross-validation techniques. In this work, we propose a new loss function for IO problems, which we name \textit{Augmented Suboptimality Loss} (ASL).

\begin{definition}[Augmented Suboptimality Loss]
\label{def:ASL}
Given a signal-response pair $(\hat{s},\hat{x})$, the Augmented Suboptimality Loss of a cost vector $\theta$ is
\begin{equation}
\label{eq:ASL}
\tag{ASL}
    \ell_\theta (\hat{s},\hat{x}) \coloneqq \max_{x \in \mathbb{X}(\hat{s})} \left\{ \inner{\theta}{\phi(\hat{s},\hat{x}) - \phi(\hat{s},x)} + d(\hat{x},x) \right\},
\end{equation}
where $\phi$ is a feature mapping and $d$ is a distance function.  
\end{definition}

To simplify the notation, we omit the dependence of the \ref{eq:ASL} on $\phi$ and $d$. We note that the \ref{eq:ASL} can be the well-known \textit{Suboptimality Loss} \citep{esfahani2018data} for the special case with no distance penalization, i.e., $d(\hat{x},x) = 0$. 

\subsection{Connections with incenter}

Next, we show how to solve problem \eqref{eq:reg_loss_minimization} when the loss function is the \ref{eq:ASL}. This reformulation is useful to tackle IO problems with inconsistent data, where there is no cost vector $\theta^\star$ that describes the dataset $\widehat{\mathcal{D}} = \{(\hat{s}_i, \hat{x}_i)\}_{i=1}^N$ perfectly. Using a standard epigraph reformulation, one can see that the optimization problem \eqref{eq:reg_loss_minimization} with the \ref{eq:ASL} yields the program
\begin{equation}
    \label{eq:incenter_gen_inconsistent}
    \begin{aligned}
        \min_{\theta, \beta_1, \ldots, \beta_N} \quad & \kappa \mathcal{R}(\theta) + \frac{1}{N}\sum_{i=1}^N \beta_i \\
        \text{s.t.} \quad \quad & \inner{\theta}{\phi(\hat{s}_i,\hat{x}_i) - \phi(\hat{s}_i,x_i)} + d(\hat{x}_i,x_i) \leq \beta_i \quad\quad \forall x_i \in \mathbb{X}(\hat{s}_i), \ \forall i \in [N] \\
        &\theta \in \Theta.
    \end{aligned}
\end{equation}

Problem \eqref{eq:incenter_gen_inconsistent} is closely related to the incenter reformulation \eqref{eq:incenter_gen}. More precisely, \eqref{eq:incenter_gen_inconsistent} can be interpreted as a \textit{relaxation} of \eqref{eq:incenter_gen} to handle IO problems with inconsistent data. To see this, simply notice that we can arrive at \eqref{eq:incenter_gen_inconsistent} by adding \textit{slack variables} to the constraints of \eqref{eq:incenter_gen} and using $\kappa$ as a trade-off parameter between the sum of slack variables (i.e., the total violation of the constraints of \eqref{eq:incenter_gen}) and the regularization term $\mathcal{R}(\theta)$. Another way to see \eqref{eq:reg_loss_minimization} as a relaxation of \eqref{eq:incenter_gen}, is to write \eqref{eq:incenter_gen} in a ``loss minimization'' form. Namely, notice that \eqref{eq:incenter_gen} can be written as
\begin{equation}
\label{eq:incenter_gen_loss}
    \min_{\theta \in \Theta} \ \mathcal{R}(\theta) + \mathbb{I} \left(\frac{1}{N}\sum_{i=1}^N \ell_\theta(\hat{s}_i,\hat{x}_i) \right),
\end{equation}
where the indicator function $\mathbb{I}(a) = 0$ if $a \leq 0$, otherwise $\mathbb{I}(a) = \infty$. Thus, \eqref{eq:reg_loss_minimization} can also be interpreted as Lagrangian relaxation of \eqref{eq:incenter_gen_loss}. In Appendix \ref{app:theoretical}, we present a different way to motivate the \ref{eq:ASL}, namely, as a convex surrogate of the so-called \textit{predictability loss}. There, we also discuss several properties of the \ref{eq:ASL} which are relevant to IO problems.

So far, we have only considered the case when the data $\{\hat{x}_i\}_{i=1}^N$ is feasible, i.e., $\hat{x}_i \in \mathbb{X}(\hat{s}_i)$. However, in practice, it may be the case that $\hat{x}_i \notin \mathbb{X}(\hat{s}_i)$ for some $i \in [N]$. This may happen due to noise in the data acquisition, or simply due to a mismatch in the choice of the set mapping $\mathbb{X}$ compared with the true constraint set used by the expert agent. For example, consider the problem where the signal $\hat{s}_i$ represents a graph, and $\mathbb{X}(\hat{s}_i)$ is the set of \emph{Hamiltonian cycles} over this graph (i.e., a graph cycle that visits each node exactly once). This scenario is common when modeling vehicle routing problems, where the nodes of the graph represent a set of customers, and the Hamiltonian cycle represents the sequence of customers a driver visited (for example, to deliver packages). In this scenario, infeasible data could reflect, for example, missing data on the complete cycle driven by the driver (i.e., noise in the data acquisition), or the fact that in reality, the driver can visit the same customer multiple times to deliver missing packages (i.e., mismatch in the choice of the set mapping $\mathbb{X}(\hat{s}_i)$).

\begin{remark}[Handling infeasible data]
One way to handle infeasible data is just trying to ignore these cases, treating them as outliers in the data. Numerically, since the nonnegativity of the \ref{eq:ASL} depends on the assumption that $\hat{x}_i \in \mathbb{X}(\hat{s}_i) \ \forall i \in [N]$ (see Proposition \ref{prop:augmented_suboptimality_surrogate}), having infeasible data may lead to negative loss values.  Thus, a possible way to handle these cases is to use a modified loss function $\tilde{\ell}_\theta(\hat{s},\hat{x}) = \max\{0, \ell_\theta(\hat{s},\hat{x})\}$, which simply ignores negative loss values (i.e., infeasible data). This technique shares the same principle as the bounded rationality loss of \citep{esfahani2018data}. Importantly, this modification preserves the convexity of the loss minimization problem. For instance, this modification is equivalent to simply adding the constraints $\beta_i \geq 0, \ \forall i \in [N]$ to \eqref{eq:incenter_gen_inconsistent}.
\end{remark}

The IO problem shares some similarities with \textit{structured prediction} problems, which commonly refer to the class of supervised learning problems, where the output space consists of a finite set of structured objects (e.g., graphs), instead of a simple set of labels, like in standard multiclass classification problems \citep{taskar2005learning}. Interestingly, the IO formulation in Eq. \eqref{eq:incenter_gen_inconsistent} is closely related to some formulations proposed to solve structured prediction problems.

\begin{remark}[Connections with structured prediction]
\label{remark:connections_structured} In the incenter program \eqref{eq:incenter_gen_inconsistent}, when $\mathcal{R}$ is the Euclidean norm, $d$ is the Hamming distance, and feature function $\phi$ is linear in $x$, learning the incenter cost vector is equivalent to the Maximum Margin Planning problem presented in \citep{ratliff2006maximum}, which is a type of structured prediction problem. More generally, formulation \eqref{eq:incenter_gen_inconsistent} is closely related to the so-called Structured Support Vector Machine (SSVM) approach for structured prediction problems \citep{tsochantaridis2005large, nowozin2011structured}.
\end{remark}

The connection between IO problems with discrete feasible sets and the SSVM approach to structured prediction problems is related to the fact that when the constraint set $\mathbb{X}(\hat{s})$ has finitely many elements, each of these elements can be interpreted as a class in a multiclass classification problem. Under this interpretation, the optimization problem \eqref{eq:incenter_gen_inconsistent} can be viewed as a generalization of the soft-margin SMV problem, where the ``classes'' $x \in \mathbb{X}(\hat{s})$ can be complex structured objects, which is precisely the SSVM problem. Moreover, the concept of ``margin'' in the SVM framework is related to the distance function $d$ in \eqref{eq:incenter_gen_inconsistent}, which comes from the fact that in our incenter-based IO formulation, we want to maximize the angle between the incenter vector and the boundaries of the set $\mathbb{C}$ (see Figure \ref{fig:incenter}). These observations reveal an interesting connection between incenter-based approaches for IO problems with discrete constraint sets and SSVM approaches for structured prediction problems. 

In the last part of this section, we revisit this relation between the incenter and circumcenter concepts through the lens of the regret $R$ defined in~\eqref{eq:regret}. In fact, given two cost vectors~$(\theta, \theta^\star)$, there is an asymmetry $R(\theta, \theta^\star) \neq R(\theta^\star, \theta)$, interestingly, each of which relating to one of these concepts.  

\begin{remark}[Connections with regret]
\label{remark:regret}
Consider the cost vectors~$(\theta, \theta^\star)$ and the regret~\eqref{eq:regret2}.  
\begin{enumerate}[(i)]
    \item \textbf{SPO vs Suboptimality losses:} The regret~$R(\theta, \theta^\star)$ is indeed the Smart ``Predict, then Optimize" loss~(SPO) studied in \citep{elmachtoub2022smart}, with respect to which the circumcenter in Definition~\ref{def:circumcenter} is shown to be the worst case optimal. However, the symmetric counterpart~$R(\theta^\star, \theta)$ coincides with the Suboptimality loss from~\citep{esfahani2018data}, which, as discussed above, is a special case of the \ref{eq:ASL} connected to the incenter in Definition~\ref{def:incenter}.

    \item \textbf{Convexity and tractability:} The regret~$R$ is convex in the second argument~\citep{esfahani2018data} (note the connection to suboptimality in the previous point), whereas inherently nonconvex in the first argument; see~\citep{elmachtoub2022smart} for the connection of the latter to the 0-1 loss in classification problems. This observation is indeed aligned with the intractability results of circumcenter (Theorem~\ref{theo:np_hard_circumcenter}) and the tractability of incenter (Corollary~\ref{coro:incenter_convex_reformulation}). It is worth noting that \citep{elmachtoub2022smart} also proposes a convexified version of the SPO loss (i.e., the regret in the first argument), reminiscent of the hinge loss in classification problems. 
    
    \item \textbf{Additional required data measurments}: Given the cost vector~$\theta$ and the IO data pair $(\hat{s}, x^\star)$~(for the definition of $x^\star$ see \eqref{eq:regret1}), the regret~$R(\theta^\star, \theta)$ can be computed without the knowledge of the ground truth $\theta^\star$, whereas its symmetric counterpart~$R(\theta, \theta^\star)$ depends explicitly on $\theta^\star$ (or its projection through a feature function as in \citep{elmachtoub2022smart}). 
\end{enumerate}
\end{remark}

\subsection{General reformulation for mixed-integer feasible sets}
\label{sec:mixed}

In this section, we present a way to reformulate the IO problem using the \ref{eq:ASL} for problems when $\mathbb{X}(\hat{s})$ is mixed-integer set, which generalizes many formulations from the literature. For this purpose, we consider the mixed-integer feasible set 
\begin{equation}
    \label{eq:MI_set}
    \mathbb{X}(\hat{s}) \coloneqq \left\{ (y,z) \in \mathbb{R}^u \times \mathbb{Z}^v : \hat{A}y + \hat{B}z \leq \hat{c}, \ z \in \mathbb{Z}(\hat{w}) \right\},
\end{equation}
where $\hat{s} \coloneqq (\hat{A}, \hat{B}, \hat{c}, \hat{w})$, and $\mathbb{Z}(\hat{w})$ is a bounded set that may depend on the signal $\hat{w}$. Since $y$ is a continuous variable, $\mathbb{X}(\hat{s})$ has infinitely many elements and we cannot use \eqref{eq:incenter_gen_inconsistent} for this IO problem in practice. To solve this problem, one could use first-order iterative methods to directly optimize the IO loss minimization problem \eqref{eq:reg_loss_minimization} (we discuss such approaches in Section \ref{sec:first-order}). In this section, we leverage classical tools from convex duality to reformulate \eqref{eq:reg_loss_minimization} with the \ref{eq:ASL} and constraint set \eqref{eq:MI_set} as a tractable finite convex optimization.

For this reformulation, we use the following hypothesis function
\begin{equation}
    \label{eq:MI_hypothesis}
    \inner{\theta}{\phi(s,x)} = \langle y, Q_{yy} y \rangle + \langle y, Q \phi_1(w, z) \rangle + \langle q, \phi_2(w, z) \rangle,
\end{equation}
where $x \coloneqq (y,z)$, $\theta \coloneqq (\text{vec}(Q_{yy}), \text{vec}(Q), q) \in \mathbb{R}^{u^2 + um + r}$, $Q_{yy} \succcurlyeq 0$, and $\phi_1 : \mathbb{W} \times \mathbb{Z}^v \to \mathbb{R}^{m}$ and $\phi_2 : \mathbb{W} \times \mathbb{Z}^v \to \mathbb{R}^{r}$ are feature functions ($\phi$ can be written in terms of $\phi_1$ and $\phi_2$). The choice of a quadratic hypothesis and $\mathbb{X}(\hat{s})$ with linear inequality constraints is chosen because it generalizes the linear hypothesis case, and also for simplicity of exposition. In general, as long as the hypothesis function and constraint set are convex w.r.t. the continuous part of the decision vector (e.g., \textit{conic representable} problems \citep{esfahani2018data}), similar results could be derived. Continuing, we use $d(\hat{x}, x) = \|\hat{y} - y\|_\infty + d_z(\hat{z}, z)$, that is, we use the $\infty$-norm for the continuous part of the decision vector and a general distance function for the integer part of the decision vector. We use the $\infty$-norm for the continuous part of the decision vector because, to use reformulation techniques based on convex duality, we need the inner maximization problem of the \ref{eq:ASL} to be concave in $y$, and by introducing auxiliary integer variables, we can reformulate the $\infty$-norm as a concave function of $y$. More precisely, we exploit the identity $\|\hat{y} - y\|_\infty = \max_{h\in \{-1, 0, 1\}^u, \|h\|_1 = 1} \langle h, \hat{y} - y \rangle$.

\begin{theorem}[\ref{eq:ASL} and mixed-integer feasible set]
\label{theo:MI_reformulation}
For the mixed-integer feasible set \eqref{eq:MI_set}, hypothesis function \eqref{eq:MI_hypothesis}, and distance function $d(\hat{x}, x) = \|\hat{y} - y\|_\infty + d_z(\hat{z}, z)$, problem \eqref{eq:reg_loss_minimization} can be reformulated as
\begin{equation}
\label{eq:MI_reformulation}
    \begin{aligned}
    \min \ & \kappa\mathcal{R}(\theta) + \frac{1}{N}\sum_{i=1}^N \beta_i  \\
    \emph{s.t.} \ & \theta = (\emph{vec}(Q_{yy}), \emph{vec}(Q), q) \in \Theta, \quad \lambda_{ijk}\geq 0, \quad \alpha_{ijk}, \beta_i \in \mathbb{R} \\
    & \inner{\theta}{\phi(\hat{s}_i,\hat{x}_i)} + \alpha_{ijk} + \langle \lambda_{ijk}, \hat{c}_i-\hat{B}_iz_{ij} \rangle - \langle q, \phi_2(\hat{w}_i,z_{ij}) \rangle + \langle h_k, \hat{y}_i \rangle + d_z(\hat{z}_i, z_{ij}) \leq \beta_i \\
    & \begin{bmatrix}
    Q_{yy} & Q \phi_1(\hat{w}_i, z_{ij}) + h_k + \hat{A}_i^\top \lambda_{ijk} \\
    * & 4\alpha_{ijk}
    \end{bmatrix} \succcurlyeq 0,
    \end{aligned}
\end{equation}
where the constraints are for all $(i, j, k) \in [N] \times [M_i] \times [2u]$, $\mathbb{Z}(\hat{w}_i) \coloneqq \{z_{i1}, \ldots,z_{ij}, \ldots, z_{iM_i}\}$, $M_i \coloneqq |\mathbb{Z}(\hat{w}_i)|$, and if $k \leq u$ (resp. $k > u$),  $h_k$ is the vector of zeros except for the $k$'th (resp. $(k-u)$'th) element, which is equal to 1 (resp. -1).
\end{theorem}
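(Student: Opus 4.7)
The plan is to start from the epigraph reformulation of problem \eqref{eq:reg_loss_minimization} with the \ref{eq:ASL}, namely
\begin{equation*}
\min \ \kappa\mathcal{R}(\theta)+\tfrac{1}{N}\sum_i \beta_i\quad \text{s.t.}\quad \theta\in\Theta,\ \ell_\theta(\hat s_i,\hat x_i)\le \beta_i\ \forall i,
\end{equation*}
and to massage the constraint $\ell_\theta(\hat s_i,\hat x_i)\le \beta_i$ into the finite set of conic inequalities appearing in \eqref{eq:MI_reformulation}. Substituting \eqref{eq:MI_hypothesis} and $d(\hat x,x)=\|\hat y-y\|_\infty+d_z(\hat z,z)$, and using the hypothesis that $\mathbb{Z}(\hat w_i)=\{z_{i1},\dots,z_{iM_i}\}$ is finite, I would first push the maximum over the integer variable outside, so that the constraint becomes $\max_{j\in[M_i]}G_{ij}(\theta,\beta_i)\le 0$ for a function $G_{ij}$ that still involves an inner maximization over the continuous variable $y$ in the polyhedron $\{y:\hat A_i y\le \hat c_i-\hat B_i z_{ij}\}$.

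Next, I would handle the $\infty$-norm using the identity $\|\hat y_i-y\|_\infty=\max_{k\in[2u]}\langle h_k,\hat y_i-y\rangle$ where the $2u$ vectors $h_k=\pm e_j$ are exactly those in the theorem statement. Exchanging the two maxima collapses the ASL constraint into the family, indexed by $(j,k)$, of inner problems
\begin{equation*}
\max_{y:\hat A_i y\le \hat c_i-\hat B_i z_{ij}} \bigl\{-\langle y, Q_{yy} y\rangle-\langle y, Q\phi_1(\hat w_i,z_{ij})+h_k\rangle\bigr\}\le \beta_i-C_{ijk}(\theta),
\end{equation*}
where $C_{ijk}(\theta)$ collects the terms independent of $y$ (i.e.\ $\langle\theta,\phi(\hat s_i,\hat x_i)\rangle$, $-\langle q,\phi_2(\hat w_i,z_{ij})\rangle$, $\langle h_k,\hat y_i\rangle$ and $d_z(\hat z_i,z_{ij})$). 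Since $Q_{yy}\succcurlyeq 0$, the objective is concave in $y$ and the constraint is polyhedral, so Slater's condition (applied for example after perturbing to a non-empty interior or handling the polyhedral case via strong LP/convex-QP duality) gives strong duality. Dualizing the constraint with multipliers $\lambda_{ijk}\ge 0$ and swapping min-max I obtain
\begin{equation*}
\min_{\lambda_{ijk}\ge 0}\ \bigl\langle \lambda_{ijk},\hat c_i-\hat B_i z_{ij}\bigr\rangle + \max_{y}\bigl\{-\langle y,Q_{yy}y\rangle-\langle y,\,Q\phi_1(\hat w_i,z_{ij})+h_k+\hat A_i^\top\lambda_{ijk}\rangle\bigr\}.
\end{equation*}

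The final step is to represent the unconstrained inner maximum by an auxiliary variable $\alpha_{ijk}$. For $Q_{yy}\succcurlyeq 0$ the maximum equals $\tfrac14\,c^\top Q_{yy}^\dagger c$ when $c:=Q\phi_1(\hat w_i,z_{ij})+h_k+\hat A_i^\top\lambda_{ijk}$ lies in $\mathcal{R}(Q_{yy})$ and $+\infty$ otherwise; the Schur-complement characterization then says that $\alpha_{ijk}$ is a valid upper bound iff the $2\times 2$ block matrix in \eqref{eq:MI_reformulation} is positive semidefinite (this is the standard generalized Schur complement for PSD matrices, which simultaneously enforces $c\in\mathcal{R}(Q_{yy})$ and $4\alpha_{ijk}\ge c^\top Q_{yy}^\dagger c$). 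Combining this LMI with the affine inequality $C_{ijk}(\theta)+\langle\lambda_{ijk},\hat c_i-\hat B_i z_{ij}\rangle+\alpha_{ijk}\le\beta_i$ yields exactly the constraints of \eqref{eq:MI_reformulation}, and since the minimization over $(\lambda_{ijk},\alpha_{ijk})$ is absorbed into the outer minimization, the two programs are equivalent.

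The main obstacle I anticipate is the rigorous justification of strong duality for the inner concave-quadratic program and, simultaneously, ensuring that the generalized Schur complement is used correctly when $Q_{yy}$ is merely PSD (not PD): one must make sure that the ``$c\in\mathcal{R}(Q_{yy})$'' requirement for finiteness of the inner maximum is automatically enforced by the LMI, and that swapping $\min_{\lambda\ge 0}$ with the epigraphical variable $\alpha_{ijk}$ preserves the value of the original ASL constraint. All remaining manipulations are standard bookkeeping.
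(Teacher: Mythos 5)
Your proposal follows essentially the same route as the paper's proof: express the ASL as a finite maximum over the integer variable $z_{ij}$ and the $\infty$-norm vertices $h_k$, dualize the inner concave quadratic program over $y$ (valid since $Q_{yy}\succcurlyeq 0$ and the constraints are polyhedral), encode the resulting dual value via the generalized Schur-complement LMI with the epigraph variable $\alpha_{ijk}$, and absorb the minimization over $(\lambda_{ijk},\alpha_{ijk})$ into the outer program — which is exactly the paper's max-min interchange step \eqref{eq:opt_equivalence}. The points you flag as needing care (the range condition $c\in\mathcal{R}(Q_{yy})$ being enforced by the LMI, and the validity of swapping the finite max with the dual min) are precisely the ones the paper handles, so the argument is complete as outlined.
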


In case we use a linear hypothesis function instead of a quadratic one, the matrix inequality constraints of \eqref{eq:MI_reformulation} reduce to much simpler linear equality constraints.

\begin{corollary}[LP reformulation for linear hypotheses]
\label{coro:MI_reformulation_linear}
For the mixed-integer feasible set \eqref{eq:MI_set}, linear hypothesis function (i.e., \eqref{eq:MI_hypothesis} with $Q_{yy} = 0$), and distance function $d(\hat{x}, x) = \|\hat{y} - y\|_\infty + d_z(\hat{z}, z)$, problem \eqref{eq:reg_loss_minimization} can be reformulated as
\begin{equation}
\label{eq:MI_reformulation_linear}
    \begin{aligned}
    \min \ & \kappa\mathcal{R}(\theta) + \frac{1}{N}\sum_{i=1}^N \beta_i  \\
    \emph{s.t.} \ & \theta = (\emph{vec}(Q), q) \in \Theta, \quad \lambda_{ijk}\geq 0, \quad \beta_i \in \mathbb{R} \\
    & \inner{\theta}{\phi(\hat{s}_i,\hat{x}_i)} + \langle \lambda_{ijk}, \hat{c}_i-\hat{B}_iz_{ij} \rangle - \langle q, \phi_2(\hat{w}_i,z_{ij}) \rangle + \langle h_k, \hat{y}_i \rangle + d_z(\hat{z}_i, z_{ij}) \leq \beta_i \\
    & Q \phi_1(\hat{w}_i, z_{ij}) + h_k + \hat{A}_i^\top \lambda_{ijk} = 0,
    \end{aligned}
\end{equation}
where the constraints are for all $(i, j, k) \in [N] \times [M_i] \times [2u]$, $\mathbb{Z}(\hat{w}_i) \coloneqq \{z_{i1}, \ldots, z_{ij}, \ldots, z_{iM_i}\}$, $M_i \coloneqq |\mathbb{Z}(\hat{w}_i)|$, and if $k \leq u$ (resp. $k > u$),  $h_k$ is the vector of zeros except for the $k$'th (resp. $(k-u)$'th) element, which is equal to 1 (resp. -1).
\end{corollary}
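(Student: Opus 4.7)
The plan is to obtain Corollary~\ref{coro:MI_reformulation_linear} directly from Theorem~\ref{theo:MI_reformulation} by specializing to $Q_{yy} = 0$ and simplifying the resulting linear matrix inequality. Under this specialization, the hypothesis \eqref{eq:MI_hypothesis} reduces to a linear one in $y$, and all other ingredients of \eqref{eq:MI_reformulation} (the objective, the $\beta_i$ inequality, and the nonnegativity of $\lambda_{ijk}$) remain unchanged. The only structural change concerns the $(u+1)\times(u+1)$ semidefinite constraint, which becomes
\begin{equation*}
\begin{bmatrix}
0 & Q \phi_1(\hat{w}_i, z_{ij}) + h_k + \hat{A}_i^\top \lambda_{ijk} \\
\bigl(Q \phi_1(\hat{w}_i, z_{ij}) + h_k + \hat{A}_i^\top \lambda_{ijk}\bigr)^\top & 4\alpha_{ijk}
\end{bmatrix} \succcurlyeq 0.
\end{equation*}

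Next, I would invoke the standard linear-algebra fact that a symmetric positive semidefinite matrix with a zero diagonal entry must have the corresponding row and column identically zero. Equivalently, via a Schur-complement argument (treating $Q_{yy}=0$ as the $(1,1)$-block), the displayed LMI holds if and only if $\alpha_{ijk} \geq 0$ and the off-diagonal vector vanishes, that is,
\begin{equation*}
Q \phi_1(\hat{w}_i, z_{ij}) + h_k + \hat{A}_i^\top \lambda_{ijk} = 0.
\end{equation*}
This replaces the matrix inequality in \eqref{eq:MI_reformulation} by exactly the linear equality constraint appearing in \eqref{eq:MI_reformulation_linear}, together with the bound $\alpha_{ijk} \geq 0$.

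Finally, I would note that $\alpha_{ijk}$ appears elsewhere in the program only through the inequality constraint for $\beta_i$, and there with a positive coefficient. Consequently, at any optimum it is both feasible and optimal to set $\alpha_{ijk} = 0$, which eliminates the auxiliary variable entirely and removes the $\alpha_{ijk}$ term from the $\beta_i$ constraint. The resulting program is precisely \eqref{eq:MI_reformulation_linear}, completing the derivation. The only mildly delicate step is the reduction of the LMI to a linear equality in the presence of a zero diagonal block, which is a classical property of PSD matrices; I therefore do not foresee any substantive obstacle beyond careful bookkeeping of indices.
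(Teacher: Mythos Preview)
Your proposal is correct and is precisely the approach the paper intends: the corollary is stated immediately after Theorem~\ref{theo:MI_reformulation} without a separate proof, and the paper simply remarks that for a linear hypothesis the matrix inequality constraints of \eqref{eq:MI_reformulation} reduce to linear equalities. Your argument---setting $Q_{yy}=0$, using the zero-diagonal-block property of PSD matrices to collapse the LMI into the equality $Q\phi_1(\hat{w}_i,z_{ij})+h_k+\hat{A}_i^\top\lambda_{ijk}=0$ together with $\alpha_{ijk}\geq 0$, and then eliminating $\alpha_{ijk}$ by optimality---fills in exactly the details the paper leaves implicit.
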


We note that one can reduce the number of constraints in these reformulations by a factor of $2u$ by using $d(\hat{x}, x) = d_z(\hat{z}, z)$, i.e., only penalizing the integer part of the decision vector, which is equivalent to setting $h_k = 0 \ \forall k \in [2u]$.

\begin{remark}[Generality of Theorem \ref{theo:MI_reformulation}]
\label{remark:generality}
Reformulation \eqref{eq:MI_reformulation} in Theorem \ref{theo:MI_reformulation}, for IO problem with mixed-integer decision sets, generalizes several reformulations from the literature. For instance, for an IO problem with purely continuous decision sets and linear hypothesis functions, reformulation \eqref{eq:MI_reformulation} with $d_z(\hat{x}, x) = 0$ and $h_k =0 \ \forall k \in [2u]$ reduces to classical Inverse Linear Optimization reformulations \citep{ahuja2001inverse, chan2019inverse}. Similarly, if the IO problem has purely continuous decision set and a quadratic hypothesis function, then the program \eqref{eq:MI_reformulation} with $d_z(\hat{x}, x) = 0$ and $h_k =0 \ \forall k \in [2u]$ reduces to the LMI reformulation in \citep{akhtar2021learning}. If the decision set is purely discrete, that is, $\mathbb{X}(\hat{s})$ has finitely many elements, then \eqref{eq:MI_reformulation} and \eqref{eq:MI_reformulation_linear} recover \eqref{eq:incenter_gen_inconsistent}.
\end{remark}

In conclusion, by dualizing the continuous part of the problem, Theorem \ref{theo:MI_reformulation} and Corollary \ref{coro:MI_reformulation_linear} present a way to deal with a case when the feasible set $\mathbb{X}(\hat{s})$ is a mixed-integer set. It is worth noting that when the feasible set $\mathbb{X}(\hat{s})$ is a mixed-integer set, the size of reformulations \eqref{eq:MI_reformulation} and \eqref{eq:MI_reformulation_linear} grow linearly in the number of integer variables, in the dimension of the continuous variables, and in the size of the dataset. That means, for Inverse Optimization problems with large datasets and/or mixed-integer sets with a large number of decision variables, these reformulations can be computationally challenging to solve. This issue is the main focus of our tailored first-order algorithm in the next section.

%%%%%%%%%%%%%%%%%%%%%%%%%%%%%%%%%%%%%%%%%%%%%%%%%%%%%%%%%%%%%%%%%%%%

\section{Tailored Algorithm: Stochastic Approximate Mirror Descent}
\label{sec:first-order}

All approaches to solving IO problems presented so far depend on optimization programs with possibly a large number of constraints. Given a dataset $\{(\hat{s}_i, \hat{x}_i)\}_{i=1}^N$, these optimization problems have at least $\sum_{i=1}^N |\mathbb{X}(\hat{s}_i)|$ (or $\sum_{i=1}^N |\mathbb{Z}(\hat{w}_i)|$ for the mixed-integer case) constraints. When $N$ is too large (i.e., we have too many signal-response examples) or $|\mathbb{X}(\hat{s}_i)|$ is too large, solving these optimization programs may be intractable in practice. Thus, the focus of this section is to develop a tailored first-order algorithm to tackle such IO problems in a provably efficient way, opening doors to a wider range of real-world applications. A straightforward first-order approach to solve \eqref{eq:reg_loss_minimization} is the standard \textit{Project Subgradient method} \citep{shor1985minimization}. This is a general first-order optimization algorithm, and it converges to an optimal solution of the problem under mild convexity conditions on the objective function and constraint set. For example, this was the approach used in \citep{ratliff2006maximum} to solve the Maximum Margin Planning problem (see Remark \ref{remark:connections_structured}). Another idea is to solve \eqref{eq:reg_loss_minimization} as a minimax problem. \cite{taskar2006structured} showed that in some special cases, this problem can be formulated as a bilinear saddle point problem, and then used Nesterov's dual extragradient method \citep{nesterov2007dual} to solve it. However, other than only being applicable to a restrictive class of problems, this approach also requires $2N$ projections onto $\mathbb{X}(\hat{s}_i)$ to be computed per iteration of the algorithm, which may be prohibitive in many applications. Other algorithms for minimax problems, such as Nemirovski's Mirror-Prox algorithm \citep{nemirovski2004prox}, would suffer from similar drawbacks. In this section, we introduce an efficient optimization algorithm that can be applied to general IO problems, while also exploiting the specific structures that stem from IO problems. To this end, we define the following notion of a stochastic approximate subgradient.

\begin{definition}[Stochastic approximate subgradient]
\label{def:SAS}
Let $f : \Theta \to \mathbb{R}$ be a convex function. We say that the random vector $\tilde{g}_\varepsilon(\theta)$ is a \emph{stochastic approximate subgradient} of $f$ at $\theta$ if $\mathbb{E}\left[\tilde{g}_\varepsilon(\theta) \ | \ \theta\right] = g_\varepsilon(\theta)$ and $g_\varepsilon(\theta)$ is an $\varepsilon$-subgradient of $f$ for some $\varepsilon \geq 0$, that is,
$$
f(\theta) - f(\nu) \leq \inner{g_\varepsilon(\theta)}{\theta - \nu} + \varepsilon, \quad \forall \nu \in \Theta.
$$
\end{definition}
For different notions of approximate subgradients, see \citep{ratliff2007approximate, tacskesen2022semi}. Next, we propose to solve IO problems with a novel algorithm, the \textit{Stochastic Approximate Mirror Descent} (SAMD). We first define the SAMD as an algorithm to optimize general convex programs and prove convergence rates for it. After that, we discuss how we can use the SAMD algorithm to exploit the structure of IO problems. For a  function $f$ and set $\Theta$, the SAMD updates are
\begin{equation}
\label{alg:SAMD}
\tag{SAMD}
\theta_{t+1} = \argmin_{\theta \in \Theta} \big\{ \eta_t \inner{\tilde{g}_{\varepsilon_t}(\theta_t)}{\theta} + \mathcal{B}_\omega(\theta,\theta_t)\big\},
\end{equation}
where $\eta_t$ is the step-size, the function $\mathcal{B}_\omega$ is the Bregman divergence w.r.t. $\omega : \Theta \to \mathbb{R}$ \cite[Section 4]{bubeck2015convex}, and $\tilde{g}_{\varepsilon_t}(\theta_t)$ is a \textit{stochastic approximate subgradient} of $f$ as defined in Definition \ref{def:SAS}. The \ref{alg:SAMD} algorithm can be interpreted as a combination of a stochastic mirror descent algorithm \cite[Section 6.1]{bubeck2015convex} and an $\varepsilon$-subgradient method \cite[Section 3.3]{bertsekas2015convex}. Next, we prove a convergence rate for the \ref{alg:SAMD} algorithm, where we use the concept of \textit{relative strong convexity}, which is a generalization of the standard strong convexity property \citep{lu2018relatively}. Namely, if $\mathcal{R}$ is $\alpha$-strongly convex relative to $\mathcal{B}_\omega$, then $\mathcal{R}(x) - \mathcal{R}(y) \leq \langle g(x), x-y \rangle - \alpha \mathcal{B}_\omega(y,x)$, where $g(x) \in \partial R(x)$.

\begin{proposition}[\ref{alg:SAMD} convergence rate]
\label{prop:samd_rate}
Let $f : \Theta \to \mathbb{R}$ be a convex function, and $\Theta$ be a convex set. Assume $\mathcal{B}_\omega(\theta,\nu) \leq R^2$, for some $R>0$, and $\mathbb{E}\left[\|\tilde{g}_{\varepsilon_t}(\theta)\|_*^2 \ | \ \theta\right] \leq G^2$, $\forall \theta,\nu \in \Theta$. Using $\eta_t = c/\sqrt{t}$ for some constant $c > 0$, the \ref{alg:SAMD} algorithm guarantees
\begin{equation*}
    \mathbb{E}\left[f\left(\frac{1}{T}\sum_{t=1}^T \theta_t\right) \right] - \min_{\theta \in \Theta} f(\theta) \leq \left( \frac{R^2}{c} + cG^2\right)\frac{1}{\sqrt{T}} + \frac{1}{T}\sum_{t=1}^T\varepsilon_t.
\end{equation*}
Moreover, if we assume $f = h + \mathcal{R}$, where $h$ is convex and $\mathcal{R}$ is $\alpha$-strongly convex relative to $\mathcal{B}_\omega$, then using $\eta_t = 2/\alpha(t+1)$, the \ref{alg:SAMD} algorithm guarantees
\begin{equation*}
    \mathbb{E}\left[f\left(\frac{2}{T(T+1)}\sum_{t=1}^T t\theta_t\right) \right] - \min_{\theta \in \Theta} f(\theta) \leq \frac{2G^2}{\alpha(T+1)} + \frac{2}{T(T+1)}\sum_{t=1}^T t\varepsilon_t.
\end{equation*}
\end{proposition}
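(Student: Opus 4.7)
The plan is to adapt the classical stochastic mirror descent analysis, inserting the $\varepsilon_t$-subgradient slack wherever a subgradient inequality is used and taking conditional expectations to remove the stochastic noise.

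First, I would extract the standard mirror descent one-step inequality from the optimality condition of the \ref{alg:SAMD} update. Writing the first-order condition as $\inner{\eta_t \tilde{g}_{\varepsilon_t}(\theta_t) + \nabla \omega(\theta_{t+1}) - \nabla \omega(\theta_t)}{\nu - \theta_{t+1}} \geq 0$ for all $\nu \in \Theta$, and combining it with the three-point Bregman identity, one obtains
$$
\eta_t \inner{\tilde{g}_{\varepsilon_t}(\theta_t)}{\theta_{t+1} - \nu} \leq \mathcal{B}_\omega(\nu,\theta_t) - \mathcal{B}_\omega(\nu,\theta_{t+1}) - \mathcal{B}_\omega(\theta_{t+1},\theta_t).
$$
Splitting $\theta_{t+1} - \nu = (\theta_{t+1}-\theta_t)+(\theta_t-\nu)$, using $1$-strong convexity of $\omega$ to bound $\mathcal{B}_\omega(\theta_{t+1},\theta_t) \geq \tfrac12\|\theta_{t+1}-\theta_t\|^2$, and applying Fenchel--Young to absorb the $\inner{\tilde{g}_{\varepsilon_t}(\theta_t)}{\theta_t - \theta_{t+1}}$ cross term into $\tfrac{\eta_t^2}{2}\|\tilde{g}_{\varepsilon_t}(\theta_t)\|_*^2$ yields the descent lemma
$$
\eta_t \inner{\tilde{g}_{\varepsilon_t}(\theta_t)}{\theta_t - \nu} \leq \mathcal{B}_\omega(\nu,\theta_t) - \mathcal{B}_\omega(\nu,\theta_{t+1}) + \tfrac{\eta_t^2}{2}\|\tilde{g}_{\varepsilon_t}(\theta_t)\|_*^2.
$$

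Next, I would inject the $\varepsilon_t$-subgradient inequality $f(\theta_t) - f(\nu) \leq \inner{g_{\varepsilon_t}(\theta_t)}{\theta_t - \nu} + \varepsilon_t$ from Definition \ref{def:SAS} and take conditional expectation at $\theta_t$, so that $\tilde{g}_{\varepsilon_t}$ is replaced by $g_{\varepsilon_t}$ on the left and $\mathbb{E}[\|\tilde{g}_{\varepsilon_t}\|_*^2 \mid \theta_t] \leq G^2$ on the right. After tower expectation this becomes
$$
\eta_t \mathbb{E}[f(\theta_t) - f(\nu)] \leq \mathbb{E}[\mathcal{B}_\omega(\nu,\theta_t) - \mathcal{B}_\omega(\nu,\theta_{t+1})] + \tfrac{\eta_t^2 G^2}{2} + \eta_t \varepsilon_t.
$$
For the first claim, summing over $t = 1,\ldots,T$, bounding the telescoping Bregman terms by $R^2$, dividing by $\sum_t \eta_t$, applying Jensen on the averaged iterate and substituting the step-size schedule $\eta_t = c/\sqrt{t}$ delivers the announced $\bigl(\tfrac{R^2}{c}+cG^2\bigr)/\sqrt{T} + \tfrac1T\sum_t \varepsilon_t$ bound.

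For the relatively strongly convex case $f = h + \mathcal{R}$, I would strengthen the descent lemma using $\mathcal{R}(\theta_t) - \mathcal{R}(\nu) \leq \inner{g_\mathcal{R}(\theta_t)}{\theta_t - \nu} - \alpha \mathcal{B}_\omega(\nu,\theta_t)$, which converts the telescoping structure into $(1-\alpha\eta_t)\mathcal{B}_\omega(\nu,\theta_t) - \mathcal{B}_\omega(\nu,\theta_{t+1})$. With $\eta_t = 2/(\alpha(t+1))$ one has $(1-\alpha\eta_t) = (t-1)/(t+1)$, and weighting the per-step inequality by $t$ makes consecutive Bregman terms cancel exactly. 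The residual $\sum_t t\,\eta_t^2\,G^2/2$ normalized by $\tfrac{2}{T(T+1)}$ yields the $\tfrac{2G^2}{\alpha(T+1)}$ term, while the approximation error accumulates as $\tfrac{2}{T(T+1)}\sum_t t\varepsilon_t$; Jensen applied to $\tfrac{2}{T(T+1)}\sum_t t\theta_t$ closes the proof. The main obstacle is this bookkeeping: the three error sources---stochastic noise, $\varepsilon$-approximation, and non-Euclidean geometry---must be combined with weights chosen so that the weighted Bregman telescoping closes exactly, which forces the specific pairing of $\eta_t = 2/(\alpha(t+1))$ with averaging weights $t$; meanwhile the tower property applied to $\mathbb{E}[\tilde{g}_{\varepsilon_t}(\theta_t) \mid \theta_t] = g_{\varepsilon_t}(\theta_t)$ ensures that the randomness of earlier iterations does not disturb the $\varepsilon_t$-subgradient inequality at step $t$.
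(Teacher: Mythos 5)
Your overall strategy coincides with the paper's: the one-step mirror-descent inequality (which the paper simply cites from Juditsky--Nemirovski rather than rederiving via the three-point identity and Fenchel--Young as you do), the $\varepsilon_t$-subgradient slack, and the tower property to pass from $\tilde{g}_{\varepsilon_t}$ to $g_{\varepsilon_t}$ are all handled correctly. The genuine gap is in the aggregation for the first bound. Having multiplied the one-step inequality by $\eta_t$, you sum and divide by $\sum_t\eta_t$. Writing $\Delta_t \coloneqq \mathcal{B}_\omega(\theta^\star,\theta_t)$, this (i) produces a guarantee for the weighted average $\sum_t\eta_t\theta_t/\sum_t\eta_t$ rather than for the uniform average $\frac{1}{T}\sum_t\theta_t$ appearing in the statement (and likewise replaces $\frac{1}{T}\sum_t\varepsilon_t$ by $\sum_t\eta_t\varepsilon_t/\sum_t\eta_t$), and (ii) leaves the residual $\frac{G^2}{2}\sum_t\eta_t^2\big/\sum_t\eta_t$, which for $\eta_t=c/\sqrt{t}$ is of order $cG^2\log T/\sqrt{T}$ --- a $\log T$ factor worse than claimed. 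The paper avoids both problems by keeping the inequality in the form $\inner{\tilde{g}_{\varepsilon_t}(\theta_t)}{\theta_t-\theta^\star}\leq\frac{1}{\eta_t}(\Delta_t-\Delta_{t+1})+\frac{\eta_t}{2}\|\tilde{g}_{\varepsilon_t}(\theta_t)\|_*^2$, summing with uniform weights, and using the rearrangement $\sum_{t=1}^T\frac{1}{\eta_t}(\Delta_t-\Delta_{t+1})\leq\frac{R^2}{\eta_1}+R^2\sum_{t=2}^T\left(\frac{1}{\eta_t}-\frac{1}{\eta_{t-1}}\right)=\frac{R^2}{\eta_T}$, which is valid because $1/\eta_t$ is nondecreasing and $\Delta_t\leq R^2$; this gives $\frac{R^2}{\eta_T}+\frac{G^2}{2}\sum_t\eta_t\leq(R^2/c+cG^2)\sqrt{T}$, and dividing by $T$ before applying Jensen to the uniform average closes the argument.

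The same off-by-one-power-of-$\eta_t$ bookkeeping persists in your strongly convex part: if you attach the weight $t$ to the $\eta_t$-multiplied inequality, the Bregman coefficients $t(1-\alpha\eta_t)$ and $t$ do not cancel across consecutive steps, and your stated residual $\sum_t t\eta_t^2G^2/2=O(\log T/\alpha^2)$ does not normalize to $2G^2/(\alpha(T+1))$. The computation that works --- and is the paper's --- weights the unmultiplied inequality $f(\theta_t)-f(\theta^\star)\leq\left(\frac{1}{\eta_t}-\alpha\right)\Delta_t-\frac{1}{\eta_t}\Delta_{t+1}+\frac{\eta_t}{2}G^2+\varepsilon_t$ by $t$: with $\eta_t=2/(\alpha(t+1))$ the coefficient of $\Delta_t$ is $\alpha t(t-1)/2$ and that of $-\Delta_{t+1}$ is $\alpha t(t+1)/2$, so the sum telescopes exactly and the $t=1$ term vanishes, leaving $\frac{G^2}{2}\sum_t t\eta_t\leq G^2T/\alpha$ plus $\sum_t t\varepsilon_t$; normalizing by $\sum_t t=T(T+1)/2$ and applying Jensen to $\frac{2}{T(T+1)}\sum_t t\theta_t$ yields the stated bound.
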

The convergence rate of the \ref{alg:SAMD} algorithm is a combination of the $O(1/\sqrt{T})$ (or $O(1/T)$) rate of the stochastic mirror descent algorithm plus a term that depends on $\{ \varepsilon_t \}_{t=1}^T$ due to the use of approximate subgradients. Notice that, if we use the SAMD algorithm with errors $\varepsilon_t$ diminishing at a rate $O(1/t)$ (i.e., as $t$ increases, we use increasingly more precise approximate subgradients), then the convergence rates of Proposition \ref{prop:samd_rate} reduce to $O(1/\sqrt{T})$ and $O(1/T)$. The \ref{alg:SAMD} algorithm differs from a simple projected subgradient method in three major ways: it uses (i) mirror descent updates, (ii) stochastic subgradients, and (iii) approximate subgradients. Next, we discuss how each of these properties can be used to exploit the structure of the IO problem \eqref{eq:reg_loss_minimization}.

\subsection{Mirror descent updates}
\label{sec:mirror}

For instance, consider \eqref{eq:reg_loss_minimization} with $\mathcal{R}(\theta) = \|\theta\|_1$ and $\Theta = \mathbb{R}^p$. This problem can be equivalently written as
\begin{equation}
    \label{eq:l1_reformulation}
    \begin{aligned}
    \min_{\theta \in \mathbb{R}^p} \quad & \frac{1}{N}\sum_{i=1}^N \ell_\theta (\hat{s}_i, \hat{x}_i) \\
    \text{s.t.} \ \quad & \tilde{\kappa}\|\theta\|_1 \leq 1,
\end{aligned}
\end{equation}
for some $\tilde{\kappa}$ that depends on $\kappa$ and the data $\{(\hat{s}_i, \hat{x}_i)\}_{i=1}^N$. Then, by introducing the nonnegative variables $\theta^+$ and $\theta^-$, defining $\tilde{\theta} \coloneqq (\theta^+, \theta^-)$ (i.e., the concatenation of $\theta^+$ and $\theta-$), and setting $\theta = \theta^+ - \theta^- = [I \ -I]\tilde{\theta}$ (here, $I$ is the identity matrix and $[I \ -I]$ is a block matrix), \eqref{eq:l1_reformulation} can be written as
\begin{equation}
    \label{eq:l1_reformulation_simplex}
    \min_{\tilde{\theta} \in \Delta_{\tilde{\kappa}}} \frac{1}{N}\sum_{i=1}^N \ell_{[I \ -I]\tilde{\theta}} (\hat{s}_i, \hat{x}_i),
\end{equation}
where $\Delta_{\tilde{\kappa}} \coloneqq \{\tilde{\theta} \in \mathbb{R}^{2p} : \tilde{\kappa}\|\tilde{\theta}\|_1 \leq 1, \tilde{\theta} \geq 0\}$ \citep{tibshirani1996regression}. In other words, \eqref{eq:l1_reformulation} can be recast as an optimization problem over a simplex. Next, choosing $\omega (\theta) = \sum_{i=1}^p \theta_i\log(\theta_i)$, the \ref{alg:SAMD} updates applied to \eqref{eq:l1_reformulation_simplex} can be written as ``exponentiated updates''
$$
\tilde{\theta}_{t+1} = 
\begin{cases}
    \tilde{\theta}_t \odot \exp(-\eta_t \tilde{g}_{\varepsilon_t}(\tilde{\theta}_t)) & \text{if} \quad \tilde{\kappa}\|\tilde{\theta}_t \odot \exp(-\eta_t \tilde{g}_{\varepsilon_t}(\tilde{\theta}_t))\|_1 \leq 1\\
    \frac{\tilde{\theta}_t \odot \exp(-\eta_t \tilde{g}_{\varepsilon_t}(\tilde{\theta}_t))}{\tilde{\kappa}\|\tilde{\theta}_t \odot \exp(-\eta_t \tilde{g}_{\varepsilon_t}(\tilde{\theta}_t))\|_1} & \text{otherwise.}
\end{cases} 
$$
Exponentiated updates are known to have better convergence properties compared to standard subgradient descent updates for ``simplex constrained'' problems, as well as not requiring solving optimization problems to project onto the simplex. For a detailed discussion on the advantages of mirror descent updates for different settings, see \citep[Section 5.7]{juditsky2011first_i}. In general, by choosing $\omega (\theta) = \frac{1}{2}\|\theta\|_2^2$,  the \ref{alg:SAMD} updates reduce to standard project subgradient updates
$$
\theta_{t+1} = \Pi_\Theta(\theta_t - \eta_t \tilde{g}_{\varepsilon_t}(\theta_t)),
$$
that is, the subgradient method can be seen as a special case of the mirror descent algorithm.

\subsection{Stochastic subgradients}
\label{sec:stochastic_subgradients}

Using stochastic subgradients is advantageous when computing a stochastic subgradient is computationally cheaper than computing a deterministic subgradient. This observation is supported by the fact that, in expectation, the stochastic subgradient method guarantees the same convergence rate as its deterministic version \cite[Theorem 6.1]{bubeck2015convex}. For the IO loss minimization problem \eqref{eq:reg_loss_minimization}, computing a stochastic subgradient can be significantly cheaper than computing the full subgradient, due to the finite sum structure of \eqref{eq:reg_loss_minimization}. Namely, by Danskin's theorem \cite[Section B.5]{bertsekas2008nonlinear}, we have that the subdifferential of the \ref{eq:ASL} w.r.t. $\theta$ is
\begin{equation}
\label{eq:subdifferential}
    \partial \ell_\theta(\hat{s},\hat{x}) = \text{conv}\left\{ \phi(\hat{s},\hat{x}) - \phi(\hat{s},x^\star(\hat{s})) \ \bigg| \ x^\star(\hat{s}) \in \argmax_{x \in \mathbb{X}(\hat{s})} \big\{d(\hat{x},x) - \inner{\theta}{\phi(\hat{s},x)} \big\}\right\}.
\end{equation}
Thus, to compute a subgradient of $\frac{1}{N}\sum_{i=1}^N \ell_\theta (\hat{s}_i, \hat{x}_i)$, we need to solve $N$ maximization problems, one for each signal-response pair $\{(\hat{s}_i,\hat{x}_i)\}_{i=1}^N$. On the other hand, by sampling an index $j$ uniformly from $[N]$, we have that $g_j(\theta) \in \partial \ell_\theta(\hat{s}_j,\hat{x}_j)$ is a stochastic subgradient of $\frac{1}{N}\sum_{i=1}^N \ell_\theta (\hat{s}_i, \hat{x}_i)$, that is, $\mathbb{E}_{j \sim [N]}[g_j(\theta) \ | \ \theta] = (1/N)\sum_{i=1}^N g_i(\theta)$. This is a standard method for computing stochastic (sub)gradients for empirical risk minimization-type problems (e.g., \cite[Chapter 6]{bubeck2015convex}). In summary, to compute an unbiased stochastic subgradient of \eqref{eq:reg_loss_minimization}, instead of $N$, we need to solve only \textit{one} maximization problem.

\subsection{Approximate subgradients}
\label{sec:approximate_subgradients}

As shown in \eqref{eq:subdifferential}, we need to solve the optimization problem $\max_{x \in \mathbb{X}(\hat{s})} \left\{ d(\hat{x},x) - \inner{\theta}{\phi(\hat{s},x)} \right\}$ in order to compute a subgradient of the \ref{eq:ASL}. However, in practice, it may be too costly to solve this optimization problem to optimality at each iteration of the algorithm. Thus, it would be useful if we could use an approximate solution to this problem, instead of an optimal one. Turns out that, indeed, given an \textit{$\varepsilon$-approximate solution} to the maximization problem, that is, a feasible point $x_\varepsilon$ such that
$$
d(\hat{x},x_\varepsilon) - \inner{\theta}{\phi(\hat{s},x_\varepsilon)} \geq \max_{x \in \mathbb{X}(\hat{s})} \left\{ d(\hat{x},x) - \inner{\theta}{\phi(\hat{s},x)} \right\} - \varepsilon,
$$
we can construct an $\varepsilon$-subgradient of the \ref{eq:ASL}. 

\begin{lemma}[Approximate solutions and $\varepsilon$-subgradients]
\label{lemma:e-subgradient}
Let $x_\varepsilon$ be a feasible, $\varepsilon$-suboptimal solution of $\max_{x \in \mathbb{X}(\hat{s})} \left\{ d(\hat{x},x) - \inner{\theta}{\phi(\hat{s},x)} \right\}$. Thus, the vector $g_\varepsilon(\theta) = \phi(\hat{s},\hat{x}) - \phi(\hat{s},x_\varepsilon)$ is an $\varepsilon$-subgradient of $\ell_{\theta}(\hat{s},\hat{x})$ with respect to $\theta$, i.e.,
$$
\ell_{\theta}(\hat{s},\hat{x}) - \ell_{\nu}(\hat{s},\hat{x}) \leq \inner{\phi(\hat{s},\hat{x}) - \phi(\hat{s},x_\varepsilon)}{\theta - \nu} + \varepsilon.
$$
\end{lemma}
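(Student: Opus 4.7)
The plan is to prove the inequality directly from the definition of the \ref{eq:ASL} by combining two elementary bounds, one at the point $\theta$ and one at the point $\nu$. Observe that the \ref{eq:ASL} is defined as a maximum, so at $\theta$ we get an upper bound via $\varepsilon$-suboptimality of $x_\varepsilon$, and at $\nu$ we get a lower bound via feasibility of $x_\varepsilon$.

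More concretely, the first step is to rewrite the $\varepsilon$-suboptimality assumption on $x_\varepsilon$. Since
\[
\ell_\theta(\hat{s},\hat{x}) = \inner{\theta}{\phi(\hat{s},\hat{x})} + \max_{x \in \mathbb{X}(\hat{s})} \{d(\hat{x},x) - \inner{\theta}{\phi(\hat{s},x)}\},
\]
the $\varepsilon$-suboptimality of $x_\varepsilon$ for the inner maximization translates directly to
\[
\ell_\theta(\hat{s},\hat{x}) \;\leq\; \inner{\theta}{\phi(\hat{s},\hat{x}) - \phi(\hat{s},x_\varepsilon)} + d(\hat{x},x_\varepsilon) + \varepsilon.
\]
The second step is the lower bound at $\nu$. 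Since $x_\varepsilon \in \mathbb{X}(\hat{s})$ is feasible, it is in particular admissible in the maximization defining $\ell_\nu(\hat{s},\hat{x})$, so
\[
\ell_\nu(\hat{s},\hat{x}) \;\geq\; \inner{\nu}{\phi(\hat{s},\hat{x}) - \phi(\hat{s},x_\varepsilon)} + d(\hat{x},x_\varepsilon).
\]

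The final step is to subtract the two displays. The $d(\hat{x},x_\varepsilon)$ terms cancel, and the remaining inner products combine to $\inner{\phi(\hat{s},\hat{x}) - \phi(\hat{s},x_\varepsilon)}{\theta - \nu}$, which together with the leftover $\varepsilon$ yields exactly the claimed $\varepsilon$-subgradient inequality. There is no real obstacle here: the proof is essentially a two-line consequence of the max-structure of the \ref{eq:ASL}, and one can view it as a mild generalization of the standard derivation (via Danskin's theorem, cf.\ \eqref{eq:subdifferential}) of exact subgradients, where exact optimality of the inner maximizer is replaced by $\varepsilon$-optimality and the corresponding slack is absorbed into the $\varepsilon$ term in the subgradient inequality.
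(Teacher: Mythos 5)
Your proposal is correct and is essentially the same argument as the paper's proof: bound $\ell_\theta$ from above using the $\varepsilon$-suboptimality of $x_\varepsilon$, bound $\ell_\nu$ from below using its feasibility in the max defining $\ell_\nu$, and subtract. The paper merely writes the two bounds as a single chain of inequalities rather than as two separate displays.
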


In summary, we can use an approximate solution to the inner maximization problem of the \ref{eq:ASL} to construct $\varepsilon$-subgradients, and as proved in Proposition \ref{prop:samd_rate}, these can be effectively used to compute an approximate solution of convex optimization problems. Finally, it is not difficult to show that by combining stochastic subgradients (i.e., by using only one random signal-response pair) and approximate subgradients (i.e., solving the respective maximization problem approximately), one gets a stochastic approximate subgradient (Definition \ref{def:SAS}). Putting these ideas together, Algorithm~\ref{alg:IO_SAMD} shows the pseudocode of the \ref{alg:SAMD} algorithm applied to problem \eqref{eq:reg_loss_minimization}, where we denote the gradient (or a subgradient) of $\mathcal{R}$ as $\nabla \mathcal{R}$. In line 3 of Algorithm~\ref{alg:IO_SAMD}, one example is sampled from the dataset for each iteration of the algorithm. However, in practice, it may be advantageous to instead sample a batch of $1 \leq B \leq N$ examples, and use this batch of data to compute the approximate stochastic subgradient. By changing the size $B$ of the batch, we can control the trade-off between having more precise subgradients (large $B$) versus faster subgradient computations (small $B$). This idea is explored in the numerical experiments of Section \ref{sec:iter_algs}.

\begin{algorithm}
\setstretch{1.2}
\caption{\ref{alg:SAMD} algorithm for \eqref{eq:reg_loss_minimization}}
\label{alg:IO_SAMD}
\begin{algorithmic}[1]
\State \textbf{Input:} Step-size sequence $\{ \eta_t \}_{t=1}^T$, $\kappa$, $\theta_1$, $d$, $\phi$, $\omega$, $\nabla\mathcal{R}$ and dataset $\{(\hat{s}_i, \hat{x}_i)\}_{i=1}^N$.
\For{$t=1, \ldots, T$}
\State Sample $j$ uniformly from $\{1, \ldots, N\}$
\State Compute $x_t$, a (possibly approximate) solution of $\max_{x \in \mathbb{X}(\hat{s}_j)} \big\{d(\hat{x}_j,x) - \inner{\theta_t}{\phi(\hat{s}_j,x)} \big\}$
\State Approximate stochastic subgradient: $\tilde{g}_t(\theta_t) = \kappa \nabla\mathcal{R}(\theta_t) + \phi(\hat{s}_j,\hat{x}_j) - \phi(\hat{s}_j,x_t)$
\State Mirror descent step: $\theta_{t+1} = \argmin_{\theta \in \Theta} \big\{ \eta_t \inner{\tilde{g}_t(\theta_t)}{\theta} + \mathcal{B}_\omega(\theta,\theta_t)\big\}$
\EndFor
\State \textbf{Output:} $\{\theta_t\}_{t=1}^T$
\end{algorithmic}
\end{algorithm}

We end this section by briefly discussing the case of \textit{online IO}. In this scenario, instead of having a dataset of signal-response data upfront, we receive one signal-response pair at a time. After receiving each signal-response data pair, we must choose a cost vector $\theta_t$ using all the information gathered so far, and evaluate the loss of this cost vector. The final performance of the algorithm is measured using \textit{regret} performance metrics.
\begin{remark}[Regret bounds and online IO]
\cite{barmann2017emulating} use an online Multiplicative Weights Updates (MWU) algorithm to prove an $O(\sqrt{T})$ regret bound, and more recently, \cite{besbes2023contextual} use an online adaptation of the circumcenter concept to prove an $O(\log(T))$ regret bound. In Algorithm \ref{alg:IO_SAMD}, line 3, if instead of sampling a new signal-response example we use the online data received at that iteration, Algorithm \ref{alg:IO_SAMD} can be readily adapted to online IO problems. Moreover, the convergence bound of Proposition \ref{prop:samd_rate} may also be straightforwardly converted to an $O(\sqrt{T})$ regret bound (or $O(\log(T))$ for the strongly convex case) \cite[Chapter 3]{orabona2019modern}. In particular, Algorithm \ref{alg:IO_SAMD} can be interpreted as a generalization of the MWU algorithm \cite[Appendix A.2]{allen2014linear}.
\end{remark}

%%%%%%%%%%%%%%%%%%%%%%%%%%%%%%%%%%%%%%%%%%%%%%%%%%%%%%%%%%%%%%%%%%%%

\section{Numerical Experiments}
\label{sec:numerical}

In this section, we numerically evaluate the approaches to inverse optimization proposed in this paper. All linear and quadratic programs were solved using Gurobi. All semidefinite programs were solved using CVXPY with MOSEK as the solver. For all results presented in this section, we learn the expert's cost function using a training dataset of expert signal-response data and evaluate its performance using a test dataset (i.e., out-of-sample performance). Moreover, in every plot, we report the average performance value for 10 randomly generated true cost vectors, as well as the 5th and 95th percentile bounds. In Appendix \ref{app:in_sample_results}, the in-sample results are reported, complementing the ones from this section. All of our experiments are reproducible and are part of the examples in the InvOpt Python package \citep{zattoniscroccaro2023invopt}.

\subsection{Consistent data}
\label{sec:consistent_out}

In this section, we numerically evaluate the approaches discussed in Section \ref{sec:center} for IO problems with consistent data.

\textbf{Decision problem of the expert.} To generate its decisions, the expert solves the binary linear program
\begin{equation}
\label{eq:BLP}
\begin{aligned}
\min_{x} \quad & \langle \theta, x \rangle \\
\text{s.t.} \quad & Ax \leq b \\
& x \in \{0,1\}^n,
\end{aligned}
\end{equation}
where $\theta \in \mathbb{R}^n_+$ is the cost vector, $A \in \mathbb{R}^{t \times n}$, $b \in \mathbb{R}^t$, and $x$ is the decision vector. Although the decision problem \eqref{eq:BLP} is presented as a general binary linear program, many real-world problems can be modeled within this class of optimization problems. As a motivating example, we briefly discuss the problem of modeling consumer behavior, and how it fits \eqref{eq:BLP}. In this problem, the expert is a consumer, who given some contextual information, decides which products to buy from a set of $n$ products. That is, each component of the decision vector $x \in \{0, 1\}^n$ corresponds to one of the $n$ products, and equals $1$ if the consumer buys it, and $0$ otherwise. The signal with contextual information could be, for instance, $\hat{s} = (A, b)$, where each component of $A \in \mathbb{R}^n$ corresponds to the price of each product, and $b \in \mathbb{R}$ corresponds to the total budget of the consumer. In this case, the constraint $Ax \leq b$ simply represents the budget constraint of the consumer. Finally, each component of the cost vector $\theta \in \mathbb{R}^n$ represents (the negative of) the utility the consumer assigns to each product, and by minimizing $\langle \theta, x \rangle$, the goal of the consumer is to buy the set of products that maximizes its utility, while respecting its budget constraint. Thus, in this context, solving the IO problem translates to learning the utility function underlying the actions of a consumer agent.

\textbf{Data generation.} To generate training and test data, we sample cost vectors uniformly from $\{\theta \in \mathbb{R}^n : 0 \leq \theta \leq \mathbbm{1}\}$, we sample $\hat{A}$ uniformly from $\{A \in \mathbb{R}^{t \times n} : -1 \leq A \leq 0\}$ and $\hat{b}$ uniformly from $\{b \in \mathbb{R}^t : -\mathbbm{1} \leq b \leq 0\}$. To make sure the problem instances are feasible, we check if the sum of each row of $\hat{A}$ is larger than the respective component of $\hat{b}$, which ensures that $x = (1, \ldots, 1)$ is a feasible solution of \eqref{eq:BLP}. Given a signal $\hat{s} = (\hat{A}, \hat{b})$, we generate a response $\hat{x}$ by solving \eqref{eq:BLP}, i.e., $\hat{x} \in \argmin_{x \in \mathbb{X}(\hat{s})} \langle \theta, x \rangle$. To evaluate the IO approaches, we generate 10 random cost vectors $\theta_{\text{true}}$. For each of these cost vectors, we generate a training dataset $\widehat{\mathcal{D}}_{\text{train}} = \{(\hat{s}_i, \hat{x}_i)\}_{i=1}^N$ and a test dataset $\widehat{\mathcal{D}}_{\text{test}} = \{(\hat{s}_i, \hat{x}_i)\}_{i=1}^N$, with $N=100$, $n=6$ and $t=4$.

\textbf{IO approaches.} We compare seven approaches to choose a vector from the set of consistent cost vectors \eqref{eq:consistent}:
\begin{itemize}
    \item \textbf{Feasibility}: we use \eqref{eq:feasibility} with $\|\theta\|_1 = 1$, $s = (A, b)$, $\mathbb{X}(s) = \{x \in \{0,1\}^n : Ax \leq b\}$, $\phi(x,s) = x$, augmented with the constraint $\theta \in \Theta = \{\theta \in \mathbb{R}^n : \theta \geq 0\}$;
    \item \textbf{Incenter}: we use \eqref{eq:incenter_gen} with $\mathcal{R}(\theta) = \frac{1}{2}\|\theta\|_2^2$, $s = (A, b)$, $\mathbb{X}(s) = \{x \in \{0,1\}^n : Ax \leq b\}$, $\phi(x,s) = x$, $d(\hat{x}_i,x_i) = \|\hat{x}_i - x_i\|_2$, and $\Theta = \{\theta \in \mathbb{R}^n : \theta \geq 0\}$;
    \item \textbf{Circumcenter}: we solve the following optimization problem:
    \begin{equation}
        \label{eq:circumcenter3}
        \begin{aligned}
        \min_{\|\theta\|_2 = 1} & \quad r \\
        \text{s.t.} \ & \ \max_{\substack{\tilde{\theta} \in \mathbb{C} \\ \|\tilde{\theta}\|_2 = 1}} \|\theta - \tilde{\theta}\|_2^2 \leq r,
        \end{aligned}
    \end{equation}
    which is an epigraph reformulation of the circumcenter problem (see the proof of Theorem \ref{theo:np_hard_circumcenter}). To solve this optimization problem, we substitute the max constraint by the constraints $\| \theta_E - \tilde{\theta} \|_2^2 \leq r, \ \forall \theta_E \in E$, where $E$ is the set of normalized extreme points of $\mathbb{C}$;
    \item \textbf{Ellip. incenter}: we solve \eqref{eq:ellipsoidal_incenter}, the ellipsoidal version of the incenter concept. In our implementation, we use the convex constraint $\|\theta\| \leq 1$ instead of $\|\theta\| = 1$, since one can show that the constraint $\|\theta\| \leq 1$ will always be tight at an optimum when $\text{int}(\mathbb{C}) \neq \emptyset$;
    \item \textbf{Ellip. circumcenter}: we solve
    \begin{equation*}
        \begin{aligned}
        \max_{A \succcurlyeq 0, \theta} & \quad \log\det (A) \\
        \text{s.t.} \ & \ \max_{\substack{\tilde{\theta} \in \mathbb{C} \\ \|\tilde{\theta}\|_2 = 1}} \|A(\tilde{\theta} - \theta)\|_2^2 \leq 1,
        \end{aligned}
    \end{equation*}
    which is an ellipsoidal generalization of the circumcenter reformulation \eqref{eq:circumcenter3};
    \item \textbf{Cutting plane}: we use the cutting plane algorithm of \citep{wang2009cutting};
    \item \textbf{Predictability loss}: we use the predictability loss of \citep{aswani2018inverse} (Appendix \ref{app:theoretical}).
\end{itemize}

\textbf{Results.} Figure \ref{fig:consistent_out} shows the results for this scenario. To make the comparisons consistent, before evaluating the results, all cost vectors are normalized. For all the plots, the x-axis refers to the number of training examples used to compute the results. The idea is to evaluate how efficient these approaches are with respect to the amount of data fed to them. Figure \ref{fig:consistent_theta_diff} shows the difference between the cost vector returned by the IO approaches (which we name $\theta_{\text{IO}}$) and the cost vector used to generate the data ($\theta_{\text{true}}$). As expected, the more data we feed to the approaches, the closer $\theta_{\text{IO}}$ tends to get from $\theta_{\text{true}}$. Comparing them, the incenter, ellipsoidal incenter, and ellipsoidal circumcenter approaches have similar performance and clearly outperform the other approaches. Figure \ref{fig:consistent_x_diff_out} shows the average difference between the optimal decision of \eqref{eq:BLP} using $\theta_{\text{IO}}$ (which we name $x_{\text{IO}}$) and the decision in the test dataset (which we name $x_{\text{true}}$). Again, we see that the incenter, ellipsoidal incenter, and ellipsoidal circumcenter approaches present the best performance. Figure \ref{fig:consistent_obj_diff_out} shows the normalized difference between the cost of the expert decisions and the cost of the decisions using $\theta_{\text{IO}}$. More precisely, we define $\text{Cost}_{\text{IO}} \coloneqq \sum_{i=1}^N \langle \theta_{\text{true}}, x_{\text{IO},i} \rangle$ and $\text{Cost}_{\text{true}} \coloneqq \sum_{i=1}^N \langle \theta_{\text{true}}, \hat{x}_i \rangle$ and compare the relative difference between them. Notice that this difference will always be nonnegative by the optimality of $\hat{x}_i$. Once again, the incenter, ellipsoidal incenter, and ellipsoidal circumcenter approaches outperform the other approaches.

Moreover, Table \ref{table:time_consistent} shows the time it took to generate the results of this section for each approach. As can be seen, even though the incenter, ellipsoidal incenter, and ellipsoidal circumcenter approaches show similar performance, the incenter approach is at least one order of magnitude faster to compute. Although these numbers depend on the actual implementation of each IO approach, such a difference in solving time is expected, since the incenter problem can be formulated as a quadratic program, whereas the ellipsoidal approaches involve semidefinite constraints. Finally, regarding the main driver for improved out-of-sample performance of the Incenter and the ellipsoidal approaches, one possible explanation is that these approaches optimize for a vector in the interior of $\mathbb{C}$, that is, away from the boundaries of this set. This conclusion is based on the following observations: recall the intuition for the incenter vector provided in Remark \ref{remark:robustness}, also visualized in Figure \ref{fig:incenter}. That is, the idea behind the incenter is to find the vector furthest away from the boundaries of the set $\mathbb{C}$. Also, recall that \cite{besbes2023contextual} showed that circumcenter concept fails for online IO problems precisely because, in the worst case, the circumcenter vector can lie exactly in the boundary of the set $\mathbb{C}$ \cite[Figure 2]{besbes2023contextual}, and the ellipsoidal circumcenter addresses this problem since its circumcenter lies inside the set $\mathbb{C}$ \cite[Figure 3]{besbes2023contextual}. Therefore, the incenter, ellipsoidal incenter, and ellipsoidal circumcenter in some sense optimize for vector in the interior of $\mathbb{C}$, and as shown in Figure \ref{fig:consistent_out}, achieve similar out-of-sample performance in this offline IO experiment (although the incenter vector is computationally cheaper to compute).

\begin{table}
\centering
\begin{tabular}{c | c c c c c c c c c c c c c c c c} 
\textbf{Approach} & Feasibility & Incenter & Circumcenter & Ellip. incenter \\
\hline
\textbf{Time (seconds)} & 69 & 98 & 1323 & 915 \\
\textbf{Approach} & Ellip. circumcenter & Cutting plane & Predictability loss \\
\hline
\textbf{Time (seconds)} & 1345 & 206 & 236 \\
\end{tabular}
\caption{Computational time to generate the results of Figure \ref{fig:consistent_out}.}
\label{table:time_consistent}
\end{table}

\begin{figure}
\centering
\captionsetup[subfigure]{width=0.96\linewidth}%
    \begin{subfigure}[t]{0.32\linewidth}
        \includegraphics[width = \linewidth]{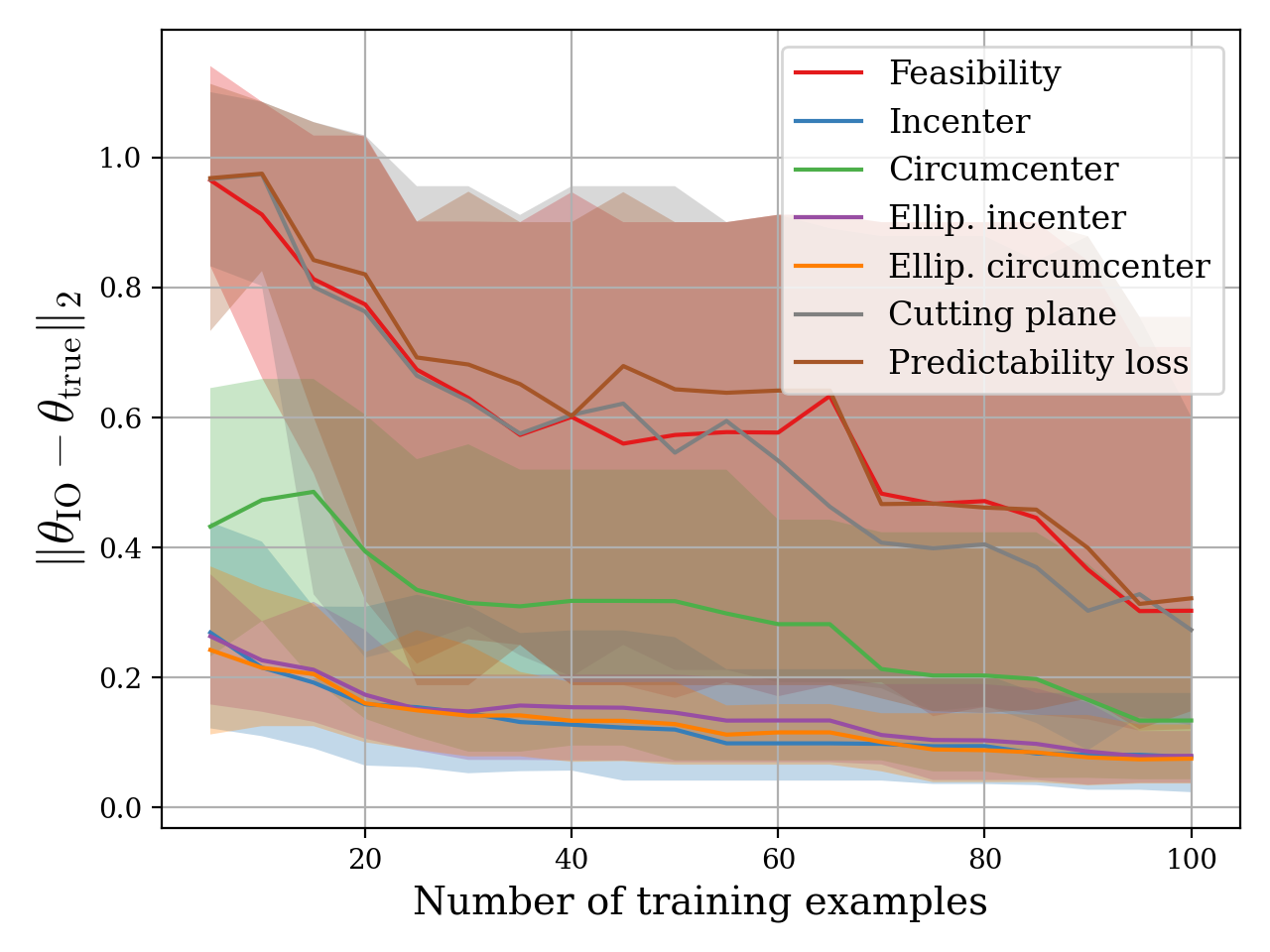}
        \caption{Difference between the true cost vector ($\theta_{\text{true}}$) and the one learned using IO ($\theta_{\text{IO}}$).}
        \label{fig:consistent_theta_diff}
    \end{subfigure}
    \begin{subfigure}[t]{0.32\linewidth}
        \includegraphics[width = \linewidth]{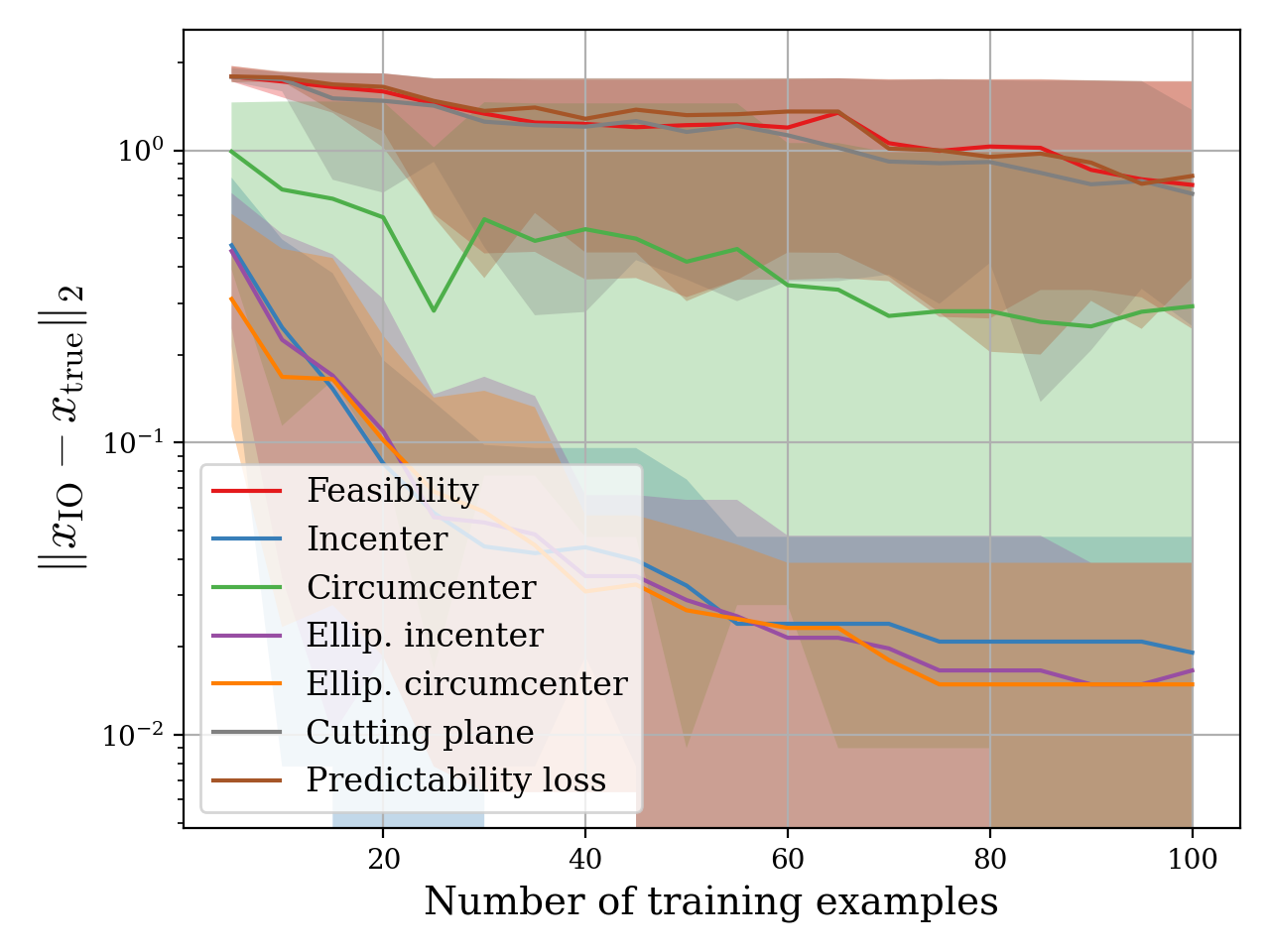}
        \caption{Average error between the decision generated by $\theta_{\text{true}}$ and $\theta_{\text{IO}}$.}
        \label{fig:consistent_x_diff_out}
    \end{subfigure}
    \begin{subfigure}[t]{0.32\linewidth}
        \includegraphics[width = \linewidth]{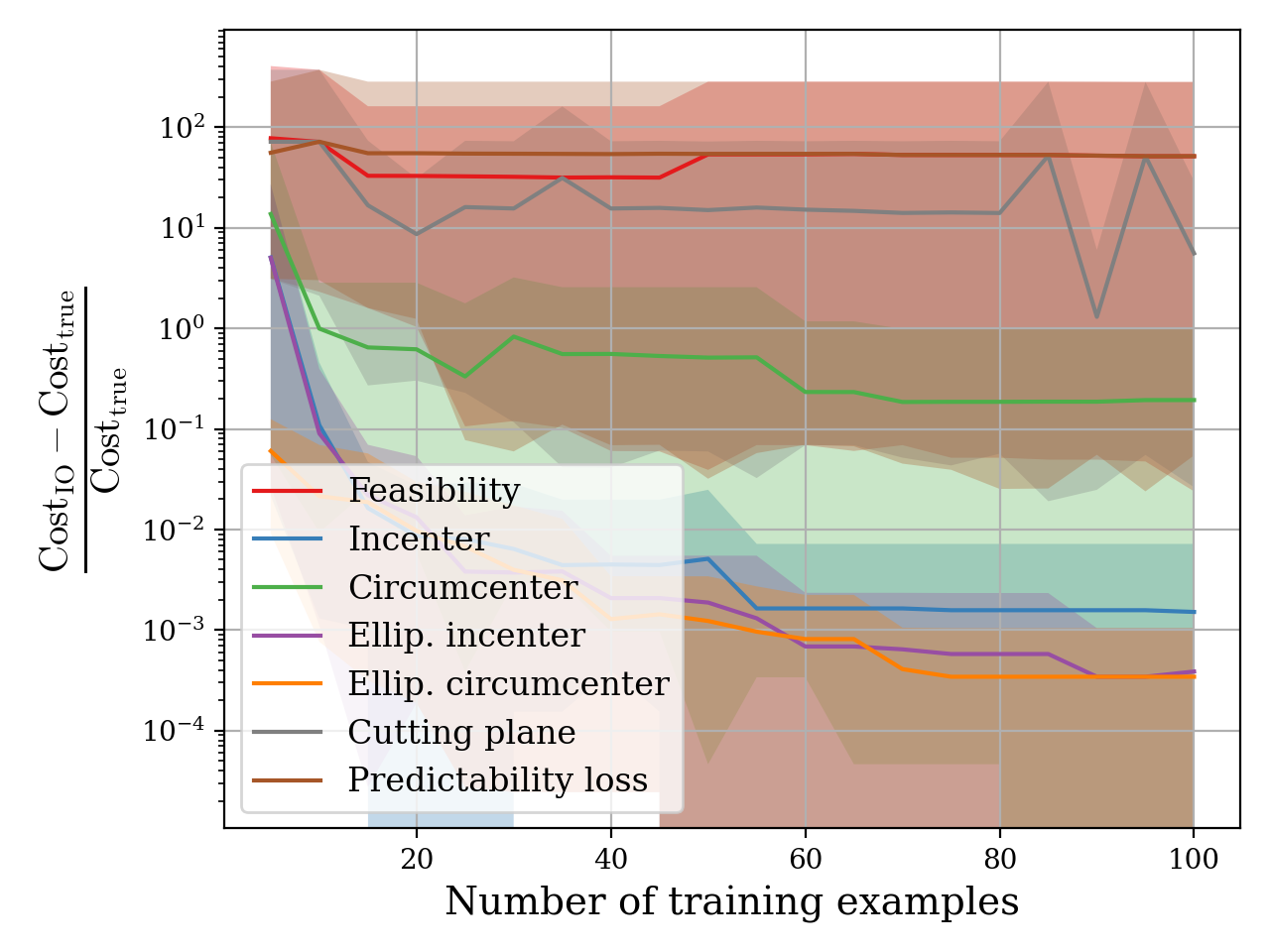}
        \caption{Relative difference between the cost of the decisions generated using $\theta_{\text{true}}$ and $\theta_{\text{IO}}$.}
        \label{fig:consistent_obj_diff_out}
    \end{subfigure}
\caption{Out-of-sample results for consistent data scenario.}
\label{fig:consistent_out}
\end{figure}

\subsection{Inconsistent data}
\label{sec:inconsistent_data}

In this section, we numerically evaluate the approaches discussed in Section \ref{sec:ASL} for IO problems with inconsistent data.

\textbf{Decision problem of the expert.} To generate its decisions, the expert solves the binary linear program \eqref{eq:BLP}, where $\theta \in \mathbb{R}^n$ is the cost vector, $A \in \mathbb{R}^{t \times n}$ and $b \in \mathbb{R}^t$, and $x$ is the decision vector. Notice that different from the previous section, we do not assume the cost vector $\theta$ is nonnegative.

\textbf{Data generation.} To generate training and test data, we sample cost vectors uniformly from $\{\theta \in \mathbb{R}^n : -\mathbbm{1} \leq \theta \leq \mathbbm{1}\}$, we sample $\hat{A}$ uniformly from $\{A \in \mathbb{R}^{t \times n} : -1 \leq A \leq 1\}$ and $\hat{b}$ uniformly from $\{b \in \mathbb{R}^t : -\mathbbm{1} \leq b \leq 0\}$. After generating a signal $\hat{s} = (\hat{A}, \hat{b})$, we check if $\mathbb{X}(\hat{s})$ is nonempty to ensure the problem instance is feasible. To generate the response vectors $\hat{x}$ for the training datasets, we solve problem \eqref{eq:BLP} with noise added to the cost vector, i.e., $\hat{x} \in \argmin_{x \in \mathbb{X}(\hat{s})} \langle \theta + w, x \rangle$, where $w \in \mathbb{R}^n$ is a random vector with components sampled from a normal distribution with zero mean and standard deviation equal to $0.05$. This means that different from the consistent data case, (most probably) there will be no single cost vector consistent with the entire training dataset. To evaluate the IO approaches, we generate 10 random cost vectors $\theta_{\text{true}}$. For each of these cost vectors, we generate a training dataset $\widehat{\mathcal{D}}_{\text{train}} = \{(\hat{s}_i, \hat{x}_i)\}_{i=1}^N$ (by solving the noisy version of \eqref{eq:BLP}, with a different noise vector $w$ for each signal-response pair) and a test dataset $\widehat{\mathcal{D}}_{\text{test}} = \{(\hat{s}_i, \hat{x}_i)\}_{i=1}^N$ (noiseless), with $N=100$, $n=10$ and $t=8$.

\textbf{IO approaches.} We compare five IO approaches for this problem:
\begin{itemize}
    \item \textbf{Suboptimality Loss (SL)}: we use \eqref{eq:reg_loss_minimization} with $\kappa=0$, $\Theta = \mathbb{R}^n$, $s = (A, b)$, $\mathbb{X}(s) = \{x \in \{0,1\}^n : Ax \leq b\}$, $\phi(x,s) = x$, and the suboptimality loss $\ell_\theta (\hat{s},\hat{x}) = \max_{x \in \mathbb{X}(\hat{s})} \left\{ \inner{\theta}{\phi(\hat{s},\hat{x}) - \phi(\hat{s},x)} \right\}$. To prevent the trivial solution $\theta=0$, we add a norm equality constraint $\|\theta\|_\infty = 1$, and solve $2n$ linear programs, one for each facet of the $\infty$-norm unit sphere \citep{esfahani2018data};
    \item \textbf{Augmented Suboptimality Loss (ASL)}: we use \eqref{eq:reg_loss_minimization} with the \ref{eq:ASL}, $\kappa=0.001$, $\mathcal{R}(\theta) = \frac{1}{2}\|\theta\|_2^2$, $s = (A, b)$, $\mathbb{X}(s) = \{x \in \{0,1\}^n : Ax \leq b\}$, $\phi(x,s) = x$, $d(\hat{x}_i,x_i) = \|\hat{x}_i - x_i\|_2$, and $\Theta = \mathbb{R}^n$;
    \item \textbf{Ellipsoidal ASL}: we solve
    \begin{equation}
    \label{eq:ellipsoidal_ASL}
        \begin{aligned}
        \min_{A, \theta, \beta_1, \ldots, \beta_N} \quad & -\kappa \log \left( \det \left( A  \right)\right) + \frac{1}{N}\sum_{i=1}^N \beta_i \\
        \text{s.t.} \ \quad \quad & \inner{\theta}{\phi(\hat{s}_i,\hat{x}_i) - \phi(\hat{s}_i,x_i)} + \| A(\phi(\hat{s}_i,x_i) - \phi(\hat{s}_i,\hat{x}_i)) \|_2 \leq \beta_i \quad \forall x_i \in \mathbb{X}(\hat{s}_i), \ \forall i \in [N] \\
        &\|\theta\|_2 \leq 1, \quad A \succcurlyeq 0,
    \end{aligned}
    \end{equation}
    which is an ellipsoidal generalization of \eqref{eq:incenter_gen_inconsistent} (see Remark \ref{remark:extremal_volume}).
    \item \textbf{Cutting plane}: we use the cutting plane method of \citep{bodur2022inverse}, which is a extension of the cutting-plane algorithm of \citep{wang2009cutting} for IO problems with inconsistent data;
    \item \textbf{Predictability loss}: we use the predictability loss of \citep{aswani2018inverse} (Appendix \ref{app:theoretical}).
\end{itemize}
Notice that we do not test circumcenter-based approaches since they are not defined for the inconsistent data case, that is, when $\mathbb{C} \setminus \{0\} = \emptyset$.

\textbf{Results.} Figure \ref{fig:inconsistent_out} shows the results this scenario. The discussion on the interpretation of each performance metric presented in Figure \ref{fig:inconsistent_out} mirrors the one of Figure \ref{fig:consistent_out}. Importantly, the approach based on the \ref{eq:ASL} and its ellipsoidal version outperforms the other approaches. Moreover, as expected, due to the noise in the training data, one can see that $\theta_{\text{IO}}$ was not able to reproduce the behavior of the expert as well as in the consistent (i.e., noiseless) case. In Table \ref{table:time_consistent} we show the time it took to generate the results of this section for each approach. As can be seen, even though the \ref{eq:ASL} and Ellipsoidal ASL show similar performance, the \ref{eq:ASL} approach is more computationally efficient. Again, this difference is because the Ellipsoidal ASL involves semidefinite constraints (Eq. \eqref{eq:ellipsoidal_ASL}), which is not the case for the standard \ref{eq:ASL} (Eq. \eqref{eq:incenter_gen_inconsistent}).

\begin{table}
\centering
\begin{tabular}{c | c c c c c} 
\textbf{Approach} & SL & ASL & Ellipsoidal ASL & Cutting plane & Predictability loss \\
\hline
\textbf{Time (seconds)} & 230 & 241 & 351 & 440 & 6192
\end{tabular}
\caption{Computational time to generate the results of Figure \ref{fig:inconsistent_out}.}
\label{table:time_inconsistent}
\end{table}

\begin{figure}
\centering
\captionsetup[subfigure]{width=0.96\linewidth}%
    \begin{subfigure}[t]{0.32\linewidth}
        \includegraphics[width = \linewidth]{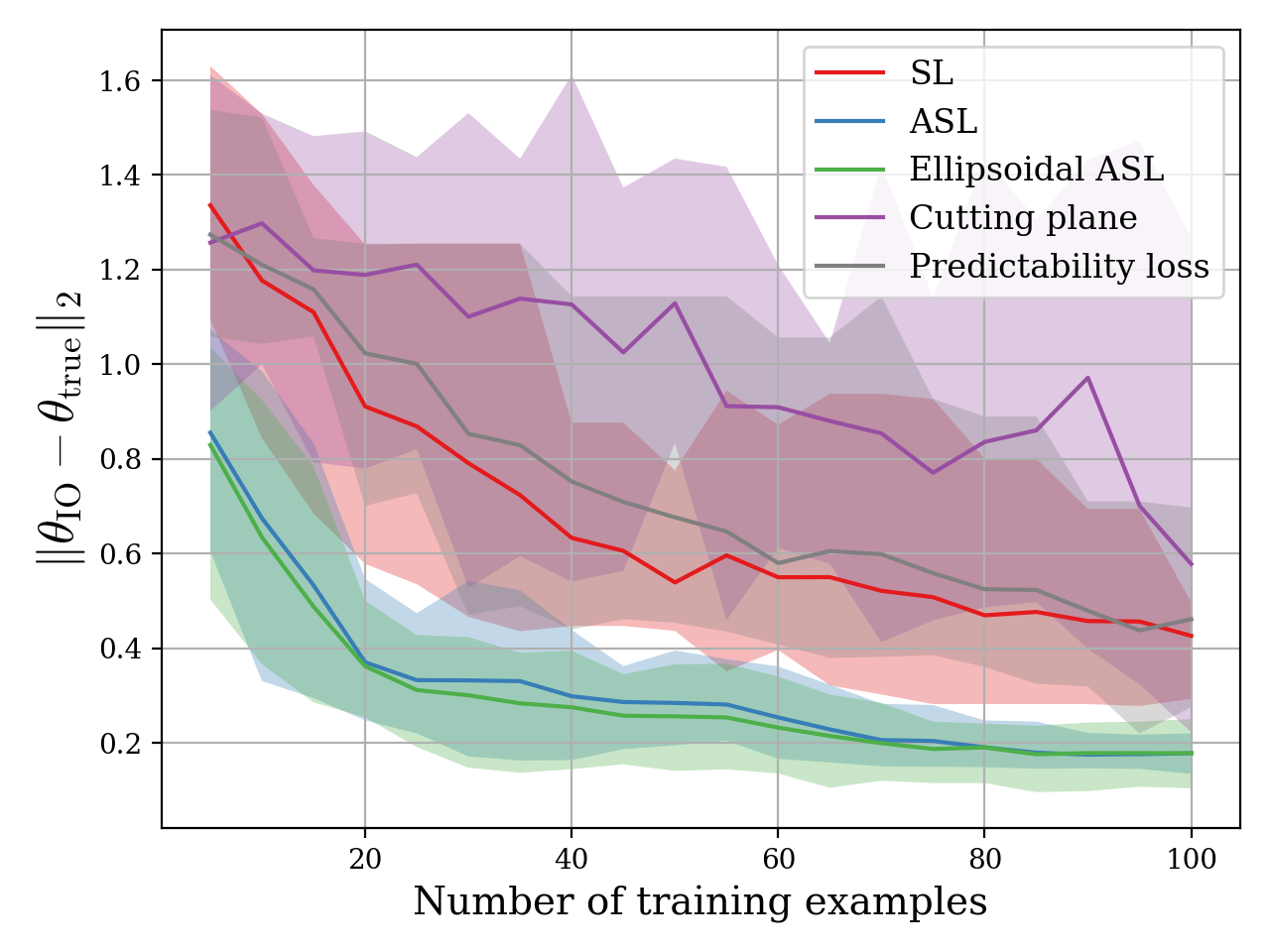}
        \caption{Difference between the true cost vector ($\theta_{\text{true}}$) and the one learned using IO ($\theta_{\text{IO}}$).}
        \label{fig:inconsistent_theta_diff}
    \end{subfigure}
    \begin{subfigure}[t]{0.32\linewidth}
        \includegraphics[width = \linewidth]{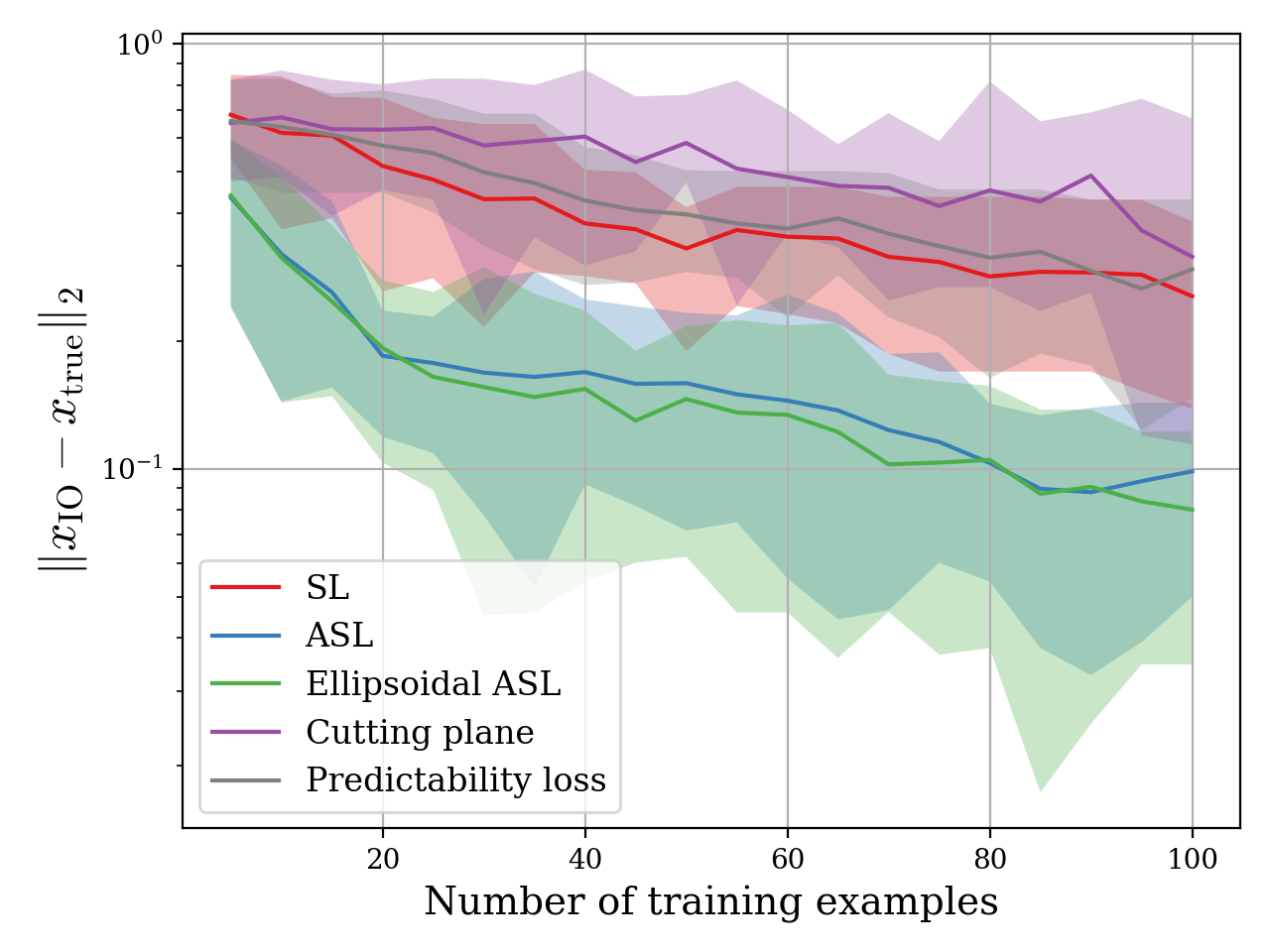}
        \caption{Average error between the decision generated by $\theta_{\text{true}}$ and $\theta_{\text{IO}}$.}
        \label{fig:inconsistent_x_diff_out}
    \end{subfigure}
    \begin{subfigure}[t]{0.32\linewidth}
        \includegraphics[width = \linewidth]{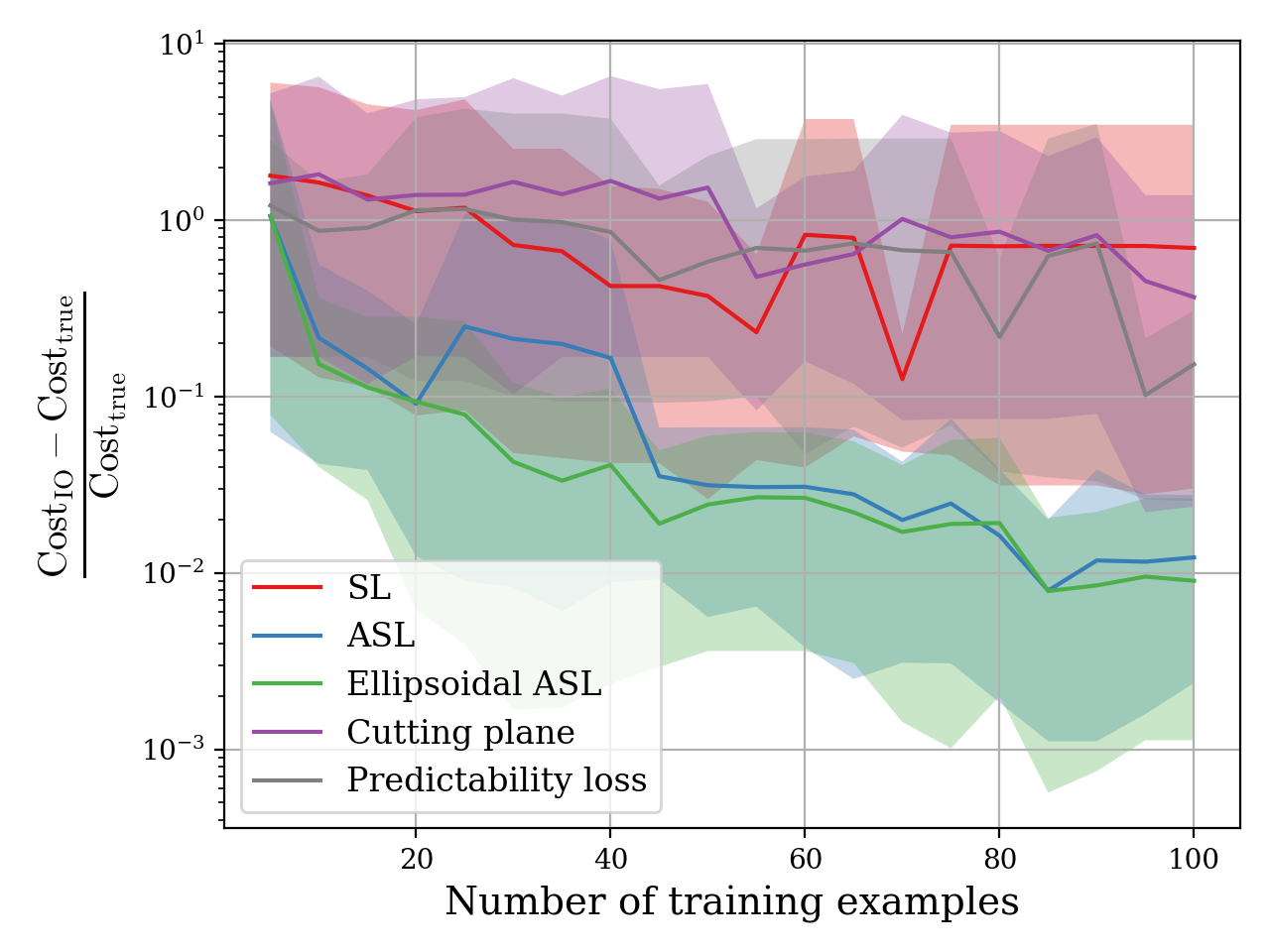}
        \caption{Relative difference between the cost of the decisions generated using $\theta_{\text{true}}$ and $\theta_{\text{IO}}$.}
        \label{fig:inconsistent_obj_diff_out}
    \end{subfigure}
\caption{Out-of-sample results for inconsistent data scenario.}
\label{fig:inconsistent_out}
\end{figure}

\subsection{Mixed-integer feasible set}
\label{sec:MI}

In this section, we numerically evaluate the approach from Section \ref{sec:mixed} for IO problems with mixed-integer feasible sets.

\textbf{Decision problem of the expert.} To generate its decisions, the expert solves a mixed-integer linear program of the form
\begin{equation}
\label{eq:MILP}
\begin{aligned}
\min_{y,z} \quad & \langle q_y, y \rangle + \langle q_z, z \rangle \\
\text{s.t.} \quad & Ay + Bz \leq c \\
& 0 \leq y \leq \mathbbm{1}, \ z \in \{0,1\}^v,
\end{aligned}
\end{equation}
where $\theta \coloneqq (q_y, q_z) \in \mathbb{R}^{u+v}_+$ is the cost vector $A \in \mathbb{R}^{t \times u}$, $B \in \mathbb{R}^{t \times v}$ and $b \in \mathbb{R}^t$.

\textbf{Data generation.} To generate training and test data, we sample the cost vector uniformly from $\{\theta \in \mathbb{R}^{u+v} : 0 \leq \theta \leq \mathbbm{1}\}$, we sample $\hat{A}$ uniformly from $\{A \in \mathbb{R}^{t \times u} : -1 \leq A \leq 0\}$, $\hat{B}$ uniformly from $\{B \in \mathbb{R}^{t \times v} : -1 \leq B \leq 0\}$ and $\hat{c}$ uniformly from $\{c \in \mathbb{R}^t : -2 \leq c \leq 0\}$. To make sure the problem instances are feasible, we checked if the sum of each row of $[\hat{A} \ \hat{B}]$ is smaller than the respective component of $\hat{c}$. Given a tuple $(\hat{A}, \hat{B}, \hat{c})$, we generate a response $\hat{x} = (\hat{y}, \hat{z})$ by solving problem \eqref{eq:MILP}. To evaluate the IO approaches, we generate 10 random cost vectors $\theta_{\text{true}}$. For each of these cost vectors, we generate a training dataset $\widehat{\mathcal{D}}_{\text{train}} = \{(\hat{s}_i, \hat{x}_i)\}_{i=1}^N$ and a test dataset $\widehat{\mathcal{D}}_{\text{test}} = \{(\hat{s}_i, \hat{x}_i)\}_{i=1}^N$, with $N=100$, $u=v=6$ and $t=4$.

\textbf{IO approaches.} We compare six IO approaches for this problem:
\begin{itemize}
    \item \textbf{Suboptimality Loss (SL)}: we use \eqref{eq:MI_reformulation_linear} with no regularization $\mathcal{R}$ and no distance function (i.e., $\kappa = 0$ and $d(\hat{x}_i,x_i) = 0$). To avoid the trivial solution $\theta = 0$, we add the constraint $\|\theta\|_1 = 1$. We call this approach SL since it is the result of performing the reformulation steps of Corollary \ref{coro:MI_reformulation_linear} using the Suboptimality loss instead of the \ref{eq:ASL};
    \item \textbf{ASL-yz}: we use \eqref{eq:MI_reformulation_linear} with $\theta = (q_y, q_z)$, $s = (A, B, c, 0)$ ($A$ and $c$ have to be augmented to account for the constraints $0 \leq y \leq \mathbbm{1}$), $\mathbb{Z}(w) = \{0,1\}^v$, $Q_{yy} = 0$, $\phi_1(w,z) = 1$, $\phi_2(w,z) = z$, $\kappa=0$, $\Theta = \{\theta \in \mathbb{R}^{u+v} : \theta \geq 0 \}$, and $d_z(\hat{x}_i,x_i) = \|\hat{z}_i - z_i\|_2$.
    \item \textbf{ASL-z}: same as \textbf{ASL-yz}, but with $h_k = 0 \ \forall k \in [2u]$ (see discussion in the paragraph after Corollary \ref{coro:MI_reformulation_linear});
    \item \textbf{Circumcenter}: we solve \eqref{eq:circumcenter3};
    \item \textbf{Cutting plane}: we use the cutting plane algorithm of \citep{wang2009cutting};
    \item \textbf{Predictability loss}: we use the predictability loss of \citep{aswani2018inverse} (Appendix \ref{app:theoretical}).
\end{itemize}

\textbf{Results.} Figure \ref{fig:MILP_out} shows the results for this scenario. The discussions on the interpretations of the results of this section mirror the ones of Figure \ref{fig:consistent_out} from Section \ref{sec:consistent_out}. Once again, the approach based on the \ref{eq:ASL} outperforms the other approaches on all three performance metrics. In particular, the ASL-z and ASL-yz show similar performance, with the ASL-yz being slightly better overall and when there are very few training examples. However, because the ASL-yz optimization problem has $2N$ times more constraints than the ASL-z optimization problem, it is much more costly to solve, as can be seen from the computational times in Table \ref{table:time_MILP}.

\begin{table}
\centering
\begin{tabular}{c | c c c c c c} 
\textbf{Approach} & SL & ASL-z & ASL-yz & Circumcenter & Cutting plane & Predictability loss \\
\hline
\textbf{Time (seconds)} & 607 & 694 & 7060 & 5954 & 981 & 1336
\end{tabular}
\caption{Computational time to generate the results of Figure \ref{fig:MILP_out}.}
\label{table:time_MILP}
\end{table}

\begin{figure}
\centering
\captionsetup[subfigure]{width=0.96\linewidth}%
    \begin{subfigure}[t]{0.32\linewidth}
        \includegraphics[width = \linewidth]{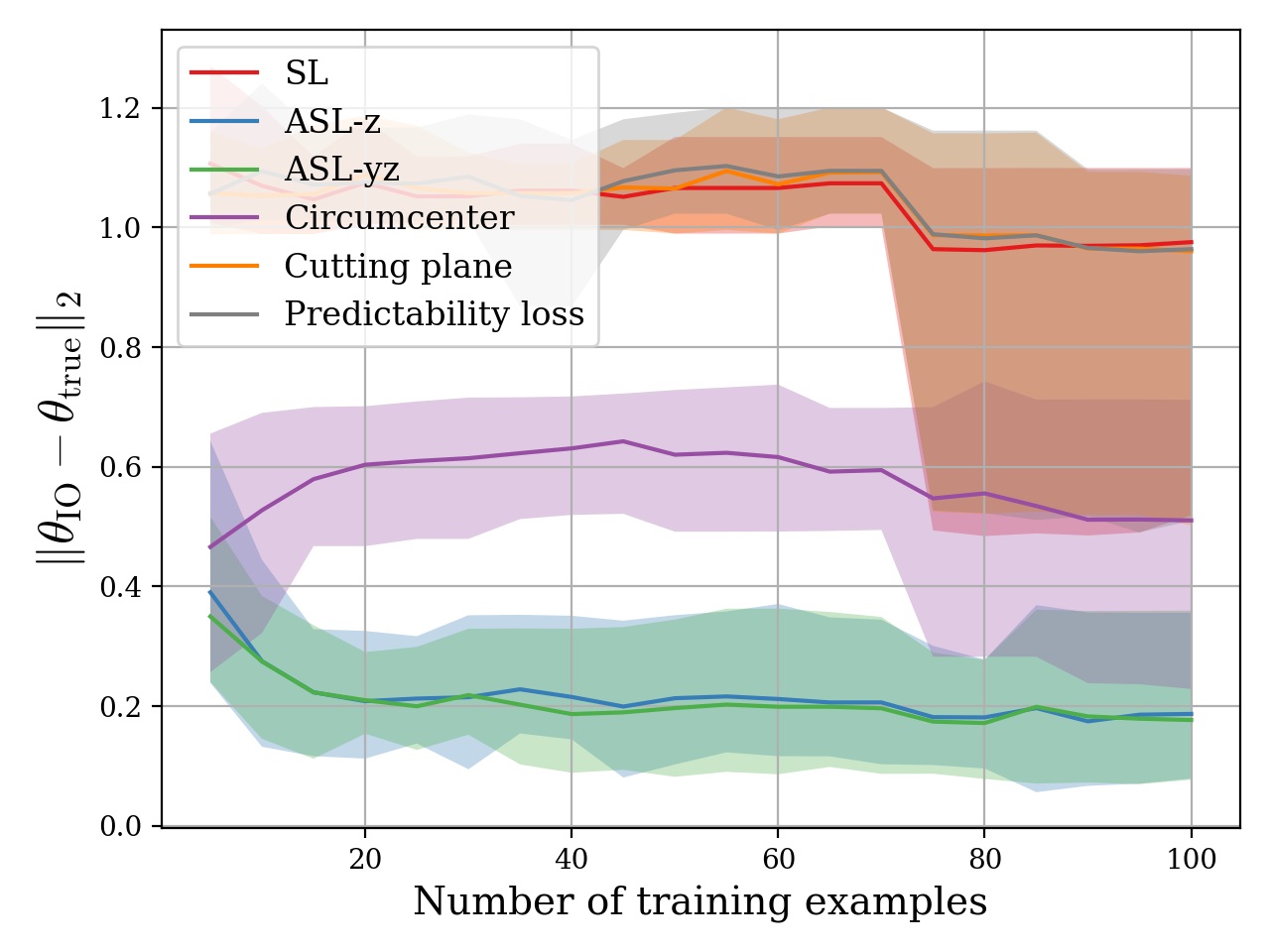}
        \caption{Difference between the true cost vector ($\theta_{\text{true}}$) and the one learned using IO ($\theta_{\text{IO}}$).}
        \label{fig:MILP_theta_diff}
    \end{subfigure}
    \begin{subfigure}[t]{0.32\linewidth}
        \includegraphics[width = \linewidth]{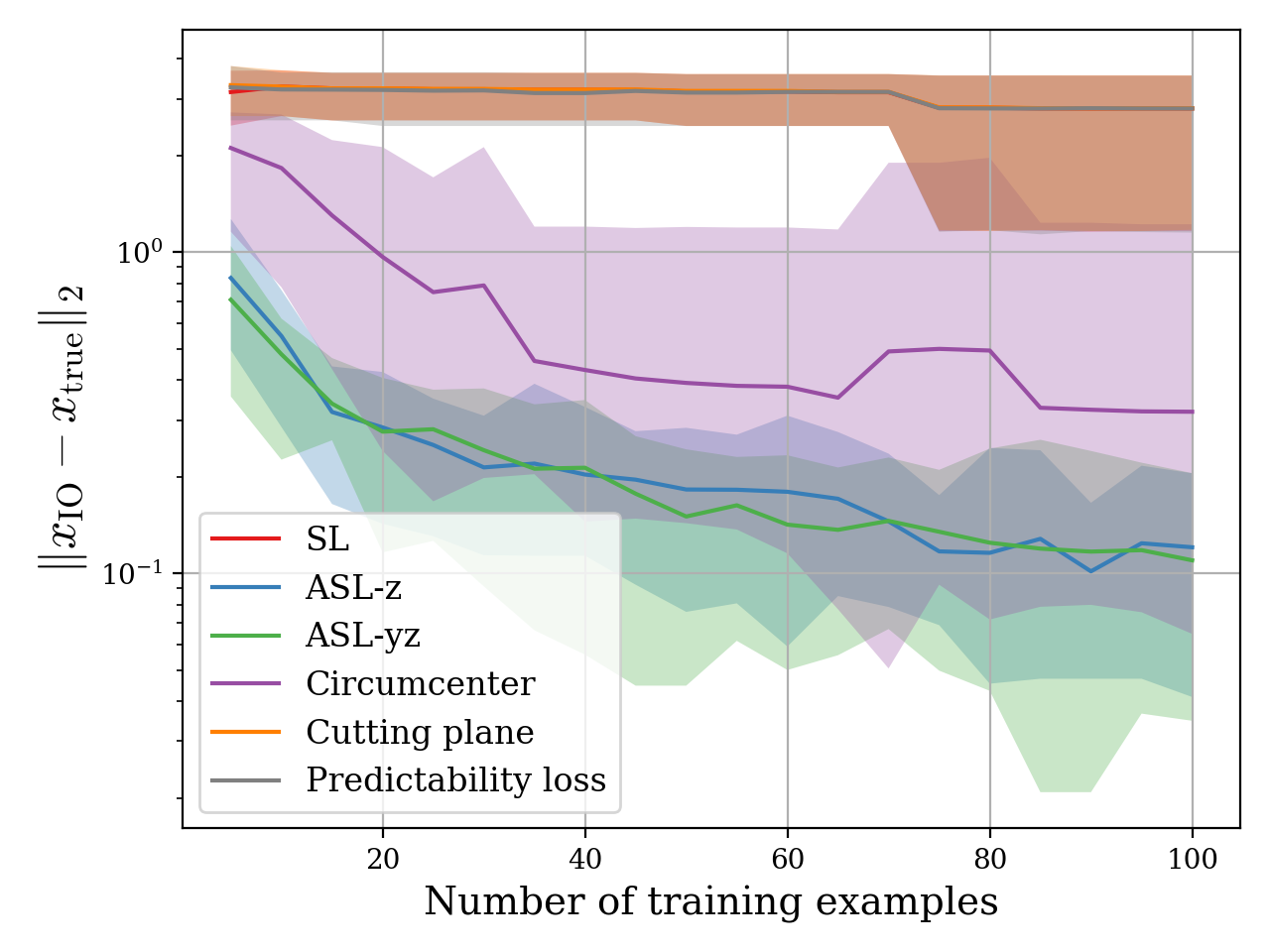}
        \caption{Average error between the decision generated by $\theta_{\text{true}}$ and $\theta_{\text{IO}}$.}
        \label{fig:MILP_x_diff_out}
    \end{subfigure}
    \begin{subfigure}[t]{0.32\linewidth}
        \includegraphics[width = \linewidth]{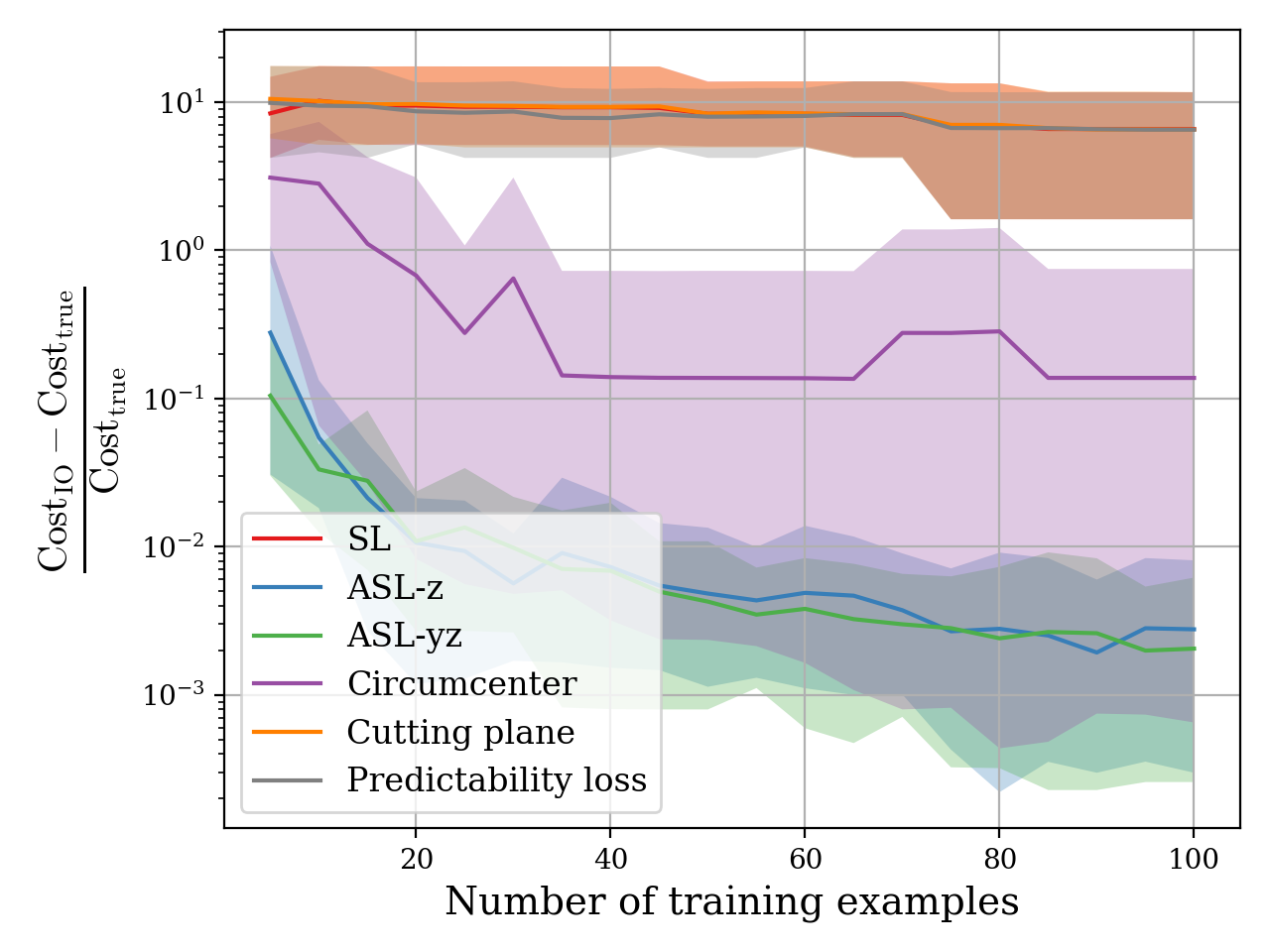}
        \caption{Relative difference between the cost of the decisions generated using $\theta_{\text{true}}$ and $\theta_{\text{IO}}$.}
        \label{fig:MILP_obj_diff_out}
    \end{subfigure}
\caption{Out-of-sample results for the mixed-integer feasible set scenario.}
\label{fig:MILP_out}
\end{figure}

\subsection{Stochastic approximate mirror descent}
\label{sec:iter_algs}

In this section, we numerically evaluate the \ref{alg:SAMD} algorithm proposed in Section~\ref{sec:first-order}.

\textbf{Decision problem of the expert.} To generate its decisions, the expert solves the binary linear program \eqref{eq:BLP}, where $\theta \in \mathbb{R}^n_+$ is the cost vector, $A \in \mathbb{R}^{t \times n}$, $b \in \mathbb{R}^t$, and $x$ is the decision vector. We can write this optimization problem as $\min_{x \in \mathbb{X}(s)} F(s,x)$ by defining $s \coloneqq (A, b)$, $F(s,x) \coloneqq \langle \theta, x \rangle$ and $\mathbb{X}(s) \coloneqq \{x \in \{0,1\}^n : Ax \geq b\}$.

\textbf{Data generation.} To generate training and test data, we sample cost vectors uniformly from $\{\theta \in \mathbb{R}^n : 0 \leq \theta \leq \mathbbm{1}\}$, we sample $\hat{A}$ uniformly from $\{A \in \mathbb{R}^{t \times n} : -1 \leq A \leq 0\}$ and $\hat{b}$ uniformly from $\{b \in \mathbb{R}^t : -\frac{n}{3} \leq b \leq 0\}$. To make sure the problem instances are feasible, we checked if the sum of each row of $\hat{A}$ is larger than the respective component of $\hat{b}$. Given a signal $\hat{s}$, we generate a response $\hat{x}$ by solving \eqref{eq:BLP}, i.e., $\hat{x} \in \argmin_{x \in \mathbb{X}(\hat{s})} \langle \theta, x \rangle$. To evaluate the IO approaches, we generate 10 random cost vectors $\theta_{\text{true}}$. For each of these cost vectors, we generate a training dataset $\widehat{\mathcal{D}}_{\text{train}} = \{(\hat{s}_i, \hat{x}_i)\}_{i=1}^N$ and a test dataset $\widehat{\mathcal{D}}_{\text{test}} = \{(\hat{s}_i, \hat{x}_i)\}_{i=1}^N$, with $N=50$, $n=20$ and $t=15$.

\textbf{IO approaches.} We tackle this IO problem using \eqref{eq:reg_loss_minimization} with $\kappa = 0.01$, $\mathcal{R}(\theta) = \|\theta\|_1$, $\Theta = \{\theta \in \mathbb{R}^n : \theta \geq 0\}$ and the \ref{eq:ASL} with $s = (A, b)$, $\mathbb{X}(s) = \{x \in \{0,1\}^n : Ax \leq b\}$, $\phi(s,x) = x$ and $d(\hat{x},x) = \|x - \hat{x}\|_1$. When testing algorithms with exponentiated updates (i.e., mirror descent updates with $\omega (\theta) = \sum_{i=1}^n \theta_i\log(\theta_i)$), we solve the reformulation \eqref{eq:l1_reformulation}, with $\tilde{\kappa}$ chosen such that \eqref{eq:reg_loss_minimization} and \eqref{eq:l1_reformulation} have the same optimal solution. We compare eight algorithms, which result from all possible combinations of standard or mirror descent updates, deterministic or stochastic subgradients, and exact or approximate subgradients.
\begin{enumerate}[(i)]
    \item \textbf{Subgradient method (SM)}: Algorithm \ref{alg:IO_SAMD} with $\omega(\theta) = \frac{1}{2}\|\theta\|_2^2$ and exact subgradients computed using the entire dataset;
    \item \textbf{Mirror descent (MD)} with exponentiated updates: Algorithm \ref{alg:IO_SAMD} with $\omega (\theta) = \sum_{i=1}^n \theta_i\log(\theta_i)$ and exact subgradients computed using the entire dataset;
    \item \textbf{Stochastic subgradient method (SSM)}: Algorithm \ref{alg:IO_SAMD} with $\omega(\theta) = \frac{1}{2}\|\theta\|_2^2$ and exact stochastic subgradients;
    \item \textbf{Approximated subgradient method (ASM)}: Algorithm \ref{alg:IO_SAMD} with $\omega(\theta) = \frac{1}{2}\|\theta\|_2^2$ and approximate subgradients computed using the entire dataset;
    \item \textbf{Stochastic mirror descent (SMD)} with exponentiated updates: Algorithm \ref{alg:IO_SAMD} with $\omega (\theta) = \sum_{i=1}^n \theta_i\log(\theta_i)$ and exact stochastic subgradients;
    \item \textbf{Approximate mirror descent (AMD)} with exponentiated updates: Algorithm \ref{alg:IO_SAMD} with $\omega (\theta) = \sum_{i=1}^n \theta_i\log(\theta_i)$ and and approximate subgradients computed using the entire dataset;
    \item \textbf{Stochastic approximate subgradient method (SASM)}: Algorithm \ref{alg:IO_SAMD} with $\omega(\theta) = \frac{1}{2}\|\theta\|_2^2$ and approximate stochastic subgradients.
    \item \textbf{Stochastic Approximate Mirror Descent (SAMD)} with exponentiated updates, i.e., Algorithm \ref{alg:IO_SAMD} with $\omega (\theta) = \sum_{i=1}^n \theta_i\log(\theta_i)$ and approximate stochastic subgradients.
\end{enumerate}
For all algorithms, we use $\eta_t = 1/(\|\tilde{g}_{\varepsilon_t}(\theta_t)\|_*\sqrt{t})$. To compute approximate subgradients for the ASM and SAMD algorithms, we give the solver (in our case, Gurobi) a time limit of $0.03$ seconds to solve the optimization problem in line 4 of Algorithm \ref{alg:IO_SAMD}. If the solver is not able to find an optimal solution within this time limit, it returns the best feasible solution found.

\begin{figure}
\centering
\captionsetup[subfigure]{width=0.96\linewidth}%
    \begin{subfigure}[t]{0.4\linewidth}
        \includegraphics[width = \linewidth]{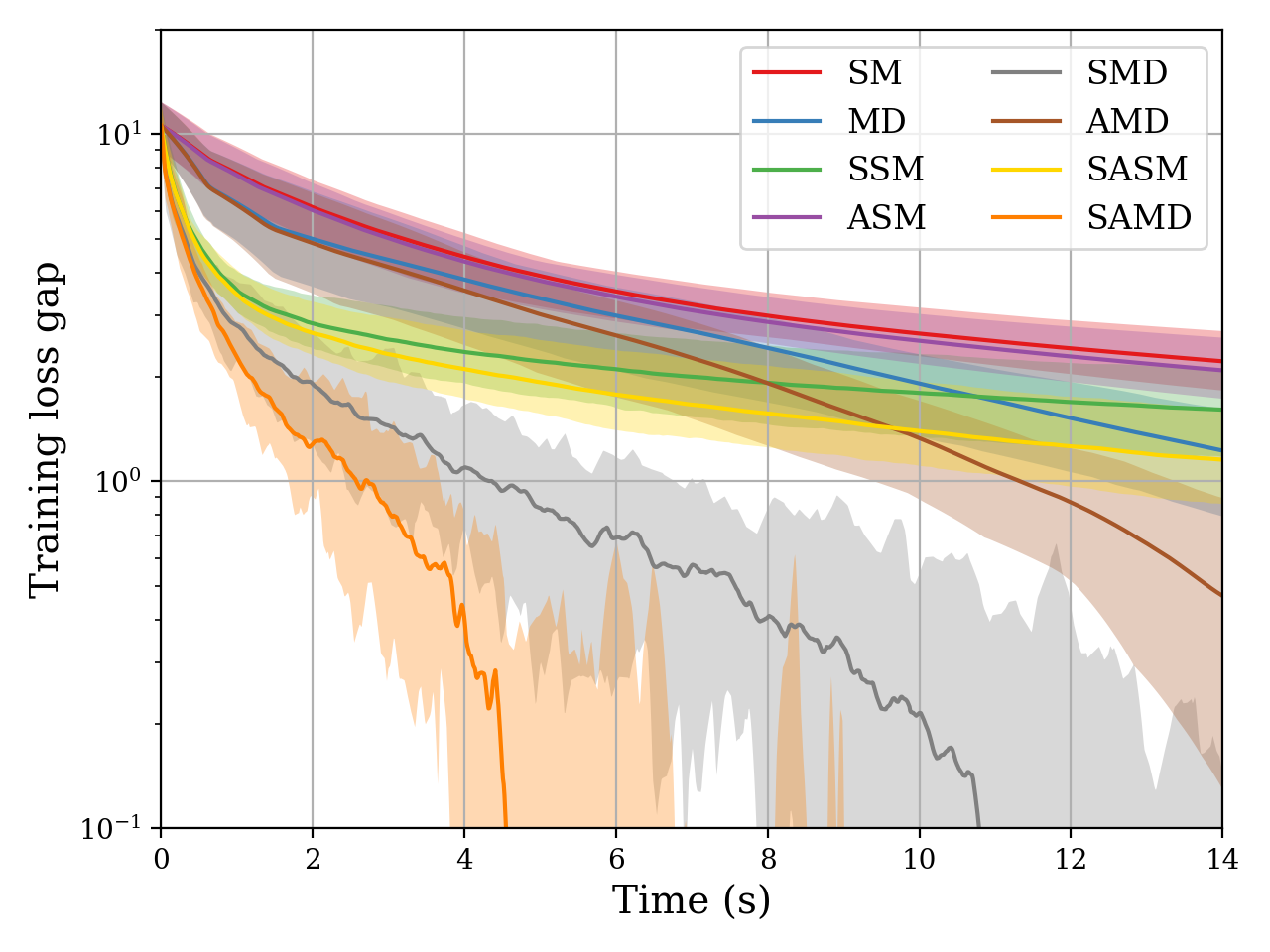}
        \caption{Convergence of the first-order optimization algorithms.}
        \label{fig:FOM_all_loss_diff}
    \end{subfigure}
    \begin{subfigure}[t]{0.4\linewidth}
        \includegraphics[width = \linewidth]{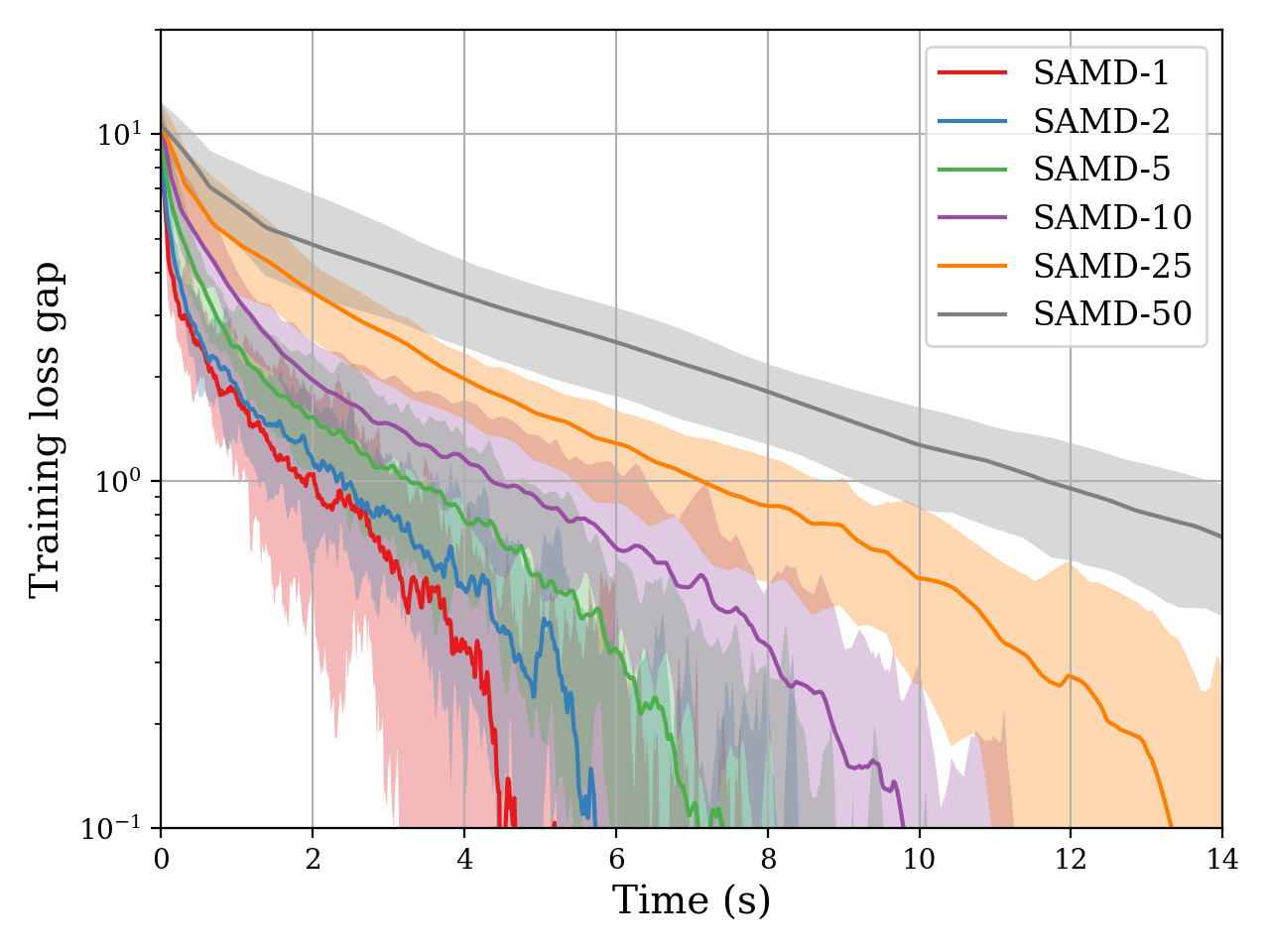}
        \caption{Convergence of the SAMD algorithms for different sizes of batch sizes.}
        \label{fig:batch_loss_diff}
    \end{subfigure}
\caption{Convergence time of different first-order optimization algorithms and for different batch sizes.}
\label{fig:loss_diff}
\end{figure}

\textbf{Results.} To compare the performance of the algorithms, we report their results in terms of running time instead of the number of iterations. Figure \ref{fig:FOM_all_loss_diff} shows the convergence of the proposed algorithms in terms of the training loss gap. More precisely, defining $f(\theta) \coloneqq \kappa \mathcal{R}(\theta) + \frac{1}{N}\sum_{i=1}^N \ell_\theta (\hat{s}_i, \hat{x}_i)$, the training loss gap of some $\theta_t$ is defined as $f(\theta_t) - \min_{\theta \in \Theta} f(\theta)$.  From this plot, it is clear that each modification of the standard subgradient method (i.e., mirror descent updates, stochastic subgradients, and approximate subgradients) contributes to improving the convergence speed of the algorithms, with the fastest convergence achieved by the combination of all of these improvements, i.e., the SAMD algorithm. In Figure \ref{fig:batch_loss_diff}, we show the convergence of the SAMD using different batch sizes to compute stochastic subgradients. That is, instead of using only one sampled example at each iteration (line 3 of Algorithm~\ref{alg:IO_SAMD}), we sample a batch of $B$ examples and use this batch of data to compute the stochastic subgradient, which we call $\text{SAMD-}B$. From Figure \ref{fig:batch_loss_diff} we can see that even though smaller batches lead to more variance in the converge of the algorithm (as expected) it also leads to a faster empirical convergence for this experiment.

%%%%%%%%%%%%%%%%%%%%%%%%%%%%%%%%%%%%%%%%%%%%%%%%%%%%%%%%%%%%%%%%%%%%

\section{Further Discussions and Potential Applications}
\label{sec:conclusion}

In this work, we present new approaches to tackle inverse optimization problems. Based on the geometry of the set of consistent cost vectors, we propose the use of an incenter vector of this set as the solution to the IO problem. Moreover, we propose a new loss function for IO problems: the Augmented Suboptimality Loss. This loss can be interpreted as a relaxation of the incenter approach to tackle problems with inconsistent data. Moreover, this loss can be used to derive IO approaches tailored for problems with mixed-integer feasible sets and can be optimized directly using first-order methods. For the latter case, we propose new algorithms that combine stochastic, approximate, and mirror descent updates, all of which can be used to exploit the structure of IO losses. Finally, we numerically evaluate the approaches proposed in this paper and show that they can outperform state-of-the-art methods in a variety of scenarios.

The work presented in this paper opens several avenues for future research. For instance, one could try different ways to choose a vector from the set of consistent costs, for example, using ellipsoidal cones instead of circular cones. This idea was used in online IO algorithms (see \cite[Section 4.3]{besbes2023contextual}), but perhaps it can also be leveraged for better empirical performance in the offline case. Also, one could try to adapt and further develop the incenter approaches presented in this paper to the online IO scenario, perhaps leveraging the geometry of the IO problem to prove tighter regret bounds in some specific scenarios, akin to \citep{besbes2023contextual}. Another idea is to come up with different algorithms to tackle the optimization problems discussed in this paper, for instance, using cutting-plane/bundle methods, extending the related literature of structured prediction problems \citep{wang2009cutting, joachims2009cutting, teo2010bundle}. Moreover, given the wealth of different first-order methods in the literature, we feel like designing specialized first-order (or even proximal or second-order) algorithms tailored for inverse optimization problems is an under-explored research direction. Given the newly established bridge between IO and structured prediction problems (see Remark \ref{remark:connections_structured}), we forsee that many interesting results can be achieved by combining and extending the literature of these two, mostly disjoint, communities. Finally, it would be interesting to extend the numerical experiments of this paper using real-world data, for instance, data from transportation/routing problems.

%%%%%%%%%%%%%%%%%%%%%%%%%%%%%%%%%%%%%%%%%%%%%%%%%%%%%%%%%%%%%%%%%%%%

\appendix

\section{Theoretical Properties of the Augmented Suboptimality Loss}
\label{app:theoretical}

Define $F_{\theta,\phi}(s,x) \coloneqq \inner{\theta}{\phi(s,x)}$. In this section, we discuss several theoretical properties of the Augmented Suboptimality Loss (\ref{eq:ASL}), and show how it can be interpreted as a convex surrogate of the \textit{Generalized Predictability Loss} (GPL). The GPL is a natural loss function for IO problems and is defined as follows.

\begin{definition}[Generalized Predictability Loss]
Given a signal-response pair $(\hat{s},\hat{x})$, the Generalized Predictability Loss of a cost vector $\theta$ is given by
\begin{equation}
    \label{eq:predictability}
    \begin{aligned}    \ell^{\emph{pred}}_\theta(\hat{s},\hat{x}) \coloneqq & \min_{x} \ d(\hat{x},x) \\
        & \ \emph{s.t.} \quad x \in \argmin_{y \in \mathbb{X}(\hat{s})} F_{\theta,\phi}(\hat{s},y),
    \end{aligned}
\end{equation}
where $d : \mathbb{X} \times \mathbb{X} \to \mathbb{R}_+$ is a distance function.  
\end{definition}
The GPL has an intuitive interpretation for IO problems: given a signal $s$, it computes how close (in terms of $d$) are $\hat{x}$ (the expert's response) and $x$ (an optimal response according to $F_{\theta,\phi}$). Consequently, by solving \eqref{eq:reg_loss_minimization} using the GPL, we optimize for a cost $F_{\theta,\phi}$ that best reproduces the responses taken by the expert for each signal $s$. For the special case when $d(x,y) = \|x - y\|^2_2$, the GPL reduces to the so-called \textit{predictability loss}. Unfortunately, when using the GPL, problem \eqref{eq:reg_loss_minimization} is an NP-hard bi-level optimization problem in general \citep{aswani2018inverse}. 

To come up with losses that are convex w.r.t. $\theta$, and still meaningful for IO problems, we can use the concept of \textit{convex surrogate functions}. Informally, given a nonconvex function $g$, a convex surrogate function $h$ is a convex upper bound of $g$. The idea is then to minimize $h$ instead of $g$. The hope is that by minimizing $h$, we also minimize (at least to some extent) the original nonconvex function $g$. Interestingly, we can show that the \ref{eq:ASL} is a convex surrogate for the GPL and that it possesses several properties attractive for IO loss functions. In particular, the properties of the \ref{eq:ASL} shown in Proposition \ref{prop:augmented_suboptimality_surrogate} are similar to (and in some sense generalizations of) the properties of the Suboptimality Loss presented in \citep{esfahani2018data}.

\begin{assumption}[Optimizer condition]
\label{ass:optimizer_condition}
Let the distance function $d: \mathbb{X} \times \mathbb{X} \to \mathbb{R}_+$ be such that $d(x,y) = 0$ if and only if $x=y$. Then, $\forall x \in \mathbb{X}(s)$ and $\forall s \in \mathbb{S}$, there exists a $\theta \in \Theta$ such that
$$
y^\star  \in \argmin_{y \in \mathbb{X}(s)} F_{\theta,\phi}(s,y) \implies F_{\theta,\phi}(s,x) - F_{\theta,\phi}(s,y^\star ) \geq d(x,y^\star ).
$$
\end{assumption}

Assumption \ref{ass:optimizer_condition} can be interpreted as a ``unique minimizer'' condition and it is necessary to prove the ``consistency'' property of Proposition \ref{prop:augmented_suboptimality_surrogate}. This is a strong assumption and does not hold for general IO problems. However, this assumption is not critical for the algorithm design and analysis presented in this paper. Moreover, empirically, IO algorithms based on the \ref{eq:ASL} perform well even for IO problems that do not possess the ``unique minimizer'' property (for example, see the numerical results of Section \ref{sec:numerical}). Finally, we note that Assumption \ref{ass:optimizer_condition} can be interpreted as a generalization of other assumptions from the literature, which are also used to prove consistency-like properties of loss functions. We list three such examples, where we use the notation $y^\star  \in \argmin_{y \in \mathbb{X}(s)} F_{\theta,\phi}(s,y)$.
\begin{itemize}
    \item Let $d(x,y) = I(x,y)$, the 0-1 distance. In this case, Assumption \ref{ass:optimizer_condition} reads as
    $$
    F_{\theta,\phi}(s,x) -  F_{\theta,\phi}(s,y^\star ) \geq 1 \quad \forall (s, x) \in \mathbb{S} \times \mathbb{X}(s)\setminus \{y^\star \}.
    $$
    The inequality above holds, for instance, in the case $F_{\theta,\phi}(s,x) = \inner{\theta}{x}$ with $\theta \in \mathbb{Z}^n_+$ (a vector of positive integers) and $\mathbb{X}(x) = \{0,1\}^n$ \citep[Corollary 3.6]{barmann2017emulating}.
    \item Let $d(x,y) = \|x - y\|^2_2$. In this case, Assumption \ref{ass:optimizer_condition} reads as
    $$
    F_{\theta,\phi}(s,x) - F_{\theta,\phi}(s,y^\star ) \geq \|x - y^\star \|^2_2,  \quad \forall (s, x) \in \mathbb{S} \times \mathbb{X}(s).
    $$
    Let the set $\mathbb{X}(s)$ be $\mu$-strongly convex with $\mu = 2/\|\nabla_xF_{\theta,\phi}(s,y^\star )\|_2$. Then, one can show that Assumption \ref{ass:optimizer_condition} holds \cite[Proposition 1]{el2019generalization}. Particularly, notice that the inequality holds even when $F_{\theta,\phi}(s,x) = \inner{\theta}{x}$.
    \item Let $d(x,y) = \|x - y\|^2_2$. In this case, Assumption \ref{ass:optimizer_condition} reads as
    $$
    F_{\theta,\phi}(s,x) - F_{\theta,\phi}(s,y^\star ) \geq \|x - y^\star \|^2_2,  \quad \forall (s, x) \in \mathbb{S} \times \mathbb{X}(s).
    $$
    One can show that this inequality holds when $F_{\theta,\phi}(s,x)$ is $2$-strongly convex w.r.t. $x$. Thus, one can interpret Assumption \ref{ass:optimizer_condition} as a generalization of the strong-convexity assumption used in \cite[Proposition 2.5]{esfahani2018data}.
\end{itemize}

\begin{proposition}[Properties of the \ref{eq:ASL}]
\label{prop:augmented_suboptimality_surrogate}
The \ref{eq:ASL} $\ell_\theta$ has the following properties:
\begin{itemize}
    \item (\textbf{Convex surrogate}) $\ell^{\emph{pred}}_\theta(s,x) \leq \ell_\theta(s,x) \quad \forall (s, x) \in \mathbb{S} \times \mathbb{X}(s)$;
    \item (\textbf{Convexity}) $\ell_\theta$ is convex w.r.t. $\theta$;
    \item (\textbf{Nonnegativity}) $\ell_\theta(s,x) \geq 0, \quad \forall (s, x) \in \mathbb{S} \times \mathbb{X}(s)$;
    \item (\textbf{Consistency}) $\ell_\theta(s,x)=0 \implies x \in \argmin_{y \in \mathbb{X}(s)} F_{\theta,\phi}(s,y), \quad \forall s \in \mathbb{S}$. If Assumption \ref{ass:optimizer_condition} holds, then $\ell_\theta(s,x)=0 \impliedby x \in \argmin_{y \in \mathbb{X}(s)} F_{\theta,\phi}(s,y), \quad \forall s \in \mathbb{S}$.
\end{itemize}
\end{proposition}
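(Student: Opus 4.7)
The four claims are essentially consequences of elementary observations about the max-type expression defining the \ref{eq:ASL}, so my plan is to dispatch them in the order listed, each via a short argument that exploits a different feature of the definition.

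\textbf{Convex surrogate.} I would start by selecting an explicit witness $x^\star$ that attains the predictability loss, i.e.\ $x^\star\in\argmin_{y\in\mathbb{X}(s)}F_{\theta,\phi}(s,y)$ with $d(\hat{x},x^\star)=\ell^{\text{pred}}_\theta(s,\hat{x})$. Plugging $x^\star$ into the feasible set of the maximization defining $\ell_\theta(s,\hat{x})$ yields the lower bound $\langle\theta,\phi(s,\hat{x})-\phi(s,x^\star)\rangle + d(\hat{x},x^\star)$. The key observation is that the first term is nonnegative because $x^\star$ minimizes $F_{\theta,\phi}(s,\cdot)=\langle\theta,\phi(s,\cdot)\rangle$ over $\mathbb{X}(s)$ and $\hat{x}\in\mathbb{X}(s)$, so the bound reduces to $d(\hat{x},x^\star)=\ell^{\text{pred}}_\theta(s,\hat{x})$.

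\textbf{Convexity and nonnegativity.} Convexity is immediate: the expression inside the max in \eqref{eq:ASL} is affine in $\theta$ for each fixed $x\in\mathbb{X}(\hat{s})$, and a pointwise supremum of affine functions is convex. For nonnegativity I would simply plug in the feasible candidate $x=\hat{x}$ into the maximization; the inner-product term vanishes and $d(\hat{x},\hat{x})=0$, so the max is bounded below by $0$.

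\textbf{Consistency.} The forward implication is the easier half: $\ell_\theta(s,\hat{x})=0$ together with nonnegativity of $d$ forces $\langle\theta,\phi(s,\hat{x})-\phi(s,x)\rangle\leq 0$ for every $x\in\mathbb{X}(s)$, which is precisely the statement $\hat{x}\in\argmin_{y\in\mathbb{X}(s)}F_{\theta,\phi}(s,y)$. For the reverse implication I would invoke Assumption \ref{ass:optimizer_condition}: applying it with $y^\star=\hat{x}$ (which is a minimizer by hypothesis) yields $F_{\theta,\phi}(s,x)-F_{\theta,\phi}(s,\hat{x})\geq d(x,\hat{x})$ for all $x\in\mathbb{X}(s)$, so every term in the max defining $\ell_\theta(s,\hat{x})$ is bounded above by $0$. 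Combining with nonnegativity forces $\ell_\theta(s,\hat{x})=0$.

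\textbf{Main obstacle.} There is no really hard step; the only subtlety I anticipate is the reverse direction of consistency, where one must be careful to apply Assumption \ref{ass:optimizer_condition} in the correct orientation (i.e.\ with the expert action $\hat{x}$ playing the role of the minimizer $y^\star$) and to note any symmetry requirement on $d$ so that $d(x,\hat{x})$ can be identified with $d(\hat{x},x)$ in the definition of \ref{eq:ASL}. If $d$ is not symmetric this step requires a minor reformulation, but for the common choices of $d$ used throughout the paper (Euclidean, Hamming, $0$--$1$) symmetry is automatic.
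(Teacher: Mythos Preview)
Your proposal is correct and follows essentially the same route as the paper's proof. The only minor differences are cosmetic: for nonnegativity the paper invokes the surrogate bound (since $\ell^{\text{pred}}_\theta\geq 0$) rather than plugging in $x=\hat{x}$ as you do, and for the reverse consistency direction the paper silently identifies $d(y,x)$ with $d(x,y)$ without comment, so your remark about the symmetry of $d$ is in fact a point the paper glosses over.
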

\begin{proof}
Let $y^\star  \in \argmin_{y \in \mathbb{X}(s)} F_{\theta,\phi}(s,y)$.

\textbf{(Convex surrogate)} By the definition of the generalized predictability loss and $y^\star $, we have
$\ell^\text{pred}_\theta(s,x) \leq d(x,y^\star )$ and
\begin{equation}
    \label{eq:optimality_y_star}
    F_{\theta,\phi}(s,x) - F_{\theta,\phi}(s,y^\star ) \geq 0,
\end{equation}
for all for all $x \in \mathbb{X}(s)$, for all $s \in \mathbb{S}$. Thus,
\begin{align*}
    \ell^\text{pred}_\theta(s,x) 
    &\leq d(x,y^\star ) \\
    &\leq d(x,y^\star ) + F_{\theta,\phi}(s,x) - F_{\theta,\phi}(s,y^\star ) \tag{Eq. \eqref{eq:optimality_y_star}}\\
    &\leq \max_{y \in \mathbb{X}(s)} \left\{d(x,y) + F_{\theta,\phi}(s,x) - F_{\theta,\phi}(s,y) \right\} = \ell_\theta(s,x).
\end{align*}

\textbf{(Convexity)} Using the fact that $F_{\theta,\phi}(s,x) = \inner{\theta}{\phi(s,x)}$, we conclude that $\ell_\theta(s,x)$ is the pointwise maximum of linear functions of $\theta$, which is convex w.r.t. $\theta$.

\textbf{(Nonnegative)} Since $\ell^\text{pred}_\theta(s,x)$ is nonnegative, and we have shown that the \ref{eq:ASL} upper bounds the GPL, this implies $\ell_\theta(s,x) \geq 0$.

\textbf{(Consistency $\implies$)}
\begin{align}
    \ell_\theta(s,x)=0 
    &\implies \max_{y \in \mathbb{X}(s)} \left\{ F_{\theta,\phi}(s,x) - F_{\theta,\phi}(s,y) + d(x,y) \right\} = 0 \nonumber\\
    &\implies F_{\theta,\phi}(s,x) = \min_{y \in \mathbb{X}(s)} \left\{F_{\theta,\phi}(s,y) - d(x,y) \right\} \nonumber\\
    &\implies F_{\theta,\phi}(s,x) \leq F_{\theta,\phi}(s,y) - d(x,y), \quad \text{for any } y \in \mathbb{X}(s) \nonumber\\
    &\implies F_{\theta,\phi}(s,x) \leq F_{\theta,\phi}(s,y^\star ) - d(x,y^\star ) \nonumber\\
    &\implies F_{\theta,\phi}(s,x) - F_{\theta,\phi}(s,y^\star ) \leq 0 \label{eq:consistency_1},
\end{align}
where the last inequality follows from the facts that $d(x,y^\star )$ is nonnegative. From \eqref{eq:optimality_y_star} and \eqref{eq:consistency_1} and the definition of $y^\star $, we conclude $x \in \argmin_{y \in \mathbb{X}(s)} F_{\theta,\phi}(s,y)$.

\textbf{(Consistency $\impliedby$)} By Assumption \ref{ass:optimizer_condition}, we have that
\begin{align*}
    x \in \argmin_{y \in \mathbb{X}(s)} F_{\theta,\phi}(s,y)
    &\implies F_{\theta,\phi}(s,y) - F_{\theta,\phi}(s,x) \geq d(y,x), \quad \text{for any } y \in \mathbb{X}(s) \\
    &\implies F_{\theta,\phi}(s,x) - F_{\theta,\phi}(s,y) + d(y,x) \leq 0, \quad \text{for any } y \in \mathbb{X}(s) \\
    &\implies \max_{y \in \mathbb{X}(s)} \left\{ F_{\theta,\phi}(s,x) - F_{\theta,\phi}(s,y) + d(y,x) \right\} \leq 0 \implies \ell_\theta (s,x) \leq 0.
\end{align*}
Since we have already shown that $\ell_\theta (s,x) \geq 0$, this implies $\ell_\theta (s,x) = 0$.
\end{proof}

%%%%%%%%%%%%%%%%%%%%%%%%%%%%%%%%%%%%%%%%%%%%%%%%%%%%%%%%%%%%%%%%%%%%

\section{Continuous problems: special cases}
\label{app:continuous}

A key step in the derivation of the reformulation presented in Theorem \ref{theo:MI_reformulation} is dualizing the maximization problem in the \ref{eq:ASL} with respect to the continuous part of the decision vector. For the reformulation to be exact, strong duality needs to hold, i.e., the maximization problem needs to be concave w.r.t. the continuous variable. As mentioned in Section \ref{sec:mixed}, using a general $\infty$-norm penalization comes at the expense of increasing the number of constraints in the reformulation by a factor of $2u$. However, for some special cases, one can dualize the distance-augmented problem without increasing the number of constraints of the final problem. Here we briefly discuss two such examples: \textit{linear programs with totally unimodular constraint matrix} and \textit{quadratically constrained quadratic programs}.

\textbf{LPs with totally unimodular constraint matrix.}  Let us consider IO problems with linear hypothesis $\inner{\theta}{\phi(s,x)} = \langle \theta, x \rangle$ and linear constraints $Ax \leq b, 0 \leq x \leq \mathbbm{1}$. For this problem, the inner maximization problem of the \ref{eq:ASL} with $d(\hat{x}, x) = \|\hat{x} - x\|_1$ is of the form
\begin{equation}
\label{eq:linear_ASL_max}
    \begin{aligned}
        \max_{x \in \mathbb{R}^n} \quad & \langle \theta, \hat{x} - x \rangle + \|\hat{x} - x\|_1 \\
        \text{s.t.} \quad \ & Ax \leq b \\
        &0 \leq x \leq \mathbbm{1}.
    \end{aligned}
\end{equation}
Due to the norm in the objective function, this is a nonconcave optimization problem in general. However, consider the case when the constraint matrix $A$ in \eqref{eq:linear_ASL_max} is \textit{totally unimodular} and $b$ is an integer vector. Totally unimodular constraint matrices appear in many classical optimization problems, for instance, shortest path problems, bipartite graph matching, and maximum flow problems \citep{conforti2014integer}. In this case, it is a well-known result that every extreme point of the polyhedron $\{x : Ax \leq b, \ 0 \leq x \leq \mathbbm{1}\}$ is integral \citep{hoffman2010integral}, thus, \eqref{eq:linear_ASL_max} has integral (in this particular case, binary) optima. Combined with the fact that, if $\hat{x}$ and $x$ are binary vectors, we have that $\|\hat{x} - x\|_1 = \inner{\mathbbm{1} - 2\hat{x}}{x} + \inner{\mathbbm{1}}{\hat{x}}$, the optimization problem \eqref{eq:linear_ASL_max} is equivalent to
\begin{equation*}
    \begin{aligned}
        \max_{x \in \mathbb{R}^n} \quad & \langle \theta, \hat{x} - x \rangle + \inner{\mathbbm{1} - 2\hat{x}}{x} + \inner{\mathbbm{1}}{\hat{x}} \\
        \text{s.t.} \quad \ & Ax \leq b \\
        &0 \leq x \leq \mathbbm{1},
    \end{aligned}
\end{equation*}
that is, problem \eqref{eq:linear_ASL_max} is equivalent to an LP and, thus, can be dualized without increasing the number of constraints in the final IO reformulation.

\textbf{Quadratically constrained quadratic programs.} Next, consider IO problems with quadratic hypothesis $\inner{\theta}{\phi(s,x)} = \langle x, Qx \rangle + 2\langle x, q \rangle$ and one quadratic constraint $\langle x, Ax \rangle + 2\langle x, b \rangle + c \leq 0$. For this problem, the inner maximization problem of the \ref{eq:ASL} with $d(\hat{x}, x) = \|\hat{x} - x\|_2^2$ is of the form
\begin{equation}
\label{eq:quadratic_ASL_max}
    \begin{aligned}
        \max_{x \in \mathbb{R}^n} \quad & \langle \hat{x}, Q\hat{x} \rangle + 2\langle \hat{x}, q \rangle - \langle x, Qx \rangle - 2\langle x, q \rangle + \|\hat{x} - x\|_2^2 \\
        \text{s.t.} \quad \ & \langle x, Ax \rangle + 2\langle x, b \rangle + c \leq 0.
    \end{aligned}
\end{equation}
Problem \eqref{eq:quadratic_ASL_max} is sometimes called a \textit{Generalized Trust Region Problem}, and has numerous applications to, for example, robust optimization, signal processing, and compressed sensing \citep{more1993generalizations, pong2014generalized, wang2022generalized}. Moreover, \eqref{eq:quadratic_ASL_max} is not a concave maximization problem unless $Q - I \succcurlyeq 0$. However, it is a well-known result that strong duality holds for \eqref{eq:quadratic_ASL_max} and its dual program is
\begin{equation*}
    \begin{aligned}
        \min_{t, \lambda} \quad & \langle \hat{x}, Q\hat{x} \rangle + 2\langle \hat{x}, q \rangle + \langle \hat{x}, \hat{x} \rangle - t \\
        \text{s.t.} \quad \ & \lambda \geq 0 \\
        & \begin{bmatrix}
        Q - I + \lambda A & q + \hat{x} + \lambda b \\
        * & \lambda c - t
        \end{bmatrix} \succcurlyeq 0,
    \end{aligned}
\end{equation*}
provided Slater’s constraint qualification is satisfied, i.e., there exists an $x$ with $\langle x, Ax \rangle + \langle x, b \rangle + c \leq 0$ \cite[Appendix B]{boyd2004convex}.

%%%%%%%%%%%%%%%%%%%%%%%%%%%%%%%%%%%%%%%%%%%%%%%%%%%%%%%%%%%%%%%%%%%%

\section{Proofs}
\label{app:proofs}

\subsection{Proof of Theorem \ref{theo:np_hard_circumcenter}}

Using the facts that $\|\tilde{\theta}\|_2 = \|\theta\|_2 = 1$, the range of the $\arccos$ is $[0,\pi]$, $-\cos(\gamma)$ is monotone increasing for $\gamma \in [0,\pi]$, and $-\langle \tilde{\theta}, \theta \rangle = \frac{1}{2}\| \tilde{\theta} - \theta \|_2^2 - 1$, evaluating the cost function of \eqref{eq:circumcenter} is equivalent to
\begin{equation}
\label{eq:cost_function1}
    \begin{aligned}
    \max_{\tilde{\theta}} \quad & \| \theta - \tilde{\theta} \|_2^2 \\
    \text{s.t.} \ \quad & \tilde{\theta} \in \mathbb{C}, \quad \|\tilde{\theta}\|_2 = 1.
    \end{aligned}
\end{equation}
Next, applying the change of variables $\tilde{\theta} \mapsto R\tilde{\theta}$, where $R \in \mathbb{R}^{n \times n}$ is an orthonormal matrix with its first column equal to $\theta$, \eqref{eq:cost_function1} is equivalent to
\begin{equation}
\label{eq:cost_function2}
    \begin{aligned}
    \max_{\tilde{\theta}} \quad & \| e_1 - \tilde{\theta} \|_2^2 \\
    \text{s.t.} \ \quad & R\tilde{\theta} \in \mathbb{C}, \quad \|\tilde{\theta}\|_2 = 1.
    \end{aligned}
\end{equation}
Then, defining $\mathbb{C} \coloneqq \big\{R\tilde{\theta} = (x, y) \in \mathbb{R} \times \mathbb{R}^{n-1} : a_ix + \langle b_i, y \rangle \leq 0 \ \forall i \in [N], \ -x + \langle e_j, y \rangle \leq 0  \ \text{and} \ -x - \langle e_j, y \rangle \leq 0 \ \forall j \in [n-1] \big\}$, we can rewrite \eqref{eq:cost_function2} as

\begin{equation}
\label{eq:cost_function3}
    \begin{aligned}
    \max_{x, y} \quad & (x-1)^2 + \|y\|_2^2 \\
    \text{s.t.} \ \quad & a_ix + \langle b_i, y \rangle \leq 0 & \forall i \in [N] \\
    & -x + \langle e_j, y \rangle \leq 0 & \forall j \in [n-1] \\
    & -x - \langle e_j, y \rangle \leq 0 & \forall j \in [n-1] \\
    & x^2 + \|y\|_2^2 = 1.
    \end{aligned}
\end{equation}
Next, we change the objective function of \eqref{eq:cost_function3} from $f(x,y) = (x-1)^2 + \|y\|_2^2$ to $g(x,y) = \|(1/x)y\|_2^2$. The resulting optimization problem is equivalent to \eqref{eq:cost_function3} because $g$ is a strictly positive monotonic transformation of $f$, that is, for any feasible $(x_1, y_1)$ and $(x_2, y_2)$, we have that
\begin{figure}[h]
\centering
\begin{subfigure}[c]{0.5\textwidth}
    \centering
    \begin{align*}
        f(x_1,y_1) &< f(x_2,y_2) \\
        &\iff (x_1-1)^2 + \|y_1\|_2^2 < (x_2-1)^2 + \|y_2\|_2^2 \\ 
        &\iff -2x_1 < -2x_2 \\
        &\iff \frac{1}{x_1^2} < \frac{1}{x_2^2} \\
        &\iff \frac{1 - x_1^2}{x_1^2} < \frac{1 - x_2^2}{x_2^2} \\
        &\iff \left\|\frac{1}{x_1}y_1\right\|^2_2 < \left\|\frac{1}{x_2}y_2\right\|^2_2 \\
        &\iff g(x_1,y_1) < g(x_2,y_2),
    \end{align*}
\end{subfigure}
\hfill
\begin{subfigure}[c]{0.35\textwidth}
    \centering
    \includegraphics[width=\textwidth]{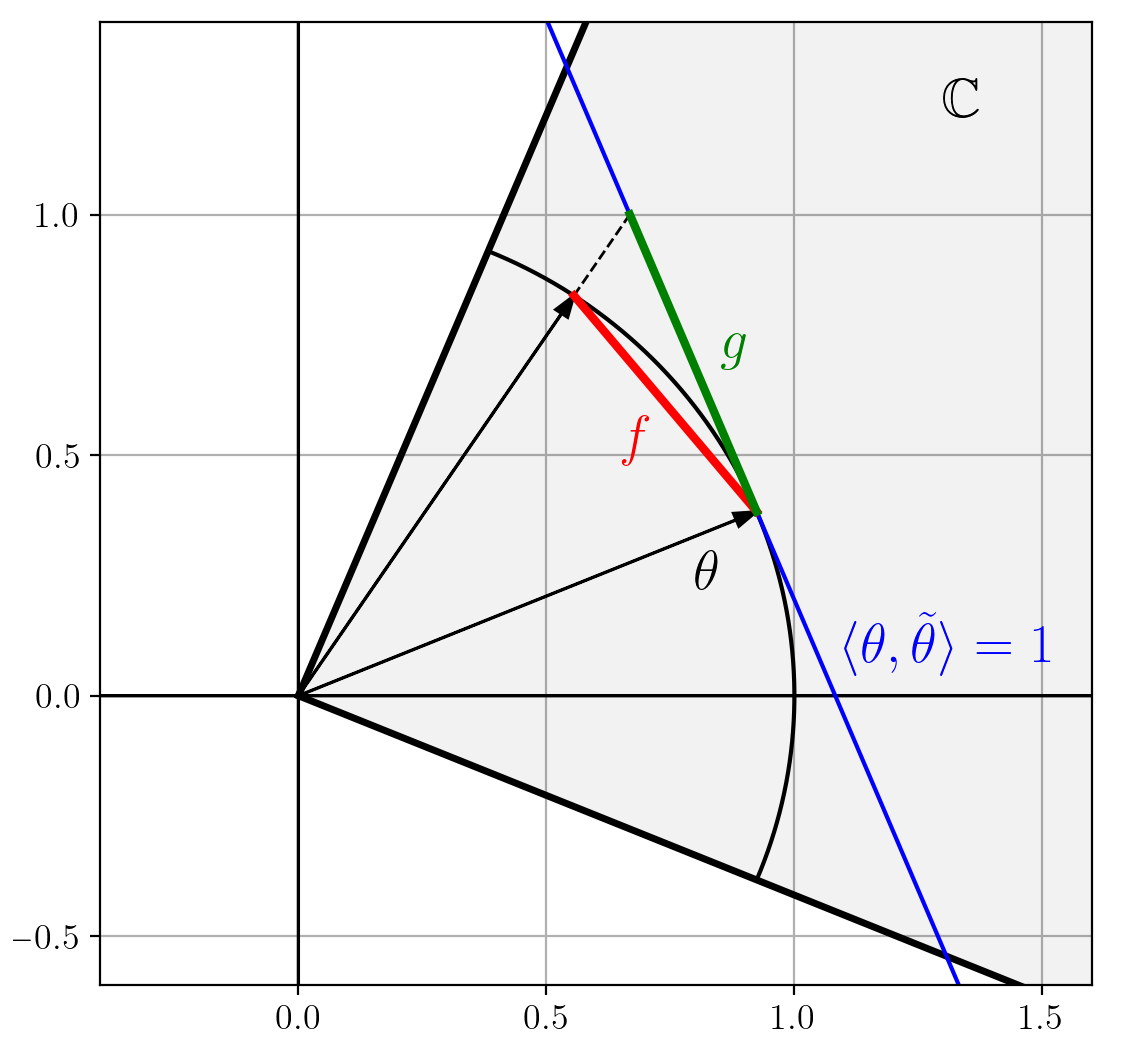}
    \captionsetup{labelformat=empty}
    \caption{Illustration of the monotonicity of the transformation from $f$ to $g$.}
\end{subfigure}
\end{figure}

\noindent
where we used the fact that from the inequalities $-x \pm \langle e_j, y \rangle \leq 0$ and the equality $x^2 + \|y\|_2^2 = 1$, any feasible $x$ is strictly larger than $0$. Thus, \eqref{eq:cost_function3} is equivalent to
\begin{equation}
\label{eq:cost_function4}
    \begin{aligned}
    \max_{x, y, z} \quad & \|z\|_2^2 \\
    \text{s.t.} \ \quad & a_ix + \langle b_i, y \rangle \leq 0 & \forall i \in [N] \\
    & -x + \langle e_j, y \rangle \leq 0 & \forall j \in [n-1] \\
    & -x - \langle e_j, y \rangle \leq 0 & \forall j \in [n-1] \\
    & x^2 + \|y\|_2^2 = 1, \quad z = \frac{1}{x}y.
    \end{aligned}
\end{equation}
Next, substituting $y = xz$ in the inequality constraints, and using the fact that $x>0$ to rearrange the constraints, we arrive at
\begin{equation}
\label{eq:cost_function6}
    \begin{aligned}
    \max_{z} \quad & \|z\|_2^2 \\
    \text{s.t.} \ \quad & \langle b_i, z \rangle \leq -a_i & \forall i \in [N] \\
    & -1 \leq z \leq 1,
    \end{aligned}
\end{equation}
where we dropped the equality constraints $x = 1/\sqrt{1 + \|z\|^2_2}$ and $y = (1/\sqrt{1 + \|z\|^2_2})z$, since they are the only ones that depend on $x$ and $y$. Therefore, we have shown that evaluating the cost function of the circumcenter problem is equivalent to a quadratic maximization problem over a polytope, which is NP-hard \citep{sahni1974computationally}.

\subsection{Proof of Theorem \ref{theo:incenter_reformulation}}

First, the non-zero constraints can be enforced via $\|\theta\|_2 = \|\tilde{\theta}\|_2 = 1$, and similar to the proof of Theorem \ref{theo:np_hard_circumcenter}, one can show that
$$
\argmax_{\|\theta\|_2 = 1} \min_{\substack{\tilde{\theta} \in \overline{\text{int}(\mathbb{C})} \\ \|\tilde{\theta}\|_2 = 1}} \ a(\theta, \tilde{\theta}) = \argmax_{\|\theta\|_2 = 1} \min_{\substack{\tilde{\theta} \in \overline{\text{int}(\mathbb{C})} \\ \|\tilde{\theta}\|_2 = 1}} \ \|\theta - \tilde{\theta}\|^2_2.
$$
Since for any $\theta \in \overline{\text{int}(\mathbb{C})}$, we can always choose $\tilde{\theta} = \theta$ and have $\|\theta - \tilde{\theta}\|^2_2 = 0$, the $\theta$ that maximizes this minimax problem will always be in $\mathbb{C}$. Consequently, to minimize the distance to any $\theta \in \mathbb{C}$, we have that the optimal $\tilde{\theta}$ will always be in the boundary of $\mathbb{C}$. Using these observations, one can show the following equivalence for the inner minimization problem:
\begin{equation}
    \label{eq:bd_to_hyperplane}
    \min_{\substack{\tilde{\theta} \in \overline{\text{int}(\mathbb{C})} \\ \|\tilde{\theta}\|_2 = 1}} \ \|\theta - \tilde{\theta}\|_2^2 = \min_{\substack{x_i \in \mathbb{X}(\hat{s}_i) \\ i \in [N]}} \ \left\{\min_{\substack{\inner{\tilde{\theta}}{\phi(\hat{s}_i,x_i) - \phi(\hat{s}_i,\hat{x}_i)} = 0 \\ \|\tilde{\theta}\|_2 = 1}} \ \|\theta - \tilde{\theta}\|_2^2 \right\},
\end{equation}
which follows from the fact that the optimal $\tilde{\theta}$ being in boundary of $\mathbb{C}$ implies $\inner{\tilde{\theta}}{\phi(\hat{s}_i,x_i) - \phi(\hat{s}_i,\hat{x}_i)} = 0$ for some $i \in [N]$ and $x_i\in \mathbb{X}(\hat{s}_i)$ (see Definition \ref{def:consistent}).

Next, we derive a closed-form solution to the minimization problems inside curly brackets in \eqref{eq:bd_to_hyperplane}. Namely, we show that
\begin{equation}
    \label{eq:norm_proj_hyper}
    \argmin_{\substack{\inner{\tilde{\theta}}{w} = 0 \\ \|\tilde{\theta}\|_2 = 1}} \|\theta - \tilde{\theta}\|_2^2 = \frac{p(\theta,w)}{\|p(\theta,w)\|_2},
\end{equation}
where
\begin{equation}
    \label{eq:proj_hyper}
    p(\theta,w) \coloneqq \argmin_{\inner{\tilde{\theta}}{w} = 0} \|\theta - \tilde{\theta}\|_2^2 = \theta - \frac{\inner{\theta}{w}}{\|w\|_2^2}w
\end{equation}
is the euclidean projection of $\theta$ onto the hyperplane defined by $w$. We prove \eqref{eq:norm_proj_hyper} by contradiction. Namely, we show that if there exists a vector $\alpha$ with lower cost value than $p(\theta,w) / \|p(\theta,w)\|_2$ for \eqref{eq:norm_proj_hyper}, this contradicts the optimality of $p(\theta,w)$ for problem \eqref{eq:proj_hyper}. Thus, let $\alpha \in \mathbb{R}^p$ be such that $\inner{\alpha}{w} = 0$ and $\|\alpha\|_2 = 1$, and assume
$$
    \left\|\theta - \frac{p(\theta,w)}{\|p(\theta,w)\|_2}\right\|_2^2 > \|\theta - \alpha\|_2^2.
$$
Consequently, we have that
\begin{align*}
    \left\|\theta - \frac{p(\theta,w)}{\|p(\theta,w)\|_2}\right\|_2^2 > \|\theta - \alpha\|_2^2
    &\iff 1 + 1 - \frac{2}{\|p(\theta,w)\|_2}\inner{\theta}{p(\theta,w)} > 1 + 1 - 2\inner{\theta}{\alpha} \\
    &\iff - 2\inner{\theta}{p(\theta,w)} > - 2\|p(\theta,w)\|_2\inner{\theta}{\alpha},
\end{align*}
which follows by expanding the norm squared and the facts that $\|\theta\|_2 = \|\alpha\|_2 = 1$. Adding $\|\theta\|_2^2 + \|p(\theta,w)\|_2^2$ to both sides and using the identity $\|\theta - p(\theta,w)\|_2^2 = \|\theta\|_2^2 + \|p(\theta,w)\|_2^2 - 2\inner{\theta}{p(\theta,w)}$, we get that
$$
\left\|\theta - \frac{p(\theta,w)}{\|p(\theta,w)\|_2}\right\|_2^2 > \|\theta - \alpha\|_2^2 \iff \|\theta - p(\theta,w)\|_2^2 > \|\theta\|_2^2 + \|p(\theta,w)\|_2^2 - 2\|p(\theta,w)\|_2\inner{\theta}{\alpha}.
$$
Next, define $\beta \coloneqq \|p(\theta,w)\|_2\alpha$, and notice that $\inner{\beta}{w} = 0$ and $\|\beta\|_2^2 = \|p(\theta,w)\|_2^2$. Thus, we arrive at
$$
\left\|\theta - \frac{p(\theta,w)}{\|p(\theta,w)\|_2}\right\|_2^2 > \|\theta - \alpha\|_2^2 \iff \|\theta - p(\theta,w)\|_2^2 > \|\theta\|_2^2 + \|\beta\|_2^2 - 2\inner{\theta}{\beta} = \|\theta - \beta\|_2^2.
$$
Therefore, since $\|\theta - p(\theta,w)\|_2^2 > \|\theta - \beta\|_2^2$ contradicts the optimality of $p(\theta,w)$ in \eqref{eq:proj_hyper}, \eqref{eq:norm_proj_hyper} follows by contradiction.

Combining \eqref{eq:bd_to_hyperplane} and \eqref{eq:norm_proj_hyper}, we have that
$$
\min_{\substack{\tilde{\theta} \in \overline{\text{int}(\mathbb{C})} \\ \|\tilde{\theta}\|_2 = 1}} \ \|\theta - \tilde{\theta}\|_2^2 = \min_{\substack{x_i \in \mathbb{X}(\hat{s}_i) \\ i \in [N]}} \ \left\|\theta - \frac{p(\theta,\phi(\hat{s}_i,x_i) - \phi(\hat{s}_i,\hat{x}_i))}{\|p(\theta,\phi(\hat{s}_i,x_i) - \phi(\hat{s}_i,\hat{x}_i))\|_2}\right\|_2^2.
$$
To conclude the proof, notice that
\begin{subequations}
\begin{align}
    \argmin_{w} \ \left\| \theta - \frac{p(\theta,w)}{\|p(\theta,w)\|_2} \right\|^2_2 
    &= \argmin_{w} \ a\left( \theta, \frac{p(\theta,w)}{\|p(\theta,w)\|_2} \right) \label{eq:a} \\
    &= \argmin_{w} \ a\left( \theta, p(\theta,w) \right) \nonumber\\
    &= \argmin_{w} \ \sin \left(a(\theta, p(\theta,w)) \right) \label{eq:b} \\
    &= \argmin_{w} \ \left\| \theta - p(\theta,w) \right\|^2_2 \label{eq:c} \\
    &= \argmin_{w} \ \frac{|\inner{\theta}{w}|}{\|w\|_2} \label{eq:d},
\end{align}
\end{subequations}
where \eqref{eq:a} follows from the definition of angle, \eqref{eq:b} follows from $a(\theta, p(\theta,w)) \leq \pi/2$, \eqref{eq:c} follows from $p(\theta,w) \perp \theta - p(\theta,w)$ and \eqref{eq:d} follows from Eq. \eqref{eq:proj_hyper}. Putting it together, we conclude that
$$
\theta^\text{in} \in \argmax_{\|\theta\|_2 = 1} \min_{\substack{\tilde{\theta} \in \overline{\text{int}(\mathbb{C})} \\ \|\tilde{\theta}\|_2 = 1}} \ a(\theta, \tilde{\theta}) = \argmax_{\|\theta\|_2 = 1} \min_{\substack{x_i \in \mathbb{X}(\hat{s}_i), i \in [N]}} \ \frac{|\inner{\theta}{\phi(\hat{s}_i,x_i) - \phi(\hat{s}_i,\hat{x}_i)}|}{\|\phi(\hat{s}_i,x_i) - \phi(\hat{s}_i,\hat{x}_i)\|_2}.
$$
Applying an epigraph reformulation and rearranging the resulting inequality constraints ends the proof.

\subsection{Proof of Corollary \ref{coro:incenter_convex_reformulation}}

First, we show that the assumption that $\text{int}(\mathbb{C}) \neq \emptyset$ implies that the optimal $r$ of problem \eqref{eq:incenter_reformulation} is positive. To see this, notice that $\text{int}(\mathbb{C}) \neq \emptyset$ implies that there exists some $\theta \in \mathbb{R}^p$ such that $\inner{\theta}{\phi(\hat{s}_i,x_i) - \phi(\hat{s}_i,\hat{x}_i)} > 0, \ \forall x_i \in \mathbb{X}(\hat{s}_i), \ \forall i \in [N]$, which follows Definition \ref{def:consistent}. Next, simply notice that 
$$
r = \frac{\min_{x_i \in \mathbb{X}(\hat{s}_i), i \in [N]} \ \inner{\theta}{\phi(\hat{s}_i,x_i) - \phi(\hat{s}_i,\hat{x}_i)}}{\max_{x_i \in \mathbb{X}(\hat{s}_i), i \in [N]} \ \| \phi(\hat{s}_i,x_i) - \phi(\hat{s}_i,\hat{x}_i) \|_2} > 0
$$
is a feasible $r$ for problem \eqref{eq:incenter_reformulation}, which shows that the optimal $r>0$. Thus, problem \eqref{eq:incenter_reformulation} can be written as
\begin{align*}
    \min_{\theta, r} \quad & \|\theta\|_2/r \\
    \text{s.t.} \ \quad & \inner{\theta}{\phi(\hat{s}_i,\hat{x}_i) - \phi(\hat{s}_i,x_i)} + r\| \phi(\hat{s}_i,x_i) - \phi(\hat{s}_i,\hat{x}_i) \|_2 \leq 0 \quad\quad \forall x_i \in \mathbb{X}(\hat{s}_i), \ \forall i \in [N] \\
    & \|\theta\|_2 = 1,
\end{align*}
where we used the facts that $\|\theta\|_2 = 1$ and the optimal $r > 0$. Next, applying the change of variables $\theta/r \to \bar{\theta}$, we get

\begin{align*}
    \min_{\bar{\theta}, r} \quad & \|\bar{\theta}\|_2 \\
    \text{s.t.} \ \quad & \inner{\bar{\theta}}{\phi(\hat{s}_i,\hat{x}_i) - \phi(\hat{s}_i,x_i)} + \| \phi(\hat{s}_i,x_i) - \phi(\hat{s}_i,\hat{x}_i) \|_2 \leq 0 \quad\quad \forall x_i \in \mathbb{X}(\hat{s}_i), \ \forall i \in [N] \\
    & \|\bar{\theta}\|_2 = 1/r.
\end{align*}
Finally, notice that since the optimization variable $r$ only appears in the constraint $\|\bar{\theta} \|_2 = 1/r$, and for any $\bar{\theta}$ we can set $r = 1/\|\bar{\theta} \|_2$, the optimization variable $r$ does not affect the optimal value of $\bar{\theta}$, and we can simply drop the constraint $\|\bar{\theta}\|_2 = 1/r$.

\subsection{Proof of Theorem \ref{theo:MI_reformulation}}

For the feasible set \eqref{eq:MI_set} and hypothesis function \eqref{eq:MI_hypothesis}, the \ref{eq:ASL} with $d(\hat{x}, x) = \|\hat{y} - y\|_\infty + d_z(\hat{z}, z)$ can be written as
\begin{align*}
\max_{\substack{z \in \mathbb{Z}(\hat{w}) \\ h \in \mathbb{H}}} \max_{y} \quad & \inner{\theta}{\phi(\hat{s},\hat{x})} - \langle y, Q_{yy}y \rangle - \langle y, Q \phi_1(\hat{w}, z) \rangle - \langle q, \phi_2(\hat{w}, z) \rangle + \langle h, \hat{y} - y \rangle + d_z(\hat{z}, z)   \\
\text{s.t.} \ \quad & \hat{A}y + \hat{B}z \leq \hat{c},
\end{align*}
where we use the identity $\|\hat{y} - y\|_\infty = \max_{h \in \mathbb{H}} \langle h, \hat{y} - y \rangle$ with $\mathbb{H} \coloneqq \{h \in \{-1, 0, 1\}^u : \|h\|_1 = 1\}$. Since we assume $Q_{yy} \succcurlyeq 0$, the inner maximization problem is convex, and by strong duality, we can write its dual as
\begin{align*}
\max_{\substack{z \in \mathbb{Z}(\hat{w}) \\ h \in \mathbb{H}}} \min_{\alpha, \lambda} \quad & \inner{\theta}{\phi(\hat{s},\hat{x})} + \alpha + \langle \lambda, \hat{c}-\hat{B}z \rangle - \langle q, \phi_2(\hat{w},z) \rangle + \langle h, \hat{y} \rangle + d_z(\hat{z}, z)  \\
\text{s.t.} \ \quad & \| Q \phi_1(\hat{w}, z) + h + \hat{A}^\top \lambda \|^2_{Q_{yy}^\dagger} \leq 4\alpha \\
& Q \phi_1(\hat{w}, z) + h + \hat{A}^\top \lambda \in \mathcal{R}(Q_{yy}) \\
& \lambda \geq 0.
\end{align*}
Next, applying a Schur complement transformation (e.g., \cite[Section A.5.5]{boyd2004convex}) to the constraints of the optimization problem above, followed by an epigraph transformation, we have
\begin{equation}
    \label{eq:maxmin1}
    \begin{aligned}
    \max_{\substack{z \in \mathbb{Z}(\hat{w}) \\ h \in \mathbb{H}}} \min_{\alpha, \beta, \lambda} \quad & \beta  \\
    \text{s.t.} \ \quad & \inner{\theta}{\phi(\hat{s},\hat{x})} + \alpha + \langle \lambda, \hat{c} - \hat{B}z \rangle - \langle q, \phi_2(\hat{w},z) \rangle + \langle h, \hat{y} \rangle + d_z(\hat{z}, z) \leq \beta \\
    & \begin{bmatrix}
    Q_{yy} & Q\phi_1(\hat{w},z) + h + \hat{A}^\top \lambda \\
    * & 4\alpha
    \end{bmatrix} \succcurlyeq 0 \\
    & \lambda \geq 0.
    \end{aligned}
\end{equation}
To reformulate \eqref{eq:maxmin1} into a single minimization problem, we use the fact that
\begin{equation}
\label{eq:opt_equivalence}
    \left\{ \begin{aligned}
    \max_{x \in \{x_1,\ldots,x_M\}} \min_{t, \delta} \quad & t \\
    \text{s.t.} \quad & g(\delta,x) \leq t
    \end{aligned} \right\}
    =
    \left\{ \begin{aligned}
    \min_{t, \delta_1,\ldots,\delta_M} \quad & t  \\
    \text{s.t.} \quad \quad & g(\delta_j,x_j) \leq t, \quad \forall j \in [M]
    \end{aligned} \right\},
\end{equation}
for a general function $g$, which follows from the observation that
\begin{align*}
    \max_{x \in \{x_1,\ldots,x_M\}} \min_{\delta} \ g(\delta,x) &= \max \{ \min_{\delta_1} g(\delta_1,x_1), \ldots, \min_{\delta_M} g(\delta_M,x_M) \} \\
    &= \min_{\delta_1,\ldots,\delta_M} \max \{ g(\delta_1,x_1), \ldots, g(\delta_M,x_M) \}.
\end{align*}
Combining \eqref{eq:maxmin1} with \eqref{eq:opt_equivalence}, we arrive at
\begin{equation}
\label{eq:ASL_MI_reformulation}
    \begin{aligned}
    \min \quad & \beta  \\
    \text{s.t.} \quad & \theta = (\text{vec}(Q_{yy}), \text{vec}(Q), q) \in \Theta, \quad \lambda_{jk}\geq 0, \quad \alpha_{jk}, \beta \in \mathbb{R} \\
    & \inner{\theta}{\phi(\hat{s},\hat{x})} + \alpha_{jk} + \langle \lambda_{jk}, \hat{c}-\hat{B}z_j \rangle - \langle q, \phi_2(\hat{w},z_j) \rangle + \langle h_k, \hat{y} \rangle + d_z(\hat{z}, z_j) \leq \beta \\
    & \begin{bmatrix}
    Q_{yy} & Q \phi_1(\hat{w},z_j) + h_k + \hat{A}^\top \lambda_{jk} \\
    * & 4\alpha_{jk}
    \end{bmatrix} \succcurlyeq 0,
    \end{aligned}
\end{equation}
where the constraints are $\forall (j, k) \in [M] \times [2u]$, $\mathbb{Z}(\hat{w}) \coloneqq \{z_{1}, \ldots, z_{M_i}\}$, $M \coloneqq |\mathbb{Z}(\hat{w})|$, and if $k \leq u$ ($k > u$),  $h_k$ is the vector of zeros except for the $k$'th $k$'th ($(k-u)$'th) element, which is equal to 1 (-1). Finally, plugging \eqref{eq:ASL_MI_reformulation} into problem \eqref{eq:reg_loss_minimization}, we arrive at
\begin{equation*}
    \begin{aligned}
    \min \ & \kappa\mathcal{R}(\theta) + \frac{1}{N}\sum_{i=1}^N \beta_i  \\
    \text{s.t.} \ & \theta = (\text{vec}(Q_{yy}), \text{vec}(Q), q) \in \Theta, \quad \lambda_{ijk}\geq 0, \quad \alpha_{ijk}, \beta_i \in \mathbb{R} \\
    & \inner{\theta}{\phi(\hat{s}_i,\hat{x}_i)} + \alpha_{ijk} + \langle \lambda_{ijk}, \hat{c}_i-\hat{B}_iz_{ij} \rangle - \langle q, \phi_2(\hat{w}_i,z_{ij}) \rangle + \langle h_k, \hat{y}_i \rangle + d_z(\hat{z}_i, z_{ij}) \leq \beta_i \\
    & \begin{bmatrix}
    Q_{yy} & Q \phi_1(\hat{w}_i, z_{ij}) + h_k + \hat{A}_i^\top \lambda_{ijk} \\
    * & 4\alpha_{ijk}
    \end{bmatrix} \succcurlyeq 0,
    \end{aligned}
\end{equation*}
where the constraints are $\forall (i, j, k) \in [N] \times [M_i] \times [2u]$.

\subsection{Proof of Proposition \ref{prop:samd_rate}}

Let $\theta^\star \in \argmin_{\theta \in \Theta} f(\theta)$. By the definition of an $\varepsilon$-subgradient, we have
\begin{equation}
    \label{eq:b1}
    \begin{aligned}
        \mathbb{E}\left[f(\theta_t) - f(\theta^\star)\right] 
        &\leq \mathbb{E}\left[\inner{g_{\varepsilon_t}(\theta_t)}{\theta_t - \theta^\star} + \varepsilon_t\right] \\
        &= \mathbb{E}\left[\inner{\mathbb{E}\left[\tilde{g}_{\varepsilon_t}(\theta_t) \ | \ \theta_t\right]}{\theta_t - \theta^\star} + \varepsilon_t\right] \\
        &= \mathbb{E}\left[\inner{\tilde{g}_{\varepsilon_t}(\theta_t)}{\theta_t - \theta^\star} + \varepsilon_t\right],
    \end{aligned}
\end{equation}
where the first equality follows from the definition of a stochastic approximate subgradient and the second equality follows from the law of total expectation. Next, we use the standard inequality for the analysis of mirror-descent algorithms \cite[Proposition 5.1]{juditsky2011first_i}
\begin{equation}
    \label{eq:main_inequality}
    \inner{\tilde{g}_{\varepsilon_t}(\theta_t)}{\theta_t - \theta^\star} \leq \frac{1}{\eta_t} ( \mathcal{B}_\omega(\theta^\star,\theta_t) - \mathcal{B}_\omega(\theta^\star,\theta_{t+1}) ) + \frac{\eta_t}{2}\norm{\tilde{g}_{\varepsilon_t}(\theta_t)}_*^2.
\end{equation}
Summing \eqref{eq:main_inequality} from $t=1$ to $T$ and taking its expectation, we arrive at
\begin{equation}
    \label{eq:b2}
    \begin{aligned}
        \mathbb{E}\left[\sum_{t=1}^T \inner{\tilde{g}_{\varepsilon_t}(\theta_t)}{\theta_t - \theta^\star}\right] 
        &\leq \mathbb{E}\left[\sum_{t=1}^T \frac{1}{\eta_t} ( \mathcal{B}_\omega(\theta^\star,\theta_t) - \mathcal{B}_\omega(\theta^\star,\theta_{t+1}) ) + \frac{1}{2}\sum_{t=1}^T \eta_t\norm{\tilde{g}_{\varepsilon_t}}_*^2\right] \\
        &\leq \frac{R^2}{\eta_1} + R^2\sum_{t=2}^T \left( \frac{1}{\eta_t} - \frac{1}{\eta_{t-1}} \right) + \frac{G^2}{2}\sum_{t=1}^T \eta_t \\
        &= \frac{R^2}{\eta_T} + \frac{G^2}{2}\sum_{t=1}^T \eta_t \leq \left( \frac{R^2}{c} + cG^2\right)\sqrt{T},
    \end{aligned}
\end{equation}
where the second line follows from rearranging the sum and the boundedness assumptions, the third line follows from telescoping the sum, and the fourth line follows from the definition of $\eta_t$ and the inequality $\sum_{t=1}^T 1/\sqrt{t} \leq 2\sqrt{T}$. Finally, using Jensen's inequality, we arrive at
\begin{align*}
    \mathbb{E}\left[f\left(\frac{1}{T}\sum_{t=1}^T \theta_t\right) - f(\theta^\star) \right] 
    &\leq \frac{1}{T} \mathbb{E}\left[\sum_{t=1}^T (f(\theta_t) - f(\theta^\star)) \right] \\
    &\leq \frac{1}{T}
    \mathbb{E}\left[\sum_{t=1}^T \inner{\tilde{g}_{\varepsilon_t}(\theta_t)}{\theta_t - \theta^\star} +  \sum_{t=1}^T\varepsilon_t \right] \\
    &\leq \left( \frac{R^2}{c} + cG^2\right)\frac{1}{\sqrt{T}} + \frac{1}{T}\sum_{t=1}^T\varepsilon_t,
\end{align*}
where the second inequality follows from \eqref{eq:b1} and the third inequality follows from \eqref{eq:b2}. Next, we prove an improved convergence bound when $f = h + \mathcal{R}$, and $\mathcal{R}$ is relative $\alpha$-strongly convex. The proof is based on \citep{lacoste2012simpler}. In this case, similar to \eqref{eq:b1}, we have
\begin{equation*}
    \mathbb{E}\left[f(\theta_t) - f(\theta^\star)\right] 
    \leq \mathbb{E}\left[\inner{\tilde{g}_{\varepsilon_t}(\theta_t)}{\theta_t - \theta^\star} + \varepsilon_t- \alpha \mathcal{B}_w(\theta^\star,\theta_t)\right].
\end{equation*}
Invoking \eqref{eq:main_inequality}, multiplying both sides by $t$, and summing the resulting inequality from $t=1$ to $T$, we get
\begin{equation*}
    \begin{aligned}
        \mathbb{E}\left[\sum_{t=1}^T t(f(\theta_t) - f(\theta^\star))\right] 
        &\leq \mathbb{E}\left[\sum_{t=1}^T \left( \frac{t}{\eta_t} ( \mathcal{B}_\omega(\theta^\star,\theta_t) - \mathcal{B}_\omega(\theta^\star,\theta_{t+1}) ) + \frac{t\eta_t}{2} \norm{\tilde{g}_{\varepsilon_t}}_*^2 + t\varepsilon_t- \alpha t\mathcal{B}_w(\theta^\star,\theta_t) \right)\right] \\
        &\leq \left( \frac{1}{\eta_1} - \alpha \right)\mathcal{B}_\omega(\theta^\star,\theta_1) + \sum_{t=2}^T \left( \frac{t}{\eta_t} -\alpha t - \frac{t}{\eta_{t-1}} \right)\mathcal{B}_\omega(\theta^\star,\theta_t) + \frac{G^2}{2}\sum_{t=1}^T t\eta_t + \sum_{t=1}^T t\varepsilon_t.
    \end{aligned}
\end{equation*}
Next, by setting $\eta_t = 2/\alpha(t+1)$, it is easy to show that the sums between parenthesis become nonpositive, and we arrive at
\begin{equation}
    \label{eq:b3}
    \begin{aligned}
        \mathbb{E}\left[\sum_{t=1}^T t(f(\theta_t) - f(\theta^\star))\right] 
        &\leq \frac{G^2}{\alpha}\sum_{t=1}^T \frac{t}{(t+1)} + \sum_{t=1}^T t\varepsilon_t \leq \frac{G^2T}{\alpha} + \sum_{t=1}^T t\varepsilon_t.
    \end{aligned}
\end{equation}
Finally, using Jensen's inequality, we have that
\begin{equation*}
    \begin{aligned}
        \mathbb{E}\left[f\left(\frac{2}{T(T+1)}\sum_{t=1}^T t\theta_t \right) - f(\theta^\star)\right] 
        &\leq
        \frac{2}{T(T+1)}\mathbb{E}\left[\sum_{t=1}^T t(f(\theta_t) - f(\theta^\star))\right]  \\
        &\leq \frac{2G^2}{\alpha(T+1)} + \frac{2}{T(T+1)}\sum_{t=1}^T t\varepsilon_t,
    \end{aligned}
\end{equation*}
where the second inequality follows from \eqref{eq:b3}.

\subsection{Proof of Lemma \ref{lemma:e-subgradient}}

This proof is based on \cite[Example 3.3.1]{bertsekas2015convex}. Recall the definition of $\ell_\theta$:
$$
\ell_\theta(\hat{s},\hat{x}) = \max_{x \in \mathbb{X}(\hat{s})} \left\{ \inner{\theta}{\phi(\hat{s},\hat{x}) - \phi(\hat{s},x)} + d(\hat{x},x) \right\}.
$$
Then, by the definition of $x_\varepsilon$ we have
\begin{align*}
    \ell_\theta(\hat{s},\hat{x}) - \ell_\nu(\hat{s},\hat{x})
    &= \max_{x \in \mathbb{X}(\hat{s})} \left\{ \inner{\theta}{\phi(\hat{s},\hat{x}) - \phi(\hat{s},x)} + d(\hat{x},x) \right\} - \max_{x \in \mathbb{X}(\hat{s})} \left\{ \inner{\nu}{\phi(\hat{s},\hat{x}) - \phi(\hat{s},x)} + d(\hat{x},x) \right\} \\
    &\leq \inner{\theta}{\phi(\hat{s},\hat{x}) - \phi(\hat{s},x_\varepsilon)} + d(\hat{x},x_\varepsilon)  + \varepsilon - \max_{x \in \mathbb{X}(\hat{s})} \left\{ \inner{\nu}{\phi(\hat{s},\hat{x}) - \phi(\hat{s},x)} + d(\hat{x},x) \right\}.
\end{align*}
Next, by the optimality of the $\max$, we arrive at
\begin{align*}
    \ell_\theta(\hat{s},\hat{x}) - \ell_\nu(\hat{s},\hat{x})
    &\leq \inner{\theta}{\phi(\hat{s},\hat{x}) - \phi(\hat{s},x_\varepsilon)} + d(\hat{x},x_\varepsilon)  + \varepsilon - \inner{\nu}{\phi(\hat{s},\hat{x}) - \phi(\hat{s},x_\varepsilon)} + d(\hat{x},x_\varepsilon) \\
    &= \inner{\phi(\hat{s},\hat{x}) - \phi(\hat{s},x_\varepsilon)}{\theta - \nu} + \varepsilon.
\end{align*}
This shows that $\phi(\hat{s},\hat{x}) - \phi(\hat{s},x_\varepsilon)$ is an $\varepsilon$-subgradient of $\ell_\theta$ w.r.t. $\theta$.

%%%%%%%%%%%%%%%%%%%%%%%%%%%%%%%%%%%%%%%%%%%%%%%%%%%%%%%%%%%%%%%%%%%%

\section{Further Numerical Results}
\label{app:further_numerical}

\subsection{Breast cancer Wisconsin prognostic dataset}
\label{sec:cancer}

In this numerical experiment, we present a way to model a real-world decision problem using our IO approaches. Namely, we use the Breast Cancer Wisconsin Prognostic (BCWP) dataset from the UCI Machine Learning Repository \citep{Dua:2019}. This dataset consists of real-world data from different breast cancer patients. For each patient, there are 30 features extracted from images of cells taken from breast lumps, plus the tumor size and the number of involved lymph nodes, for a total of 32 numerical features. Moreover, each case is classified as ``recurrent'' if the disease has recurred after surgery, and ``non-recurrent'' if the disease did not recur by the time of the patient's last check-up. For recurrent patients, there is also information on the time to recur (TTR), that is, how many months it took for the disease to recur. For non-recurrent patients, we have information on the disease-free survival time (DFST), that is, how many months it is known that the patient was disease-free after the surgery. The dataset consists of 198 distinct cases, 47 of which have recurred. Thus, given the 32 features, the goal is to predict if the disease will recur or not, as well as the TTR (if recurrent) or the DFST (if non-recurrent).

We interpret this problem as an IO problem: given a signal vector $\hat{w} \in \mathbb{R}^{32}$ (numerical vector with the 32 features), an expert agent (e.g., a doctor) returns a decision $(y, z) \in \mathbb{R} \times \{0,1\}$, where $y$ is the TTR/DFST and $z=1$ if the expert predicts the disease to return (recurrent patient), and $z=0$ otherwise (non-recurrent patient). In other words, given the data on some patients, the expert predicts if the disease will recur or not. If recurrent, the expert also predicts the TTR; if not recurrent, the expert predicts the DFST, which can be interpreted as a lower bound on how long the patient is expected to be disease-free. Thus, given the dataset of signal-decision data, we use IO to learn a cost function that when minimized, mimics the behavior of the expert. Since the decision of the expert comprises of continuous and discrete components (i.e., $\mathbb{X}(\hat{s}) = \left\{ (y,z) \in \mathbb{R} \times \{0,1\} : y \geq 0 \right\}$), we tackle this scenario using the mixed-integer approach proposed in Section \ref{sec:mixed}. Here we note that since the BCWP dataset comes from a real-world scenario, the choice of hypothesis function we use for the IO approach (in particular, the feature functions $\phi_1$ and $\phi_2$) is a modeling choice. We use the hypothesis function \eqref{eq:MI_hypothesis}, with $\phi_1(w,z) = \phi_2(w,z) = (w, z, zw, 1)$. Thus, we can use the reformulation \eqref{eq:MI_reformulation} to solve the problem, with $\hat{A} = -I$, $\hat{B} = \hat{c} = 0$, $\Theta = \mathbb{R}^n$, $d_z(\hat{z}_i, z_i) = |\hat{z}_i - z_i|$, and $\mathcal{R}(\theta) = \frac{1}{2}\|\theta\|^2_2$. We call this approach \textbf{ASL-yz}. Similar to the experiments in Section \ref{sec:mixed}, we also test an approach similar to \textbf{ASL-yz}, but with $h_k = 0 \ \forall k \in [2u]$ (see discussion in the paragraph after Corollary \ref{coro:MI_reformulation_linear}). We call this approach \textbf{ASL-z}. Finally, as a benchmark, we test a regression+classification approach, where we treat the problem as a separate regression (prediction of the TTR/DFST) and classification (recurrent/non-recurrent patient). For the regression task, we tested scikit-learn's epsilon-support vector regression, ordinary least squares linear regression, kernel ridge regression, multi-layer perceptron regressor, regression based on k-nearest neighbors, gaussian process regression, and a decision tree regressor. For the classification task, we tested scikit-learn's support vector classifier, logistic regression classifier, classifier implementing the k-nearest neighbors vote, gaussian process classification based on Laplace approximation, decision tree classifier, random forest classifier, multi-layer perceptron classifier, and an AdaBoost classifier. For this experiment, using scikit-learn's default tuning parameters, the methods with the best out-of-sample performance for the regression and classification tasks were the kernel ridge regression and support vector classifier, respectively.

\begin{table}
\centering
\begin{tabular}{c | c c c} 
\textbf{Approach} & ASL-z & ASL-yz & regression+classification \\
\hline
\textbf{$| y - y_\text{true} |$} & 51.17 & 27.33 & 27.44 \\
\hline
\textbf{$| z - z_\text{true} |$} & 20\% & 21\% & 21.25\%
\end{tabular}
\caption{Out-of-sample average error in months of the TTR/DFST prediction and out-of-sample average percentage error in the prognostic of recurrent vs non-recurrent patients.}
\label{table:out_breast_cancer_results}
\end{table}

To evaluate our approach, we generate 20 training and test datasets, where each training dataset is generated by randomly sampling 90\% of the original dataset, while the remaining 10\% is used as the test dataset, similar to a $20$-fold cross-validation procedure. Table \ref{table:out_breast_cancer_results} shows the out-of-sample results for approaches to this problem. In particular, it shows the average error for the continuous part of the decision variable, i.e., the average error in months of the TTR/DFST prediction, and the percentage error of the discrete part of the decision variable, i.e., the average error in the prognostic of recurrent vs non-recurrent patients. From these results, we can see that the ASL-yz approach has slightly better performance than the regression+classification approach and has a much smaller regression error than the ASL-y approach. Compared to other works that used the BCWP dataset to predict the cancer recurrence or recurrence time, our IO approach differs from them in two major ways: first, our approach requires no pre-processing of the data. For instance, \cite{zafiropoulos2006support} splits the data into different classes according to the TTR/DFST and learns one predictor per class, and \cite{street1995inductive} pre-processes the dataset using a feature selection procedure. In contrast, our OI approach requires no pre-processing of the dataset, although we believe its results may be improved after an appropriate pre-processing of the dataset. Second, our IO approach can predict the TTR/DFST and recurrence/non-recurrence \textit{simultaneously}. In machine learning terms, our IO approach does classification and regression at the same time. This differs from the approaches in the literature, which focus on either predicting the TTR/DFST or predicting recurrence/non-recurrence. For instance, \cite{street1995inductive} was able to achieve an average decision error of 13.9 months using leave-one-out testing and a feature selection procedure, and \cite{lee2001ssvm} was able to achieve a 16.53\% prediction error for positive vs negative cases, where a case is considered positive if a recurrence occurred before 24 months, and negative otherwise.

\subsection{In-sample results}
\label{app:in_sample_results}

In this section, we show the in-sample results for the numerical experiments of Section \ref{sec:numerical}. Namely, Figure \ref{fig:consistent_in} shows the in-sample counterpart of Figure \ref{fig:consistent_out}, Figure \ref{fig:inconsistent_in} shows the in-sample counterpart of Figure \ref{fig:inconsistent_out}, Figure \ref{fig:MILP_in} shows the in-sample counterpart of Figure \ref{fig:MILP_out}, Figure \ref{fig:FOM_all_in} shows the in-sample counterpart of Figure \ref{fig:FOM_all_out}, and Table \ref{table:in_breast_cancer_results} shows the in-sample counterpart of Table \ref{table:out_breast_cancer_results}. Notice that since we always learn the IO cost vector using the training data, the difference between the true cost vector and the one learned using IO is independent of in- or out-of-sample results, thus, this plot is not shown in this section. Moreover, for the cases when the training dataset is noisy, we do not show the plot that compares the difference between the cost of the expert decisions with the cost of the decisions using $\theta_{\text{IO}}$.

\begin{figure}
\centering
\captionsetup[subfigure]{width=0.96\linewidth}%
    \begin{subfigure}[t]{0.38\linewidth}
        \includegraphics[width = \linewidth]{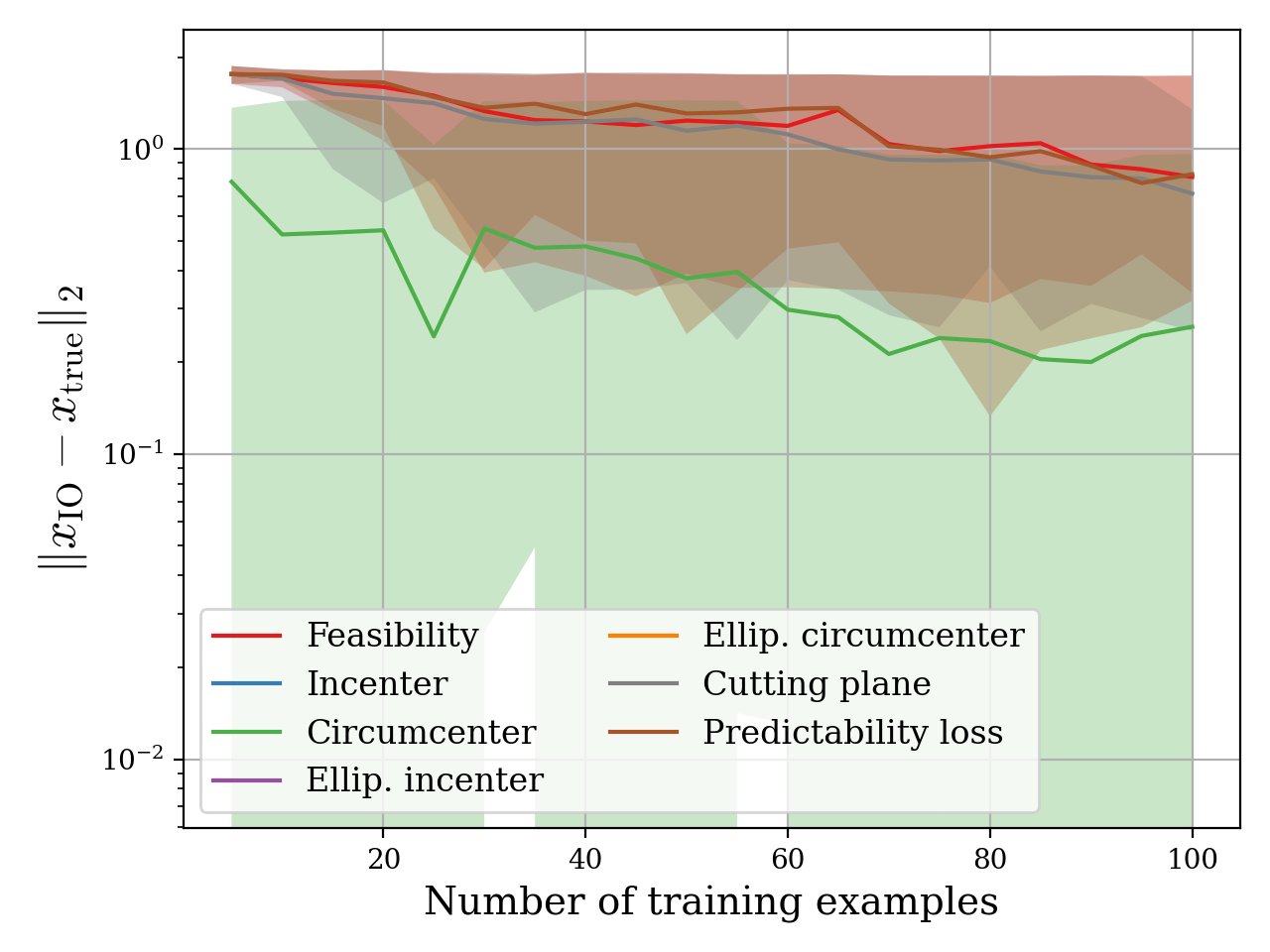}
        \caption{Average error between the decision generated by $\theta_{\text{true}}$ and $\theta_{\text{IO}}$.}
        \label{fig:consistent_x_diff_in}
    \end{subfigure}
    \begin{subfigure}[t]{0.38\linewidth}
        \includegraphics[width = \linewidth]{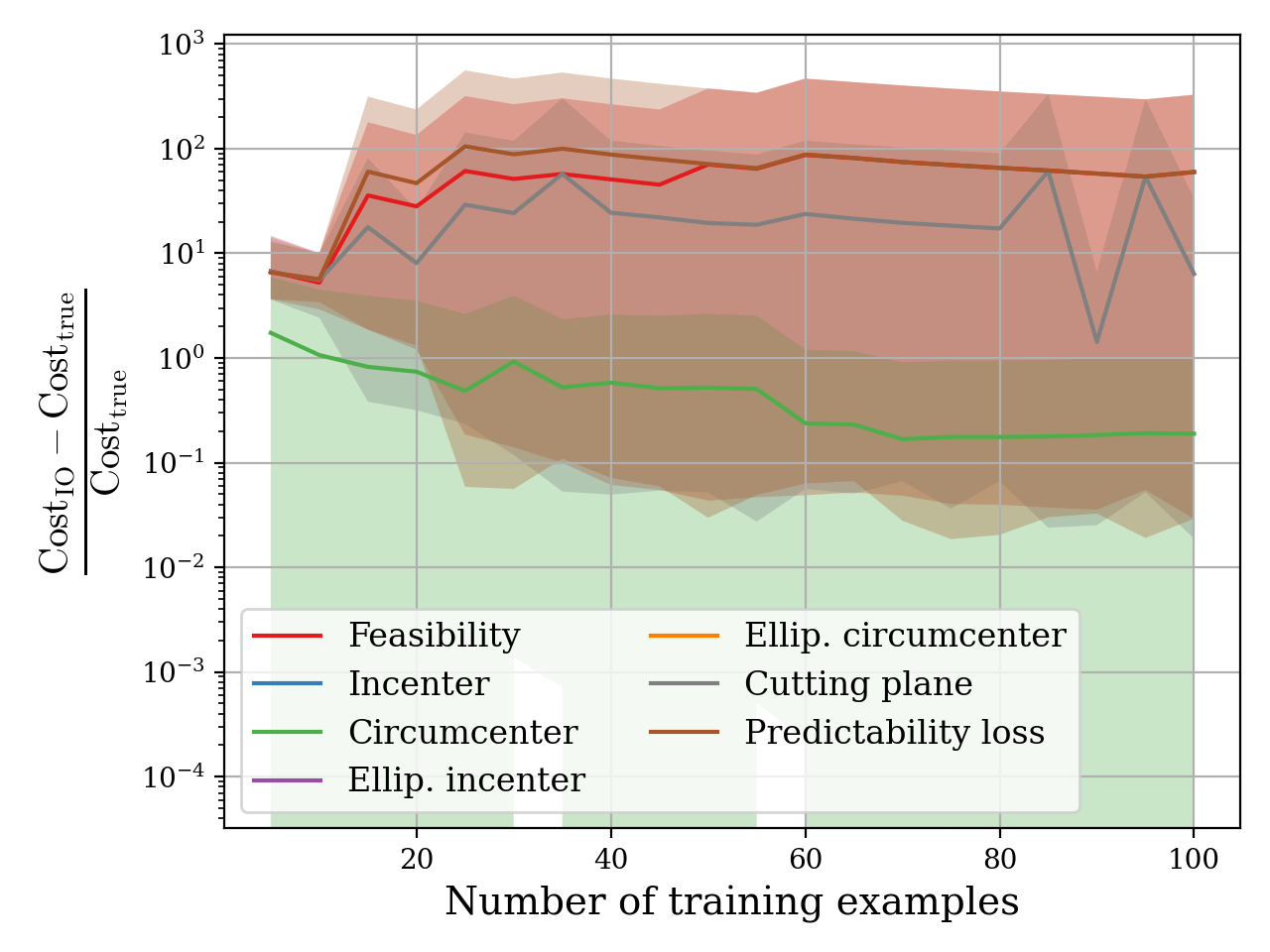}
        \caption{Relative difference between the cost of the decisions generated using $\theta_{\text{true}}$ and $\theta_{\text{IO}}$.}
        \label{fig:consistent_obj_diff_in}
    \end{subfigure}
\caption{In-sample results for consistent data scenario. The results for the incenter approach do not appear in the plots because they are zero for all number of training examples tested.}
\label{fig:consistent_in}
\end{figure}

\begin{figure}
\centering
\includegraphics[width = 0.38\linewidth]{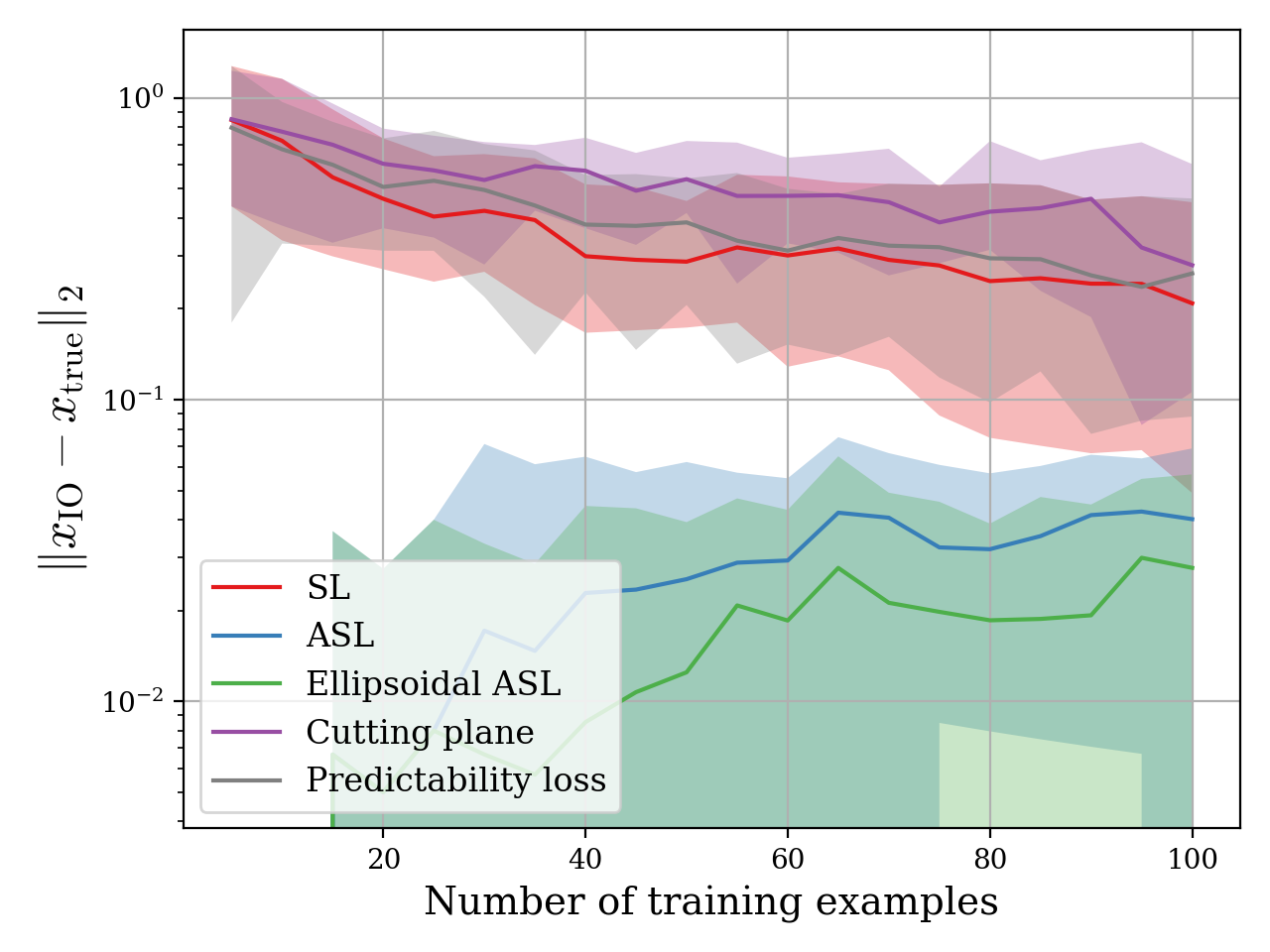}
\caption{In-sample results for inconsistent data scenario. Average error between the decision generated by $\theta_{\text{true}}$ and $\theta_{\text{IO}}$.}
\label{fig:inconsistent_in}
\end{figure}

\begin{figure}
\centering
\captionsetup[subfigure]{width=0.96\linewidth}%
    \begin{subfigure}[t]{0.38\linewidth}
        \includegraphics[width = \linewidth]{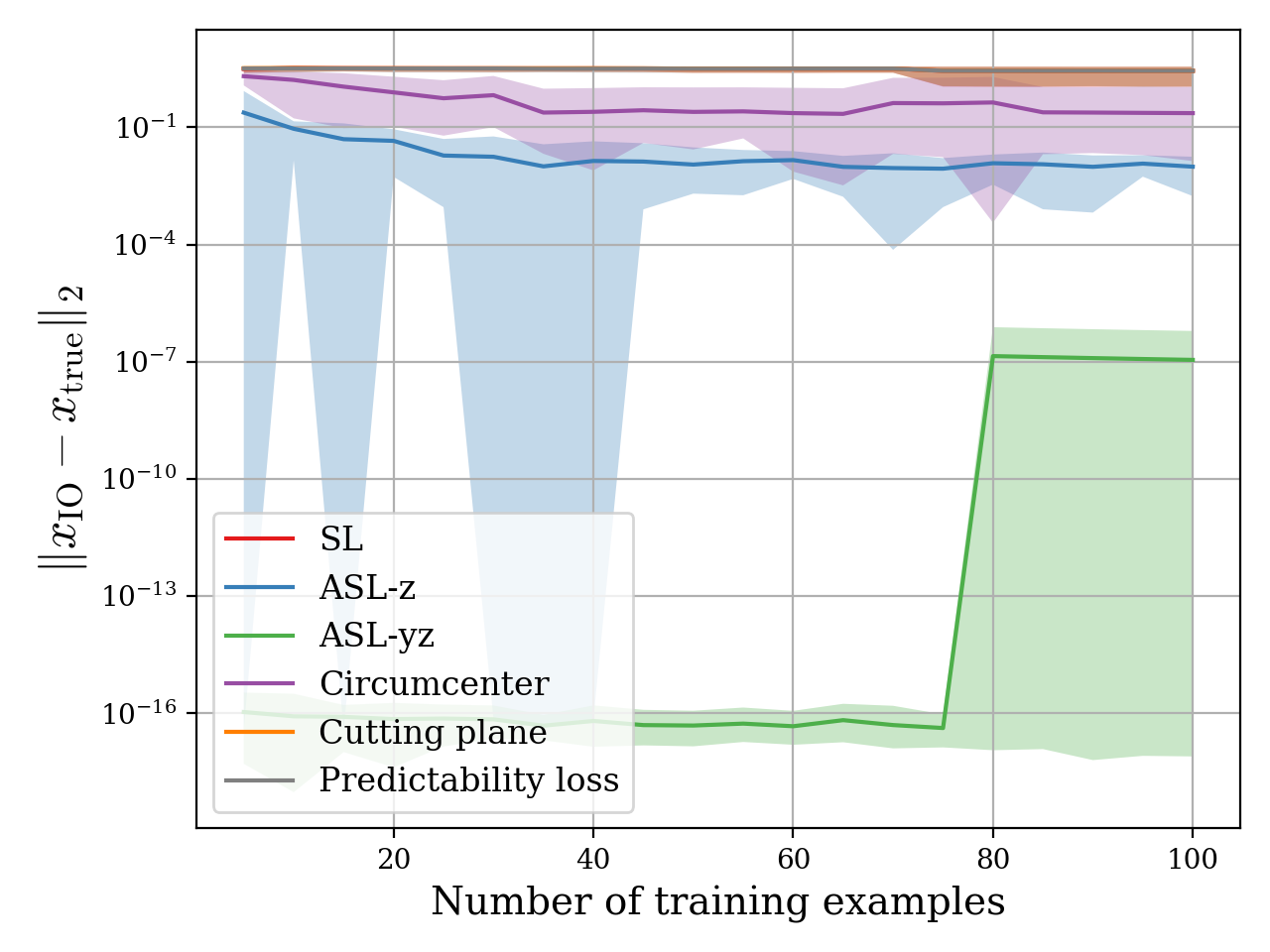}
        \caption{Average error between the decision generated by $\theta_{\text{true}}$ and $\theta_{\text{IO}}$.}
        \label{fig:MILP_x_diff_in}
    \end{subfigure}
    \begin{subfigure}[t]{0.38\linewidth}
        \includegraphics[width = \linewidth]{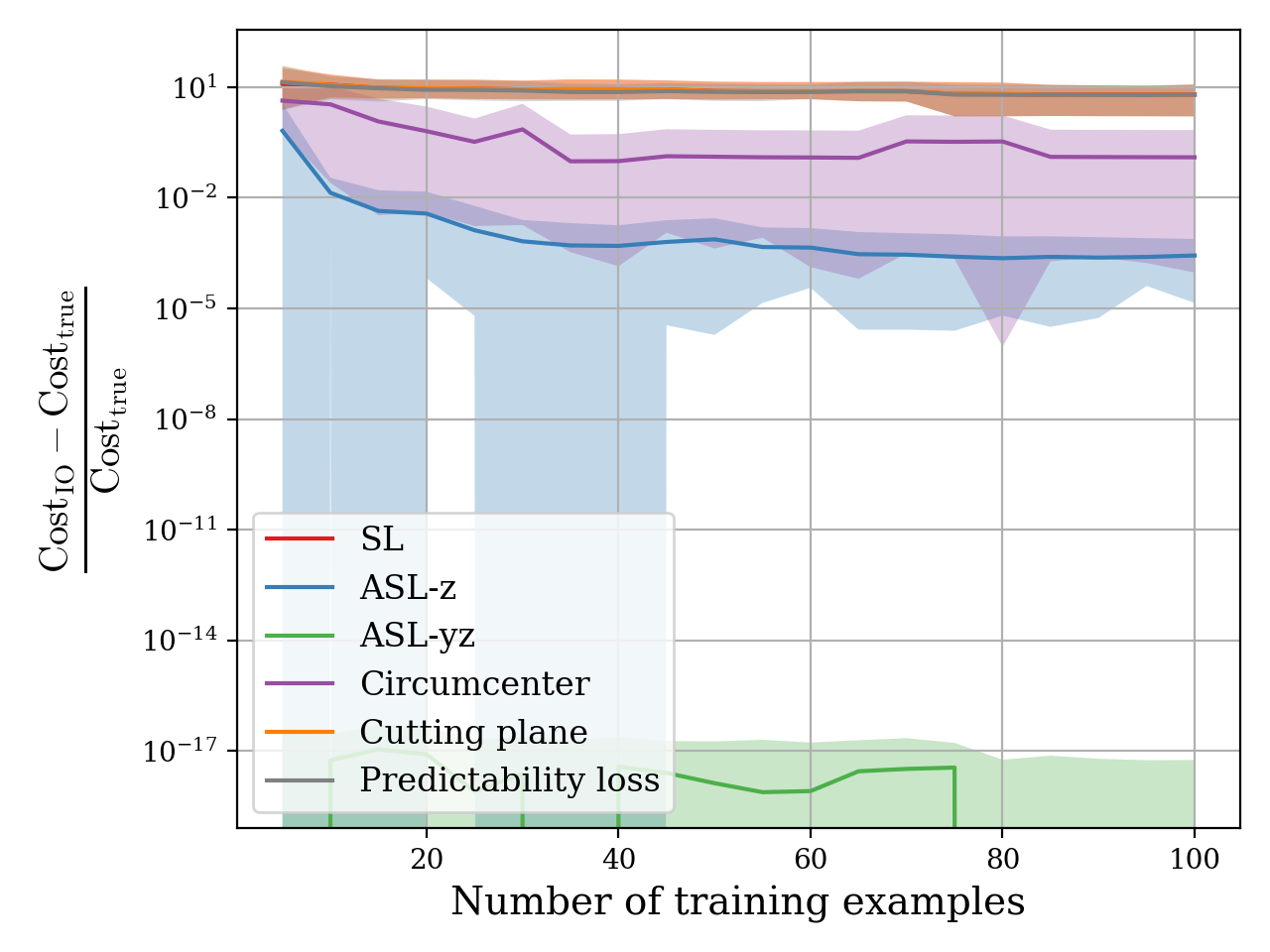}
        \caption{Relative difference between the cost of the decisions generated using $\theta_{\text{true}}$ and $\theta_{\text{IO}}$.}
        \label{fig:MILP_obj_diff_in}
    \end{subfigure}
\caption{In-sample results for the mixed-integer feasible set scenario.}
\label{fig:MILP_in}
\end{figure}

\begin{table}[h]
\centering
\begin{tabular}{c | c c c} 
\textbf{Approach} & ASL-z & ASL-yz & regression+classification \\
\hline
\textbf{$| y - y_\text{true} |$} & 34.27 & 26.06 & 25.03 \\
\hline
\textbf{$| z - z_\text{true} |$} & 24.16\% & 24.16\% & 23.48\%
\end{tabular}
\caption{In-sample average error in months of the TTR/DFST prediction and out-of-sample average percentage error in the prognostic of recurrent vs non-recurrent patients.}
\label{table:in_breast_cancer_results}
\end{table}

\subsection{IO results for SAMD}

Figure \ref{fig:FOM_all_out} shows the performance of the algorithms tested in Section \ref{sec:approximate_subgradients} in terms of the IO performance metrics. The discussion on the interpretation of the results of Figure \ref{fig:FOM_all_out} mirrors the one related to Figure \ref{fig:consistent_out} from the previous section, with the difference that now the x-axis of the figures refers to running time instead of the number of training examples. In these plots, we can see that the improvements in convergence speed of mirror descent updates, stochastic subgradients, and approximate subgradients are also reflected in the performance of the resulting solution for the IO problem.

\begin{figure}
\centering
\captionsetup[subfigure]{width=0.96\linewidth}%
    \begin{subfigure}[t]{0.32\linewidth}
        \includegraphics[width = \linewidth]{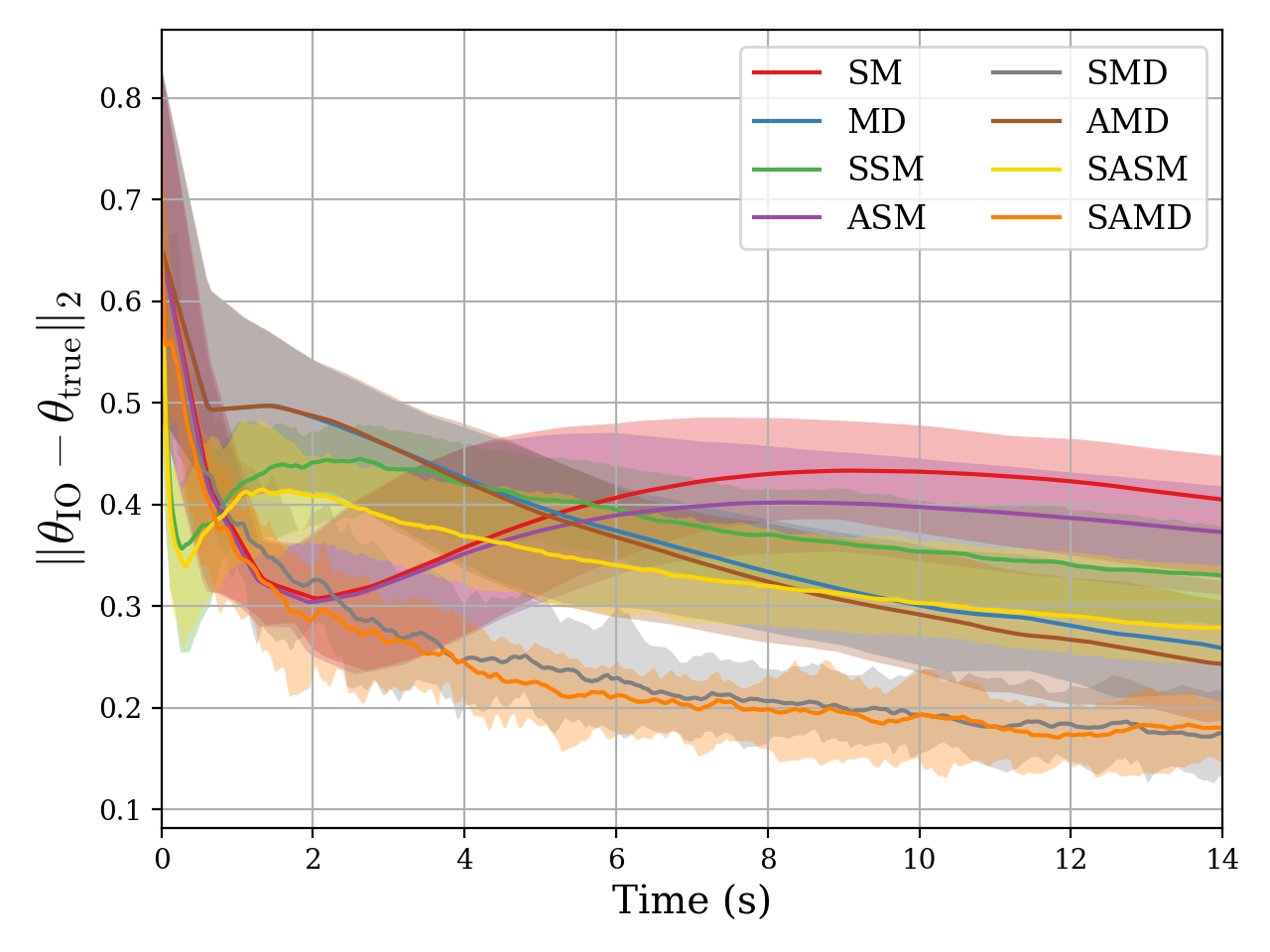}
        \caption{Difference between the true cost vector ($\theta_{\text{true}}$) and the one learned using IO ($\theta_{\text{IO}}$).}
        \label{fig:FOM_all_theta_diff}
    \end{subfigure}
    \begin{subfigure}[t]{0.32\linewidth}
        \includegraphics[width = \linewidth]{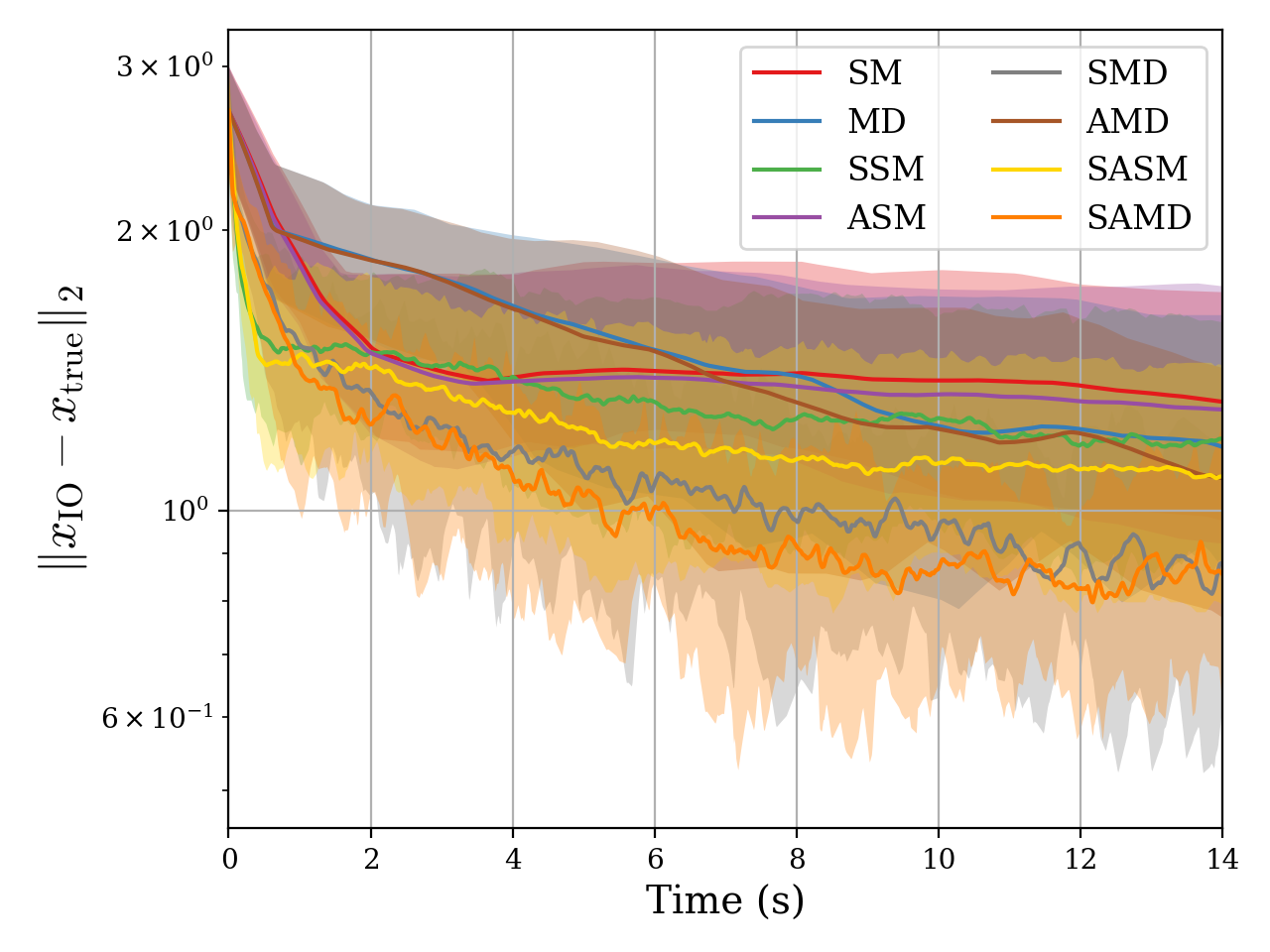}
        \caption{Average error between the decision generated by $\theta_{\text{true}}$ and $\theta_{\text{IO}}$.}
        \label{fig:FOM_all_x_diff_out}
    \end{subfigure}
    \begin{subfigure}[t]{0.32\linewidth}
        \includegraphics[width = \linewidth]{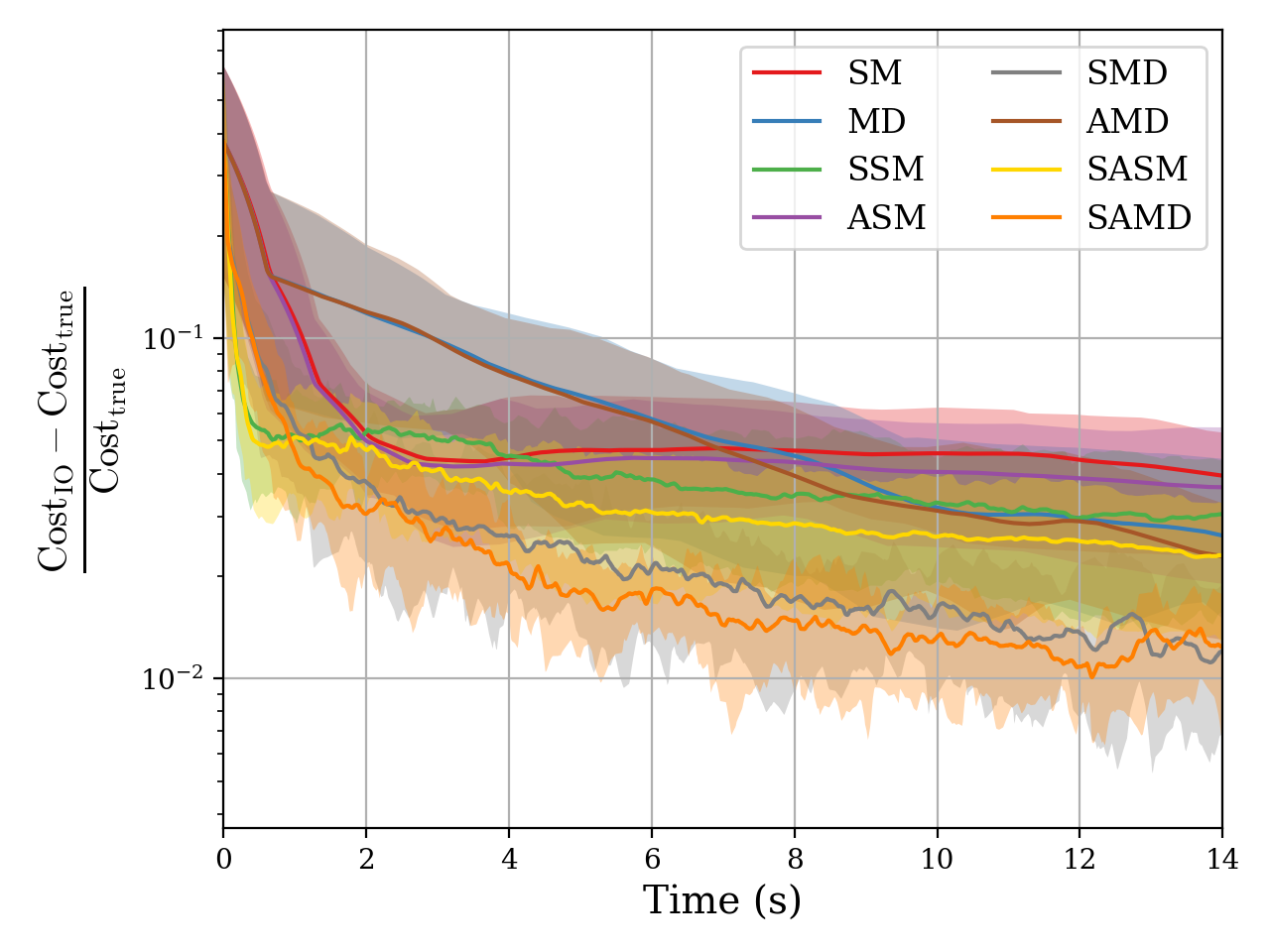}
        \caption{Relative difference between the cost of the decisions generated using $\theta_{\text{true}}$ and $\theta_{\text{IO}}$.}
        \label{fig:FOM_all_obj_diff_out}
    \end{subfigure}
\caption{Out-of-sample results using first-order algorithms.}
\label{fig:FOM_all_out}
\end{figure}

\begin{figure}
\centering
\captionsetup[subfigure]{width=0.96\linewidth}%
    \begin{subfigure}[t]{0.32\linewidth}
        \includegraphics[width = \linewidth]{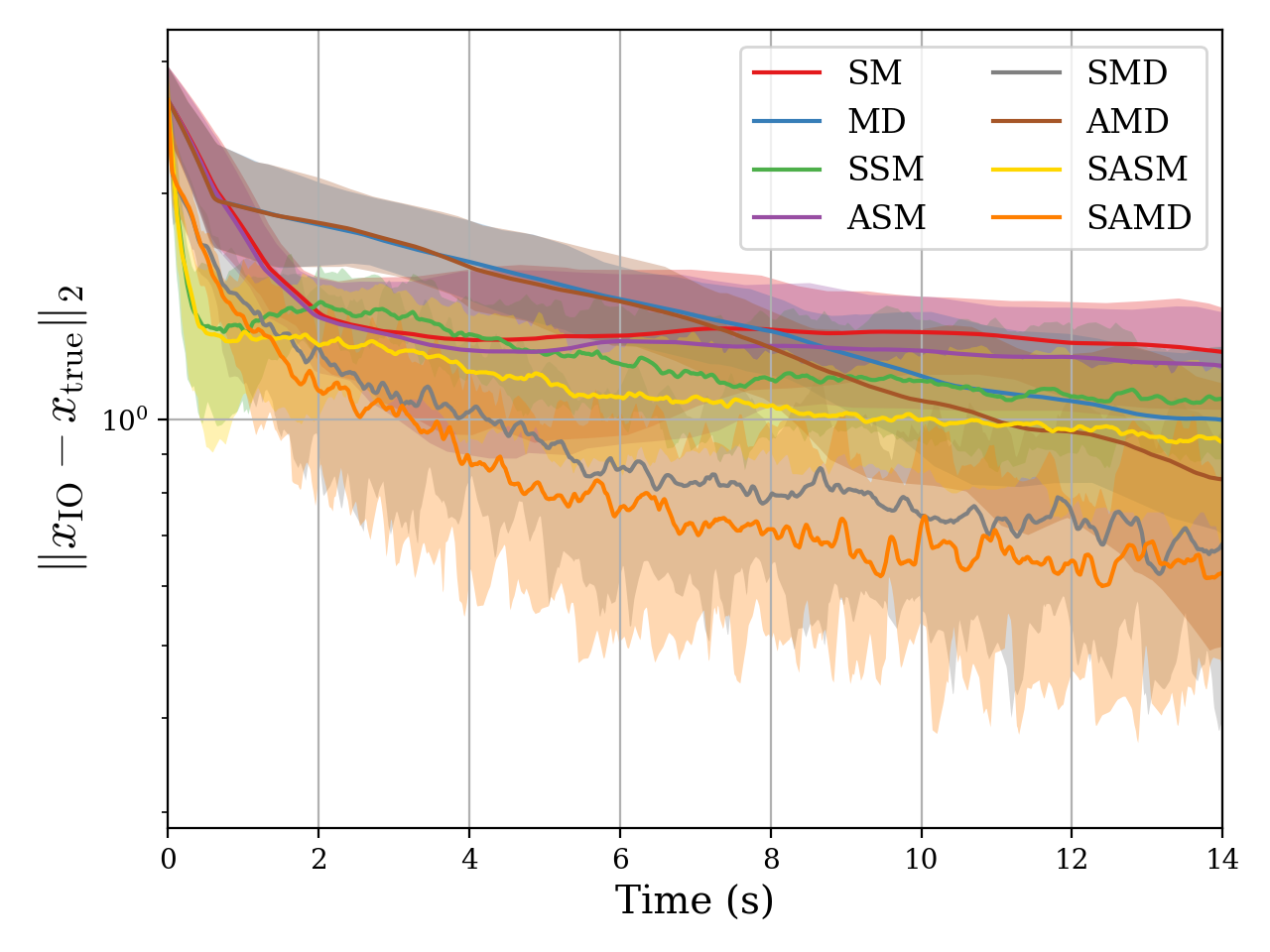}
        \caption{Average error between the decision generated by $\theta_{\text{true}}$ and $\theta_{\text{IO}}$.}
        \label{fig:FOM_all_x_diff_in}
    \end{subfigure}
    \begin{subfigure}[t]{0.32\linewidth}
        \includegraphics[width = \linewidth]{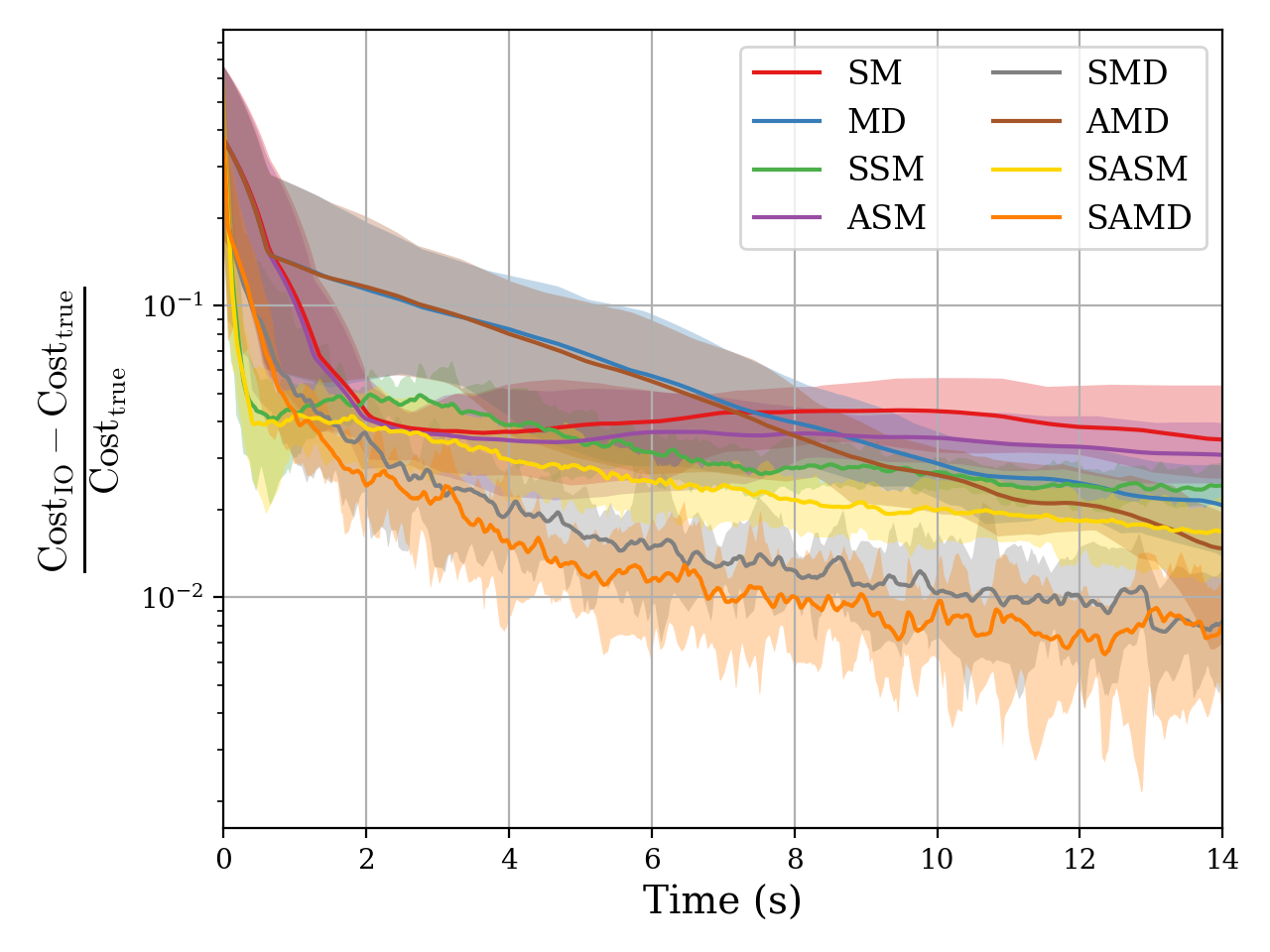}
        \caption{Relative difference between the cost of the decisions generated using $\theta_{\text{true}}$ and $\theta_{\text{IO}}$.}
        \label{fig:FOM_all_obj_diff_in}
    \end{subfigure}
\caption{In-sample results using iterative first-order algorithms.}
\label{fig:FOM_all_in}
\end{figure}

\bibliographystyle{plain}%{apalike}%
\bibliography{references}

\begin{thebibliography}{10}

\bibitem{ahmed2005inverse}
Shabbir Ahmed and Yongpei Guan.
\newblock The inverse optimal value problem.
\newblock {\em Mathematical programming}, 102:91--110, 2005.

\bibitem{ahuja2001inverse}
Ravindra~K Ahuja and James~B Orlin.
\newblock Inverse optimization.
\newblock {\em Operations research}, 49(5):771--783, 2001.

\bibitem{akhtar2021learning}
Syed~Adnan Akhtar, Arman~Sharifi Kolarijani, and Peyman Mohajerin~Esfahani.
\newblock Learning for control: An inverse optimization approach.
\newblock {\em IEEE Control Systems Letters}, 2021.

\bibitem{allen2014linear}
Zeyuan Allen-Zhu and Lorenzo Orecchia.
\newblock Linear coupling: An ultimate unification of gradient and mirror
  descent.
\newblock {\em arXiv preprint arXiv:1407.1537}, 2014.

\bibitem{aswani2018inverse}
Anil Aswani, Zuo-Jun Shen, and Auyon Siddiq.
\newblock Inverse optimization with noisy data.
\newblock {\em Operations Research}, 66(3):870--892, 2018.

\bibitem{barmann2017emulating}
Andreas B{\"a}rmann, Sebastian Pokutta, and Oskar Schneider.
\newblock Emulating the expert: Inverse optimization through online learning.
\newblock In {\em International Conference on Machine Learning}, pages
  400--410. PMLR, 2017.

\bibitem{bertsekas2015convex}
Dimitri Bertsekas.
\newblock {\em Convex optimization algorithms}.
\newblock Athena Scientific, 2015.

\bibitem{bertsekas2008nonlinear}
Dimitri~P. Bertsekas.
\newblock {\em Nonlinear programming}.
\newblock Athena Scientific, 2008.

\bibitem{bertsimas2012inverse}
Dimitris Bertsimas, Vishal Gupta, and Ioannis~Ch Paschalidis.
\newblock Inverse optimization: A new perspective on the black-litterman model.
\newblock {\em Operations research}, 60(6):1389--1403, 2012.

\bibitem{bertsimas2015data}
Dimitris Bertsimas, Vishal Gupta, and Ioannis~Ch Paschalidis.
\newblock Data-driven estimation in equilibrium using inverse optimization.
\newblock {\em Mathematical Programming}, 153(2):595--633, 2015.

\bibitem{besbes2023contextual}
Omar Besbes, Yuri Fonseca, and Ilan Lobel.
\newblock Contextual inverse optimization: Offline and online learning.
\newblock {\em Operations Research}, 2023.

\bibitem{bodur2022inverse}
Merve Bodur, Timothy~CY Chan, and Ian~Yihang Zhu.
\newblock Inverse mixed integer optimization: Polyhedral insights and trust
  region methods.
\newblock {\em INFORMS Journal on Computing}, 34(3):1471--1488, 2022.

\bibitem{boyd2004convex}
Stephen Boyd and Lieven Vandenberghe.
\newblock {\em Convex optimization}.
\newblock Cambridge university press, 2004.

\bibitem{bubeck2015convex}
S.~Bubeck.
\newblock {\em Convex Optimization: Algorithms and Complexity}.
\newblock Foundations and Trends in Machine Learning. Now Publishers, 2015.

\bibitem{burton1992instance}
Didier Burton and Ph~L Toint.
\newblock On an instance of the inverse shortest paths problem.
\newblock {\em Mathematical programming}, 53:45--61, 1992.

\bibitem{chan2014generalized}
Timothy~CY Chan, Tim Craig, Taewoo Lee, and Michael~B Sharpe.
\newblock Generalized inverse multiobjective optimization with application to
  cancer therapy.
\newblock {\em Operations Research}, 62(3):680--695, 2014.

\bibitem{chan2022inverse}
Timothy~CY Chan, Maria Eberg, Katharina Forster, Claire Holloway, Luciano
  Ieraci, Yusuf Shalaby, and Nasrin Yousefi.
\newblock An inverse optimization approach to measuring clinical pathway
  concordance.
\newblock {\em Management Science}, 68(3):1882--1903, 2022.

\bibitem{chan2019inverse}
Timothy~CY Chan, Taewoo Lee, and Daria Terekhov.
\newblock Inverse optimization: Closed-form solutions, geometry, and goodness
  of fit.
\newblock {\em Management Science}, 65(3):1115--1135, 2019.

\bibitem{chan2021inverse}
Timothy~CY Chan, Rafid Mahmood, and Ian~Yihang Zhu.
\newblock Inverse optimization: Theory and applications.
\newblock {\em arXiv preprint arXiv:2109.03920}, 2021.

\bibitem{chen2020online}
Violet~Xinying Chen and Fatma K{\i}l{\i}n{\c{c}}-Karzan.
\newblock Online convex optimization perspective for learning from dynamically
  revealed preferences.
\newblock {\em arXiv preprint arXiv:2008.10460}, 2020.

\bibitem{chow2012inverse}
Joseph~YJ Chow and Will~W Recker.
\newblock Inverse optimization with endogenous arrival time constraints to
  calibrate the household activity pattern problem.
\newblock {\em Transportation Research Part B: Methodological}, 46(3):463--479,
  2012.

\bibitem{conforti2014integer}
M.~Conforti, G.~Cornu{\'e}jols, and G.~Zambelli.
\newblock {\em Integer Programming}.
\newblock Graduate Texts in Mathematics. Springer International Publishing,
  2014.

\bibitem{Dua:2019}
Dheeru Dua and Casey Graff.
\newblock {UCI} machine learning repository, 2017.

\bibitem{el2019generalization}
Othman El~Balghiti, Adam~N Elmachtoub, Paul Grigas, and Ambuj Tewari.
\newblock Generalization bounds in the predict-then-optimize framework.
\newblock {\em Advances in neural information processing systems}, 2019.

\bibitem{elmachtoub2022smart}
Adam~N. Elmachtoub and Paul Grigas.
\newblock Smart “predict, then optimize”.
\newblock {\em Management Science}, 68(1):9--26, 2022.

\bibitem{farago2003inverse}
Andr{\'a}s Farag{\'o}, {\'A}ron Szentesi, and Bal{\'a}zs Szviatovszki.
\newblock Inverse optimization in high-speed networks.
\newblock {\em Discrete Applied Mathematics}, 129(1):83--98, 2003.

\bibitem{ghobadi2018robust}
Kimia Ghobadi, Taewoo Lee, Houra Mahmoudzadeh, and Daria Terekhov.
\newblock Robust inverse optimization.
\newblock {\em Operations Research Letters}, 46(3):339--344, 2018.

\bibitem{ghobadi2021inferring}
Kimia Ghobadi and Houra Mahmoudzadeh.
\newblock Inferring linear feasible regions using inverse optimization.
\newblock {\em European Journal of Operational Research}, 290(3):829--843,
  2021.

\bibitem{heuberger2004inverse}
Clemens Heuberger.
\newblock Inverse combinatorial optimization: A survey on problems, methods,
  and results.
\newblock {\em Journal of combinatorial optimization}, 8:329--361, 2004.

\bibitem{hoffman2010integral}
Alan~J Hoffman and Joseph~B Kruskal.
\newblock Integral boundary points of convex polyhedra.
\newblock {\em 50 Years of Integer Programming 1958-2008: From the Early Years
  to the State-of-the-Art}, pages 49--76, 2010.

\bibitem{iyengar2005inverse}
Garud Iyengar and Wanmo Kang.
\newblock Inverse conic programming with applications.
\newblock {\em Operations Research Letters}, 33(3):319--330, 2005.

\bibitem{joachims2009cutting}
Thorsten Joachims, Thomas Finley, and Chun-Nam~John Yu.
\newblock Cutting-plane training of structural {SVM}s.
\newblock {\em Machine learning}, 77(1):27--59, 2009.

\bibitem{juditsky2011first_i}
Anatoli Juditsky and Arkadi Nemirovski.
\newblock First order methods for nonsmooth convex large-scale optimization, i:
  general purpose methods.
\newblock {\em Optimization for Machine Learning}, 30(9):121--148, 2011.

\bibitem{keshavarz2011imputing}
Arezou Keshavarz, Yang Wang, and Stephen Boyd.
\newblock Imputing a convex objective function.
\newblock In {\em 2011 IEEE International Symposium on Intelligent Control},
  pages 613--619, 2011.

\bibitem{lacoste2012simpler}
Simon Lacoste-Julien, Mark Schmidt, and Francis Bach.
\newblock A simpler approach to obtaining an o (1/t) convergence rate for the
  projected stochastic subgradient method.
\newblock {\em arXiv preprint arXiv:1212.2002}, 2012.

\bibitem{lee2001ssvm}
Yuh-Jye Lee and Olvi~L Mangasarian.
\newblock Ssvm: A smooth support vector machine for classification.
\newblock {\em Computational optimization and Applications}, 20(1):5--22, 2001.

\bibitem{lu2018relatively}
Haihao Lu, Robert~M Freund, and Yurii Nesterov.
\newblock Relatively smooth convex optimization by first-order methods, and
  applications.
\newblock {\em SIAM Journal on Optimization}, 28(1):333--354, 2018.

\bibitem{esfahani2018data}
Peyman Mohajerin~Esfahani, Soroosh Shafieezadeh-Abadeh, Grani~A Hanasusanto,
  and Daniel Kuhn.
\newblock Data-driven inverse optimization with imperfect information.
\newblock {\em Mathematical Programming}, 167(1):191--234, 2018.

\bibitem{more1993generalizations}
Jorge~J Mor{\'e}.
\newblock Generalizations of the trust region problem.
\newblock {\em Optimization methods and Software}, 2(3-4):189--209, 1993.

\bibitem{nemirovsky1996interior}
Arkadi Nemirovski.
\newblock Lecture notes: Interior point polynomial methods in convex
  programming, 1996.
\newblock URL: \url{https://www2.isye.gatech.edu/~nemirovs/Lect_IPM.pdf}. Last
  visited on 2022/06/23.

\bibitem{nemirovski2004prox}
Arkadi Nemirovski.
\newblock Prox-method with rate of convergence {O}(1/t) for variational
  inequalities with lipschitz continuous monotone operators and smooth
  convex-concave saddle point problems.
\newblock {\em SIAM Journal on Optimization}, 15(1):229--251, 2004.

\bibitem{nesterov2007dual}
Yurii Nesterov.
\newblock Dual extrapolation and its applications to solving variational
  inequalities and related problems.
\newblock {\em Mathematical Programming}, 109(2-3):319--344, 2007.

\bibitem{nowozin2011structured}
Sebastian Nowozin, Christoph~H Lampert, et~al.
\newblock Structured learning and prediction in computer vision.
\newblock {\em Foundations and Trends{\textregistered} in Computer Graphics and
  Vision}, 6(3--4):185--365, 2011.

\bibitem{orabona2019modern}
Francesco Orabona.
\newblock A modern introduction to online learning.
\newblock {\em arXiv preprint arXiv:1912.13213}, 2019.

\bibitem{pong2014generalized}
Ting~Kei Pong and Henry Wolkowicz.
\newblock The generalized trust region subproblem.
\newblock {\em Computational optimization and applications}, 58(2):273--322,
  2014.

\bibitem{ratliff2006maximum}
Nathan~D. Ratliff, J.~Andrew Bagnell, and Martin~A. Zinkevich.
\newblock Maximum margin planning.
\newblock In {\em Proceedings of the 23rd international conference on Machine
  learning}, pages 729--736, 2006.

\bibitem{ratliff2007approximate}
Nathan~D Ratliff, J~Andrew Bagnell, and Martin~A Zinkevich.
\newblock (approximate) subgradient methods for structured prediction.
\newblock In {\em Artificial Intelligence and Statistics}, pages 380--387.
  PMLR, 2007.

\bibitem{saez2017short}
Javier Saez-Gallego and Juan~M Morales.
\newblock Short-term forecasting of price-responsive loads using inverse
  optimization.
\newblock {\em IEEE Transactions on Smart Grid}, 9(5):4805--4814, 2017.

\bibitem{sahni1974computationally}
Sartaj Sahni.
\newblock Computationally related problems.
\newblock {\em SIAM Journal on computing}, 3(4):262--279, 1974.

\bibitem{schaefer2009inverse}
Andrew~J Schaefer.
\newblock Inverse integer programming.
\newblock {\em Optimization Letters}, 3:483--489, 2009.

\bibitem{shor1985minimization}
Naum~Zuselevich Shor.
\newblock {\em Minimization methods for non-differentiable functions}.
\newblock Springer Series in Computational Mathematics. Springer, 1985.

\bibitem{street1995inductive}
W~Nick Street, Olvi~L Mangasarian, and William~H Wolberg.
\newblock An inductive learning approach to prognostic prediction.
\newblock In {\em Machine Learning Proceedings 1995}, pages 522--530. Elsevier,
  1995.

\bibitem{taskar2005learning}
Ben Taskar, Vassil Chatalbashev, Daphne Koller, and Carlos Guestrin.
\newblock Learning structured prediction models: A large margin approach.
\newblock In {\em Proceedings of the 22nd international conference on Machine
  learning}, pages 896--903, 2005.

\bibitem{taskar2006structured}
Ben Taskar, Simon Lacoste-Julien, Michael~I. Jordan, Kristin~P. Bennett, and
  Emilio Parrado-Hern{\'a}ndez.
\newblock Structured prediction, dual extragradient and bregman projections.
\newblock {\em Journal of Machine Learning Research}, 7(7), 2006.

\bibitem{tacskesen2022semi}
Bahar Ta{\c{s}}kesen, Soroosh Shafieezadeh-Abadeh, and Daniel Kuhn.
\newblock Semi-discrete optimal transport: Hardness, regularization and
  numerical solution.
\newblock {\em Mathematical Programming}, pages 1--74, 2022.

\bibitem{teo2010bundle}
Choon~Hui Teo, SVN Vishwanathan, Alex Smola, and Quoc~V Le.
\newblock Bundle methods for regularized risk minimization.
\newblock {\em Journal of Machine Learning Research}, 11(1), 2010.

\bibitem{tibshirani1996regression}
Robert Tibshirani.
\newblock Regression shrinkage and selection via the lasso.
\newblock {\em Journal of the Royal Statistical Society: Series B
  (Methodological)}, 58(1):267--288, 1996.

\bibitem{tsochantaridis2005large}
Ioannis Tsochantaridis, Thorsten Joachims, Thomas Hofmann, Yasemin Altun, and
  Yoram Singer.
\newblock Large margin methods for structured and interdependent output
  variables.
\newblock {\em Journal of machine learning research}, 6(9), 2005.

\bibitem{wang2022generalized}
Alex~L Wang and Fatma K{\i}l{\i}n{\c{c}}-Karzan.
\newblock The generalized trust region subproblem: solution complexity and
  convex hull results.
\newblock {\em Mathematical Programming}, 191(2):445--486, 2022.

\bibitem{wang2009cutting}
Lizhi Wang.
\newblock Cutting plane algorithms for the inverse mixed integer linear
  programming problem.
\newblock {\em Operations research letters}, 37(2):114--116, 2009.

\bibitem{zafiropoulos2006support}
Elias Zafiropoulos, Ilias Maglogiannis, and Ioannis Anagnostopoulos.
\newblock A support vector machine approach to breast cancer diagnosis and
  prognosis.
\newblock In {\em IFIP international conference on artificial intelligence
  applications and innovations}, pages 500--507. Springer, 2006.

\bibitem{zattoniscroccaro2023invopt}
Pedro Zattoni~Scroccaro.
\newblock Inv{O}pt: An open-source {P}ython package to solve {I}nverse
  {O}ptimization problems.
\newblock \url{https://github.com/pedroszattoni/invopt}, 2023.

\end{thebibliography}

\end{document}